\definecolor{lavender}{rgb}{0.75,0.58,0.89}
\newtheorem{theorem}{Theorem}[section]
\newtheorem{lemma}[theorem]{Lemma}
\newtheorem{proposition}[theorem]{Proposition}
\newtheorem{corollary}[theorem]{Corollary}
\newtheorem{definition}[theorem]{Definition}
\newtheorem{hypothesis}[theorem]{Hypothesis}
\theoremstyle{definition}
\newcommand{\textcyr}[1]{{\fontencoding{OT2}\fontfamily{wncyr}\fontseries{m}\fontshape{n}\selectfont #1}}
\newcommand{\Sha}{{\mbox{\textcyr{Sh}}}}
\newenvironment{remark}[1][Remark]{\begin{trivlist}
\item[\hskip \labelsep {\bfseries #1}]}{\end{trivlist}}
\begin{document}

\title[$L$-functions of elliptic curves over real ring class fields]{$L$-functions of elliptic curves in ring class extensions of real quadratic fields via regularized theta liftings}

\author{Jeanine Van Order }
\email{vanorder@puc-rio.br}
\address{Departamento de Matem\'atica, Pontif\'icia Universidade Cat\'olica do Rio de Janeiro (PUC-Rio)}
\subjclass{Primary 11F67, 11F27, 11F41, 11G40; Secondary 11F32, 11F46, 11G05, 11G18}

\begin{abstract} We derive new integral presentations for central derivative values of $L$-functions of elliptic curves $E/{\bf{Q}}$ twisted by ring class characters of a real quadratic field $K$ 
in terms of automorphic Green's functions for certain Hirzebruch-Zagier-like arithmetic divisors on the product of modular curves $X_0(N) \times X_0(N)$ along real geodesic cycles. 
We also relate these sums to Birch-Swinnerton-Dyer constants and periods. \end{abstract}

\maketitle

\tableofcontents

\section{Introduction}

Let $E$ be an elliptic curve of conductor $N$ defined over the rational number field ${\bf{Q}}$, 
with corresponding Hasse-Weil $L$-function denoted by $L(E, s)$.
The modularity theorem of Wiles, Taylor-Wiles, and Breuil-Conrad-Diamond-Taylor implies that 
$L(E, s)$ has an analytic continuation $\Lambda(E, s)$ via the Mellin transform 
\begin{align}\label{modularity-id} \Lambda (E, s) &= \Lambda(s-1/2, f) := \int_{0}^{\infty} f \left( \frac{iy}{\sqrt{N}} \right) y^s \frac{dy}{y} 
= N^{\frac{s}{2}} (2 \pi)^{-s} \Gamma(s) L(s-1/2, f) \end{align}
of some weight-two newform 
\begin{align*} f(\tau) = f_E(\tau) = \sum\limits_{n \geq 1} c_f(n) e(n \tau) = 
\sum\limits_{n \geq 1} a_f(n) n^{\frac{1}{2}} e(n \tau) \in S_2^{\operatorname{new}}(\Gamma_0(N)) \end{align*}
with $L$-function corresponding to the Mellin transform (first for $\Re(s) >1$)
\begin{align*} L(s, f) := \sum\limits_{n \geq 1} a_f(n) n^{-s} = \sum\limits_{n \geq 1} c_f(n) n^{-(s + 1/2)}. \end{align*}
Here, we use the unitary normalization for the automorphic $L$-functions $\Lambda(s, f)$, so that $s=1/2$ is the central value. 
We use the arithmetic normalization for the Hasse-Weil $L$-function $\Lambda(E,s)$, so that $s=1$ is the central value.
That is, writing $\pi = \otimes_v \pi_v$ to denote the cuspidal automorphic representation of $\operatorname{GL}_2({\bf{A}})$ associated to $f$,
with $\Lambda(s, \pi) = \prod_{v \leq \infty} L(s, \pi_v)$ its standard $L$-function, we have equivalences of $L$-functions 
\begin{align*} \Lambda(E, s) &= \Lambda(s -1/2, f) = \Lambda(s - 1/2, \pi). \end{align*}

Let $k$ be any number field. The Mordell-Weil theorem implies that the group of $k$-rational points $E(k)$ has the structure
of a finitely generated abelian group $E(k) \cong {\bf{Z}}^{r_E(k)} \oplus E(k)_{\operatorname{tors}}$.
It is a fundamental open problem to characterize the rank $r_E(k) = \operatorname{rk}_{\bf{Z}} E(k)$. 
Writing $L(E/k, s)$ to denote the Hasse-Weil $L$-function of $E/k$, Birch and Swinnerton-Dyer conjectured
that this generating series $L(E/k, s)$, defined a priori only for $\Re(s) > 3/2$, has an analytic continuation $\Lambda(E/k,s)$ to all $s \in {\bf{C}}$, 
with $\Lambda(E/k, s)$ satisfying a functional equation relating values at $s$ to $2-s$ (so that $s=1$ is the central point). 
Taking for granted this preliminary hypothesis\footnote{which remains open in general}, 
the conjecture of Birch and Swinnerton-Dyer predicts that the rank $r_E(k)$ is given by the 
order of vanishing $\operatorname{ord}_{s=1}\Lambda(E/k, s)$ at this central point. Although 
this conjecture has been verified over the past several decades for $r_E(k) \leq 1$ with $k = {\bf{Q}}$ or $k$ an imaginary quadratic field, 
it remains open at large, without a single known example for $r_E(k) \geq 2$. 
The most stunning progress to date has come through the Iwasawa theory of elliptic curves, 
using as a starting point special value formulae for the values $\Lambda^{(r_E(k))}(E/k, 1)$. 
In particular, the celebrated theorem of Gross-Zagier \cite{GZ} 
(with generalizations such as \cite{YZZ} and \cite{BY}) for the central derivative value $\Lambda'(E/k, \chi, 1)$, 
with $\chi$ a class group character of an imaginary quadratic field $k$, has played a major role underlying most of this progress for rank one. 
This work makes essential use of the theory of complex multiplication and explicit class field theory for imaginary quadratic fields,
and especially a construction of points $e_H \in E(k[1])$ dating back to Heegner to relate the central derivative values $\Lambda'(E/k, \chi, 1)$ 
for $\chi$ a character of the class group $\operatorname{Pic}(\mathcal{O}_k) \cong \operatorname{Gal}(k[1]/k)$ (with $k[1]/k$ the Hilbert class field) 
to the regulator term $R_E(k) = [e_H, e_H]$ (with $[\cdot, \cdot]$ the N\'eron-Tate height pairing). 

Here, we return to the largely unexplored setting of $k  = K = {\bf{Q}}(\sqrt{d})$ a real quadratic field of discriminant 
\begin{align*} d_K  &= \begin{cases} d &\text{ if $d \equiv 1 \bmod 4$} \\ 4 d &\text{ if $d \equiv 2, 3 \bmod 4$} \end{cases} \end{align*} 
prime to $N$, and corresponding even Dirichlet character $\eta = \eta_{K/{\bf{Q}}}$. 
Let $\chi$ be any ring class character of $K$ of conductor $c \in {\bf{Z}}_{\geq 1}$ prime to $d_K N$. 
Hence, we view $\chi$ a character of the corresponding ring class group $\operatorname{Pic}(\mathcal{O}_c) \cong \operatorname{Gal}(K[c]/K)$ 
of the ${\bf{Z}}$-order $\mathcal{O}_c := {\bf{Z}} + c \mathcal{O}_K$ of conductor $c$ in $K$, 
\begin{align*} \chi: \operatorname{Pic}(\mathcal{O}_c) 
:= {\bf{A}}_K^{\times} / K_{\infty}^{\times} K^{\times} \widehat{\mathcal{O}}_c^{\times} &\longrightarrow {\bf{S}}^1,
\quad \widehat{\mathcal{O}}_c^{\times} := \prod_{v < \infty} \mathcal{O}_{c, v}^{\times}. \end{align*}
Via $(\ref{modularity-id})$, the theories of Rankin-Selberg convolution and quadratic basechange imply that the Hasse-Weil $L$-function
$L(E/K, \chi, s)$ has an analytic continuation $\Lambda(E/K, \chi, s)$ to all $s \in {\bf{C}}$ via a functional equation relating values at $s$ to $2-s$.
Writing $\pi(\chi)$ to denote the automorphic representation of $\operatorname{GL}_2({\bf{A}})$ of level $d_K c^2$ and character $\eta$ 
induced from the ring class character $\chi$, this completed $L$-function $\Lambda(E/K, \chi, s)$ is 
equivalent to the corresponding shifted $\operatorname{GL}_2({\bf{A}}) \times \operatorname{GL}_2({\bf{A}})$ Rankin-Selberg $L$-function 
$\Lambda(s-1/2, \pi \times \pi(\chi))$. Writing $\Pi = \operatorname{BC}_{K/{\bf{Q}}}(\pi)$ to denote the quadratic basechange lifting of $\pi$ to a 
cuspidal automorphic representation of $\operatorname{GL}_2({\bf{A}}_K)$, the $L$-function $\Lambda(E/K, \chi, s)$ is also equivalent
to the shifted $\operatorname{GL}_2({\bf{A}}_K) \times \operatorname{GL}_1({\bf{A}}_K)$ automorphic $L$-function $\Lambda(s-1/2, \Pi \otimes \chi)$.  
Hence, we see the analytic continuation through the equivalent presentations 
\begin{align*} \Lambda(E/K, \chi, s) &= \Lambda(s-1/2, \pi \times \pi(\chi)) = \Lambda(s-1/2, \Pi \otimes \chi). \end{align*}
As explained in $(\ref{symmFE})$ below, each $\Lambda(E/K, \chi, s)$ satisfies a symmetric functional equation.
This gives the following immediate consequence, whose proof we explain in the discussion leading to Hypothesis \ref{EHH} below:

\begin{lemma}\label{key} 

Let $E$ be an elliptic curve of conductor $N$ defined over ${\bf{Q}}$, and $\pi = \pi(f)$ the cuspidal automorphic representation of
$\operatorname{GL}_2({\bf{A}})$ associated to the eigenform $f \in S_2^{\operatorname{new}}(\Gamma_0(N))$ parametrizing $E$.
Let $K$ be a real quadratic field of discriminant $d_K$ prime to $N$, with $\eta (\cdot) = \eta_K (\cdot) = \left( \frac{d_K}{\cdot} \right)$
the corresponding Dirichlet character. Hence, we can write $N = N^+ N^-$ for $N^+$ the product of prime divisors 
$q \mid N$ which split in $K$, and $N^-$ the product of prime divisors $q \mid N$ which remain inert in $K$, and $\eta(-N) = \eta(N) = \eta(N^-)$.
If $N^-$ is the squarefree product of an odd number of primes, then we have the vanishing of the central value
\begin{align*} \Lambda(E/K, \chi, 1) = \Lambda(1/2, \pi \times \pi(\chi)) = \Lambda(1/2, \Pi \otimes \chi) = 0 \end{align*}
for any ring class character $\chi$ of $K$ of conductor $c$ prime to $d_K N$. \end{lemma}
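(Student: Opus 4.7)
The plan is to deduce the vanishing from a sign calculation in the symmetric functional equation. Since $\Lambda(E/K, \chi, s) = \Lambda(s-1/2, \pi \times \pi(\chi)) = \Lambda(s-1/2, \Pi \otimes \chi)$ satisfies a functional equation relating $s$ to $2-s$ (equivalently, the Rankin-Selberg / automorphic $L$-function satisfies one relating $s$ to $1-s$ around the central point $s=1/2$), and the root number is $\pm 1$, it suffices to show that the global root number $\varepsilon := \varepsilon(1/2, \pi \times \pi(\chi))$ equals $-1$ whenever $N^-$ is a squarefree product of an odd number of primes.

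First I would write $\varepsilon = \prod_{v} \varepsilon_v(1/2, \Pi_v \otimes \chi_v)$ as a product of local root numbers over the places of $K$, and then compute each factor in turn. At the two real archimedean places of $K$, $\Pi_v$ is the weight-two discrete series and $\chi_v$ is trivial (since $\chi$ is a ring class character), so each local sign equals $-1$ and the two contributions multiply to $+1$; this is where the real quadratic case differs in parity from the imaginary quadratic setup of Gross-Zagier. At places $v$ coprime to $Nd_Kc$ everything is unramified and $\varepsilon_v = 1$. At places of $K$ dividing $d_K c$ but coprime to $N$, the ramification of $\Pi \otimes \chi$ comes entirely from the basechange of the quadratic character $\eta$ and from $\chi$ itself, and a standard dihedral calculation shows the contributions multiply to $+1$.

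The essential contribution comes from places above $N$. For each $p \mid N^+$, which splits in $K$ as $\mathfrak{p}\overline{\mathfrak{p}}$, the local epsilon factors at $\mathfrak{p}$ and $\overline{\mathfrak{p}}$ collapse to a Galois-symmetric pair equal to the Rankin-Selberg $\varepsilon$-factor squared on the ${\bf{Q}}$-side, contributing $+1$. For each $p \mid N^-$, which is inert in $K$ with unique prime $\mathfrak{p} = p \mathcal{O}_K$, the squarefreeness of $N^-$ forces $\pi_p$ to be Steinberg up to an unramified quadratic twist, its basechange $\Pi_\mathfrak{p}$ remains Steinberg on the unramified quadratic extension $K_\mathfrak{p}/{\bf{Q}}_p$, and $\chi_\mathfrak{p}(\mathfrak{p}) = 1$ since $\mathfrak{p}$ is principal as an ideal of $\mathcal{O}_c$. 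A direct epsilon-factor computation for Steinberg twisted by an unramified character then yields $\varepsilon_\mathfrak{p} = -1$ at each such place.

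Assembling, $\varepsilon = (-1)^{\#\{p \mid N^-\}}$, which equals $-1$ precisely when $N^-$ has an odd number of prime factors, so the functional equation forces $\Lambda(1/2, \pi \times \pi(\chi)) = 0$. The main obstacle in executing this plan cleanly is the local computation at primes dividing $d_K$ or $c$: one must track how the ramification of $\eta$ interacts with the basechange and with the ring class character through local Langlands (or the Tunnell--Saito local epsilon dichotomy), and verify that these contributions genuinely cancel, so that the sign is determined by $N^-$ alone and the claim reduces to the parity of $\#\{p \mid N^-\}$.
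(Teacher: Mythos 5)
Your proof is correct and follows the same basic strategy as the paper: the statement reduces to showing that the global root number $\epsilon(1/2,\pi\times\pi(\chi))$ equals $-1$ whenever $N^-$ is a squarefree product of an odd number of primes, and then invoking the symmetric functional equation $(\ref{symmFE})$. The paper, however, is much terser: it simply cites the known formula $\epsilon(1/2,\pi\times\pi(\chi)) = \eta(-N) = \eta(N)$ from $(\ref{RNF})$, notes that $\eta$ is even because $K$ is real, and observes that $\eta(N) = \eta(N^+)\eta(N^-) = \eta(N^-) = (-1)^{\#\{q\mid N^-\}}$. What you have done is essentially unwind that citation into a place-by-place computation of local epsilon factors for $\Pi\otimes\chi$ over $K$: the two real places each contribute $-1$ (cancelling in pairs, which is precisely the parity difference from the imaginary quadratic case, and is the local content of $\eta(-1)=+1$), the split primes and the primes dividing $d_K c$ cancel, and the inert primes of $N^-$ each contribute $-1$. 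This buys you a self-contained derivation of the root-number formula, at the cost of having to carefully handle the dihedral/inductivity bookkeeping at primes dividing $d_K$ and $c$, which you correctly flag as the delicate point; the paper sidesteps it by citing $(\ref{RNF})$ as a standard fact. Your identification of $\chi_{\mathfrak p}(\mathfrak p)=1$ at inert $\mathfrak{p}=p\mathcal{O}_K$ because $p\in{\bf A}^\times$ and ring class characters are trivial on ${\bf A}^\times$ is the right observation, and your Steinberg-basechange step at inert primes is sound. If you were to write this up fully, the cancellation at places above $d_K$ and $c$ should be justified via the inductivity of epsilon factors (or equivalently the Fr\"ohlich--Queyrut / Tunnell--Saito local sign formulas), after which your argument becomes a proof of $(\ref{RNF})$ rather than an alternative to it.
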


\noindent \textbf{Informal description of the main result.} In this forced vanishing setting, we show that the central derivative
$\Lambda'(E/K,\chi,1) = \Lambda'(1/2, \pi \times \pi(\chi)) = \Lambda'(1/2, \Pi \otimes \chi)$ can be expressed in terms of a $\chi$-twisted sum of
certain regularized theta lifts evaluated along real geodesic cycles on $Y_0(N)\times Y_0(N)$ attached to the class group of $K$.
These regularized theta lifts are automorphic Green's functions for Hirzebruch-Zagier divisors on the product of modular curves $Y_0(N) \times Y_0(N)$,
and these geodesic cycles play the role of real quadratic analogues of the CM cycles appearing in the Gross-Zagier formula. 
A comparison with Euler characteristic calculations then gives us new insights into the refined conjecture of Birch and Swinnerton-Dyer for $E(K)$.

\subsection{Description of the main result}

In the setup of forced vanishing described for Lemma \ref{key}, we derive novel integral presentations for  
$\Lambda^{\prime}(E/K, \chi, 1) = \Lambda'(1/2, \pi \times \pi(\chi)) = \Lambda'(1/2, \Pi \otimes \chi)$, given in terms of sums
of automorphic Green's functions of arithmetic Hirzebruch-Zagier divisors on $Y_0(N) \times Y_0(N)$ evaluated 
along geodesic cycles associated with ring class groups of $K$.
To do this, we adapt and develop the calculation of Bruinier-Yang \cite[Theorem 4.7]{BY}, 
related to their distinct proof of the Gross-Zagier formula \cite[$\S 7$]{BY}, cf.~\cite{GZ} and \cite{YZZ}. 
This allows us to show an analogue of the Gross-Zagier formula for real quadratic fields.
While there is no known global analogue of the Heegner point construction in this setting, 
we present some depiction of the provenance of potential points $E(K[c])$ via 
Euler characteristic calculations used to verify special cases of the refined conjecture of Birch and Swinnerton-Dyer.

Fix a primitive ring class character $\chi$ of $K$ of conductor $c$ prime to $d_KN$ (which we shall assume exists).
For each class $A \in \operatorname{Pic}(\mathcal{O}_c)$, we fix an integral representative
$\mathfrak{a} \subset \mathcal{O}_K$ so that $A = [\mathfrak{a}] \in \operatorname{Pic}(\mathcal{O}_c)$,
and write $Q_{\mathfrak{a}}(z) := {\bf{N}}_{K/{\bf{Q}}}(z)/{\bf{N}} \mathfrak{a}$ to denote the corresponding norm form of signature $(1,1)$.
Here, ${\bf{N}}_{K/{\bf{Q}}}(z) = z z^{\tau}$ denotes the norm homomorphism, with $\tau \in \operatorname{Gal}(K/{\bf{Q}})$ the nontrivial automorphism. 
We consider the quadratic space $(V_A, q_A)$ of signature $(2,2)$ defined by
\begin{align*}V_A &= \mathfrak{a}_{\bf{Q}} \oplus \mathfrak{a}_{\bf{Q}}, \quad
Q_A(z) = Q_A((z_1, z_2)) := Q_{\mathfrak{a}}(z_1) - Q_{\mathfrak{a}}(z_2). \end{align*} 
We consider the corresponding general spin group $\operatorname{GSpin}(V_A)$. We have an exceptional isomorphism 
\begin{align*} \operatorname{GSpin}(V_A) \cong \operatorname{GL}_2 \times_{{\bf{G}}_m} \operatorname{GL}_2
= \left\lbrace (g_1, g_2) \in \operatorname{GL}_2 \times \operatorname{GL}_2: \det(g_1) = \det(g_2) \right\rbrace \end{align*}
of algebraic groups over ${\bf{Q}}$ (see Proposition \ref{spinID}). Consider the Grassmannian 
\begin{align*} D(V_A) &= \left\lbrace z \subset V_A({\bf{R}}): \operatorname{dim}(z)=2, Q_A\vert_z <0 \right\rbrace \end{align*}
of oriented negative $2$-planes in $V_A(\bf{R})$. 
Note that $D(V_A)$ has two connected components $D^{\pm}(V_A)$ corresponding to the choice of orientation.
We shall fix one of these $D^{\pm}(V_A) \cong \mathfrak{H}^{\pm} = \mathfrak{H}^+ \coprod \mathfrak{H}^{-}$ throughout. 
For any compact open subgroup $U_A \subset \operatorname{GSpin}(V_A)({\bf{A}}_f)$, we then
consider the corresponding spin Shimura variety $X_A = \operatorname{Sh}(D(V_A), \operatorname{GSpin}(V_A))$ with complex points 
\begin{align*} X_A({\bf{C}}) &= \operatorname{GSpin}(V_A)({\bf{Q}}) \backslash \left( D(V_A) \times \operatorname{GSpin}(V_A) ( {\bf{A}}_f ) / U_A \right). \end{align*}
This $X_A$ determines a non-compact surface defined over {\bf{Q}}. 
Via the exceptional isomorphism $\operatorname{GSpin}(V_A) \cong  \operatorname{GL}_2 \times_{{\bf{G}}_m} \operatorname{GL}_2$, 
we can take $U_A$ to be the compact open subgroup of $\operatorname{GSpin}(V_A)({\bf{A}}_f)$ 
corresponding to the two-fold product of congruence subgroup $\Gamma_0(N)$ (see $(\ref{level})$), and we have 
\begin{align*} X_A({\bf{C}}) &\cong Y_0(N) \times Y_0(N). \end{align*}
The $X_A \cong Y_0(N)^2$ come equipped with arithmetic divisors. To describe them, define for each $m \in {\bf{Q}}_{>0}$  
\begin{align*} \Omega_{m, A}({\bf{Q}}) = \left\lbrace x \in V_A: Q_A(x) =m \right\rbrace. \end{align*}
Consider the natural projection 
$\operatorname{pr}: D(V_A) \times \operatorname{GSpin}(V_A)({\bf{A}}_f) \rightarrow X_A$.
Given $x \in V_A({\bf{R}})$, we have the orthogonal projection 
$D(V_A)_x = \left\lbrace z \in D(V_A): z \perp x \right\rbrace$.
Let $L_A \subset V_A$ denote the integral lattice stabilized by the compact open subgroup 
$U_A \subset \operatorname{GSpin}(V_A)$, with $L_A^{\vee}$ its dual lattice,
and $L_A^{\vee}/L_A$ the corresponding discriminant group.
We define for each coset $\mu \in L_A^{\vee}/L_A$ the divisor 
\begin{align*} Z_A(\mu, m) &= \sum\limits_{x \in \left( \operatorname{GSpin}(V_A)({\bf{Q}}) \cap U_A \right) 
\backslash \Omega_{A, m}({\bf{Q}})} {\bf{1}}_{\mu}(x) \operatorname{pr}(D(V_A)_x). \end{align*} 
Sums over cosets $\mu \in L_A^{\vee}/L_A$ of these special divisors can be related to classical Hirzebruch-Zagier divisors. 
As we explain below, these divisors are arithmetic in the sense of Arakelov theory -- they come equipped with 
explicit Green's functions $\Phi = G_{Z_A(\mu, m)}$. We evaluate these functions along the following real geodesic cycles. 
Let $V_{A, 2} \subset V_A$ denote the anisotropic subspace of signature 
$(1, 1)$ given by the integer ideal $\mathfrak{a}$ with its norm form:
\begin{align*} (V_{A, 2}, Q_{A, 2}), \quad V_{A, 2} := \mathfrak{a}_{\bf{Q}} = \mathfrak{a} \otimes {\bf{Q}}, 
\quad Q_{A,2}(\lambda) = Q_{\mathfrak{a}}(\lambda) = \frac{ {\bf{N}}(\lambda) }{ {\bf{N}} \mathfrak{a} } = \frac{\lambda \lambda^{\tau}}{ {\bf{N}} \mathfrak{a}  }. \end{align*}
Each such subspace $(V_{A, 2}, Q_{A, 2})$ gives rise to a set of oriented geodesic lines 
\begin{align*} D(V_{A, 2}) = \left\lbrace z \in V_{A, 2}({\bf{R}}) : \dim(z) = 1,~~~ Q_{A,2} \vert_z <0 \right\rbrace \end{align*}
Here, we have two connected components $D^{\pm}(V_{A, 2})$ corresponding to the orientation of a line $z$ in 
$V_{A, 2}({\bf{R}}) = \mathfrak{a}_{\bf{Q}} \otimes {\bf{R}} $. 
Each component $D^{\pm}(V_{A, 2})$ determines an open subset of 
real projective space \footnote{In the projective model $D(V_A) \cong \lbrace [x] \in {\bf{P}}(V_{A,2}({\bf{R}})) : Q_{A,2}(x) <0 \rbrace $, 
a pair of boundary isotropic lines $z^{\pm} \subset V_{A,2}({\bf{R}})$ determines a projective line ${\bf{P}}(\lbrace z^+, z^- \rbrace)$,
where $\lbrace z^+, z^- \rbrace$ denotes the span of $z^+$ and $z^-$. The geodesic defined by the intersection of 
${\bf{P}}(\lbrace z^+, z^- \rbrace)$ with $D(V_{A,2}) \cong \lbrace [x] \in {\bf{P}}(V_{A,2}({\bf{R}})) : Q_{A,2}(x) <0 \rbrace$
is a one-dimensional real manifold, so a real curve of dimension one. In the setting we consider, this corresponds to a real 
geodesic in the upper-half plane $\mathfrak{H}$, embedded into $X_A$ via a modular curve. 
Via the identifications $\operatorname{GSpin}(V_A) \cong  \operatorname{GL}_2 \times_{{\bf{G}}_m} \operatorname{GL}_2$ and 
$X_A \cong Y_0(N) \times Y_0(N)$, each pair of lines $z^{\pm} $ determines 
a real geodesic on $\mathfrak{H}$, and hence on any modular curve $C$. In this way,
through embeddings of modular curves into the surface $Y_0(N) \times Y_0(N)$ (e.g.~via Hirzebruch-Zagier divisors), 
we can realize these geodesics on $X_A \cong Y_0(N) \times Y_0(N)$.} of dimension one with a fixed orientation,
\begin{align*} D^{\pm}(V_{A, 2}) &= \left\lbrace z^{\pm} = [x:y] \in {\bf{P}}^1({\bf{R}}), ~\text{orientation $\pm$}: Q_{A, 2}(x, y) < 0 \right\rbrace. \end{align*}
We consider for each class $A \in \operatorname{Pic}(\mathcal{O}_c)$ the finite ``geodesic set''
\begin{align*} \mathfrak{G}(V_{A, 2}) &= \operatorname{GSpin}(V_{A, 2})({\bf{Q}}) \backslash
\operatorname{GSpin}(V_{A, 2})({\bf{A}}_f) / U_{A,2}, \quad U_{A,2} :=U_A \cap \operatorname{GSpin}(V_{A, 2})({\bf{A}}_f). \end{align*}
Fixing a representative $h \in \operatorname{GSpin}(V_{A,2})({\bf{A}}_f)$ for each class $[h] \in \mathfrak{G}(V_{A,2})$, 
and writing $\operatorname{GSpin}(V_{A, 2})({\bf{R}})^0$ for the connected component of the identity in $\operatorname{GSpin}(V_{A, 2})({\bf{R}})$, we consider the symmetric space defined by
\begin{align*} C_{A, h} &= \Gamma_{A, h} \backslash D^{\pm}(V_{A,2}), 
\quad \Gamma_{A, h} := \operatorname{GSpin}(V_{A,2})({\bf{Q}}) \cap \operatorname{GSpin}(V_{A, 2})({\bf{R}})^0 h U_{A,2} h^{-1}. \end{align*}
We consider integrals 
\begin{align*} \int\limits_{C_{A,h}} \Phi(z, h) d \nu (z) \end{align*}
over this symmetric space $C_{A,h}$, where $d \nu$ denotes the $\operatorname{O}(1,1)$-invariant length measure, and the sums
\begin{align*} \Phi(\mathcal{G}_A) &:= \sum\limits_{ h \in \mathfrak{G}(V_{A,2}) \atop h \in \operatorname{GSpin}(V_{A,2}({\bf{A}}_f))} \frac{1}{\# \operatorname{Aut}(h)} 
\int\limits_{C_{A, h}} G(z, h) d \nu (z), \end{align*}
over the corresponding real geodesic cycles
\begin{align}\label{G_A} \mathcal{G}_A &:= \coprod_{[h] \in \mathfrak{G}(V_{A,2}) \atop h \in \operatorname{GSpin}(V_{A,2})({\bf{A}}_f)} C_{A,h}
= \coprod_{[h] \in \mathfrak{G}(V_{A,2}) \atop h \in \operatorname{GSpin}(V_{A,2})({\bf{A}}_f)} \Gamma_{A, h} \backslash D^{\pm}(V_{A,2}). \end{align}

Let $\omega_{L_A}$ denote the Weil representation of $\operatorname{SL}_2({\bf{A}})$ 
on the space of Schwartz functions $\mathcal{S}(V \otimes {\bf{A}})$ determined by the quadratic module $(L_A, Q_A)$. 
Let $H_l(\omega_{L_A})$ denote the space of harmonic weak Maass forms of weight $l \in \frac{1}{2}{\bf{Z}}$ and representation $\omega_{L_A}$ in the sense of 
Bruinier-Funke \cite{BF}. Each $f_l \in H_l(\omega_{L_A})$ decomposes as a sum $f_l(\tau) = f_l^+(\tau) + f_l^-(\tau)$ of a holomorphic part 
\begin{align*} f_{l}^+(\tau) &= \sum\limits_{ \mu \in L_A^{\vee}/L_A} f^+_{l, \mu} (\tau) {\bf{1}}_{\mu} =  \sum\limits_{ \mu \in L_A^{\vee}/L_A} 
\left( \sum\limits_{m \in {\bf{Q}} \atop m \gg - \infty} c^{+}_{f_{l}}(\mu, m) e(m \tau)\right)  {\bf{1}}_{\mu}, \quad {\bf{1}}_{\mu} = \operatorname{char}(\mu + L_A). \end{align*} 
and a nonholomorphic part $f_l^-(\tau)$, and fits into the exact sequence
\begin{align*} 0 \to M_l^!(\omega_{L_A}) \to H_l(\omega_{L_A}) \overset{\xi_l}{\longrightarrow} S_{2-l}(\overline{\omega}_{L_A}) \to 0, \end{align*}
where $\xi_l$ is the Bruinier-Funke differential operator
\begin{align*} \xi_l: H_l(\omega_L) \longrightarrow S_{2-l}(\overline{\omega}_L), \quad f(\tau) \longmapsto v^{l-2} \overline{L_l f(\tau)}, 
\quad L_l := - 2 i v^2 \cdot \frac{\partial }{ \partial \overline{\tau}}. \end{align*}
Here, $M^!_{l}(\omega_{L_A}) \subset H_{l}(\omega_{L_A})$ denotes the subspace of weakly holomorphic forms, 
with $M_{l}(\omega_{L_A}) \subset M^!_{l}(\omega_{L_A}) $ the subspace of holomorphic forms, 
and $S_{l}(\omega_{L_A}) \subset M_{l}(\omega_{L_A})$ the subspace of holomorphic cuspidal forms; $L_l$ denotes the standard Maass weight-lowering operator. 
We have a natural inner product 
\begin{align*} \langle \langle f, g \rangle \rangle &= \sum\limits_{  \mu \in L_A^{\vee}/L_A } f_{\mu}(\tau) g_{\mu}(\tau) \end{align*} 
defined on vector-valued forms
\begin{align*} f(\tau) &= \sum\limits_{  \mu \in L_A^{\vee}/L_A } f_{\mu}(\tau) {\bf{1}}_{\mu} \in A_{l}(\omega_{L_A})
\quad \text{~~~and~~~} \quad g(\tau) \sum\limits_{  \mu \in L_A^{\vee}/L_A } g_{\mu}(\tau) {\bf{1}}_{\mu} \in A_{-l}(\overline{\omega}_{L_A}).\end{align*} 
Here, we write $\overline{\omega}_{L_A}$ to denote the Weil representation of the quadratic module $(L_A, -Q_A)$. 
Writing \begin{align*} \mathcal{F} = \lbrace \tau = u + iv \in \mathfrak{H}: \vert u \vert \leq 1/2, u^2 + v^2 \geq 1 \rbrace \end{align*} to denote the standard 
fundamental domain for $\operatorname{SL}_2({\bf{Z}})$ acting on $\mathfrak{H}$ by fractional linear transformation, 
we define the corresponding Petersson inner product (when it converges) by 
\begin{align*} \langle f, g \rangle &= \int\limits_{\mathcal{F}} \langle \langle f(\tau), \overline{g(\tau)} \rangle \rangle v^l d \mu(\tau), 
\quad d \mu(\tau) = \frac{du dv}{v^2}. \end{align*}

Let $\theta_{L_A}(\tau, z, h)$ denote the Siegel theta series defined on $\tau \in \mathfrak{H}$, $z \in D(V_A)$, and $h \in \operatorname{GSpin}(V_A)({\bf{A}}_f)$.
As a function in $\tau$, this is a nonholomorphic form of weight $0$ and contragredient Weil representation $\overline{\omega}_{L_A}$.
Given $f_0 \in H_{0}(\omega_{L_A})$, we consider the corresponding regularized theta lift 
\begin{align*} \Phi(f_0, z, h)  &= \int_{\operatorname{SL}_2({\bf{Z}}) \backslash \mathfrak{H}}^{\star} \langle \langle f_0(\tau), \theta_{L_A} (\tau, z, h) \rangle \rangle \frac{du dv}{v^2}\\
&:= \operatorname{CT}_{s=0}  \left(  \lim_{T \rightarrow \infty} \int_{\mathcal{F}_T} \langle \langle f_0(\tau), \theta_{L_A} (\tau, z, h) \rangle \rangle v^{-s} \frac{du dv}{v^2} \right) \end{align*} 
given by the constant term in the Laurent series expansion around $s=0$ of 
\begin{align*} \lim_{T \rightarrow \infty} \int_{\mathcal{F}_T} \langle \langle f_0(\tau), \theta_{L_A} (\tau, z, h) \rangle \rangle v^{-s} \frac{du dv}{v^2},
\quad \mathcal{F}_T = \left\lbrace \tau = u + iv \in \mathcal{F}: v \leq T \right\rbrace. \end{align*}
A theorem of Bruinier \cite{BrB} extending Borcherds \cite{Bo} allows us to view these regularized theta lifts
$\Phi(f_0, \cdot)$ as automorphic Green's functions in the sense of Arakelov theory. 
To be more precise, if the Fourier coefficients $c_{f_0}^+(\mu, m)$ of the holomorphic part $f_0^+$ of $f_0$ are integers, we define the divisor
\begin{align}\label{Z_A} Z_A(f_{0}) &= \sum\limits_{ \mu \in L_A^{\vee} / L_A } \sum\limits_{m \in {\bf{Q}} \atop m >0} c_{f_{0}}^+(\mu, -m) Z_A(\mu, m). \end{align}
The regularized theta lift $\Phi(f_0, \cdot)$ is the automorphic Green's function $G_{Z_A(f_0)}$ for this divisor 
$Z_A(f_0) \subset X_A$ (Theorem \ref{Green}), giving an arithmetic divisor $\widehat{Z}_A(f_0) = (Z_A(f_0), G_{Z_A(f_0)})$.

For each class $A \in \operatorname{Pic}(\mathcal{O}_c)$, we take $f_{0, A} \in H_{0}(\omega_{L_A})$ to be the harmonic weak Maass 
whose image $g = g_{f, A} = \xi_0(f_{0, A}) \in S_{2}(\overline{\omega}_{L_A})$ 
under the operator $\xi_0 : H_{0}(\omega_{L_A}) \rightarrow S_{2}(\overline{\omega}_{L_A})$ 
has a canonical lift as described in Theorem \ref{YZhang} to the scalar-valued eigenform $f \in S_2^{\operatorname{new}}(\Gamma_0(N))$. 
Each of the vector-valued cusp forms  $g_{f, A}$ has a Fourier series expansion given explicitly in terms of the 
Fourier coefficients of the eigenform $f \in S_2^{\operatorname{new}}(\Gamma_0(N))$. 
That is, we have for each class $A = [\mathfrak{a}] \in \operatorname{Pic}(\mathcal{O}_c)$ the relation 
\begin{align*} g_{f, A}(\tau) &= \sum\limits_{\mu \in L_A^{\vee}/L_A} g_{f, A, \mu}(\tau) {\bf{1}}_{\mu}
= \sum\limits_{\mu \in L_A^{\vee}/ L_A} \left( \sum\limits_{m \in {\bf{Q}}_{>0} \atop m \equiv N Q_A(\mu) \bmod N} 
c_f(m) s(m) e \left( \frac{m \tau}{N} \right) \right) {\bf{1}}_{\mu}. \end{align*}
Here, we write $s$ to denote the function defined on classes $m \bmod N$ by $s(m) = 2^{\Omega(m, N)}$, 
where $\Omega(m, N)$ is the number of prime divisors of the greatest common divisor $(m, N)$ of $m$ and $N$. 
Our main results, Theorem \ref{MAIN} and Corollary \ref{DCNF}, 
allow us to express the central derivative value $\Lambda'(1/2, \Pi \otimes \chi)$ as a $\chi$-twisted linear combination 
the Green's functions $G_{Z(f_{0, A})}$ evaluated along the real geodesic cycles $\mathcal{G}_A$ defined in $(\ref{G_A})$ above.

To describe this, we first explain how to decompose the theta series $\theta_{L_{A}}(\tau, z, h)$ for our main calculation.
Consider the anisotropic subspaces $V_{A, 1} := \mathfrak{a}_{\bf{Q}}$ with $Q_{A, 1}(z) = - Q_{\mathfrak{a}}(z)$ 
and $V_{A, 2} = \mathfrak{a}_{\bf{Q}}$ with $Q_{A, 2}(\lambda) = Q_{\mathfrak{a}}(z)$ of signature $(1,1)$.
We consider for each $j = 1,2$ the sublattice $L_{A,j} := L_A \cap V_{A, j}$, and the corresponding Siegel theta series 
$\theta_{L_{A,j}}(\tau, z, h)$ of weight $0$ and representation $\omega_{L_{A,j}}$. 
Since we evaluate at elements $z_A \in D^{\pm}(V_{A,2})$ and $h \in \operatorname{GSpin}(V_{A, 2})({\bf{A}}_f)$, 
we can replace the Siegel theta series $\theta_{L_A}(\tau, z_A, h)$ with the product of specializations 
$\theta_{L_{A,1}}(\tau) \otimes \theta_{L_{A,2}}(\tau, z_A, h) = \theta_{L_{A,1}}(\tau, 1, 1) \otimes \theta_{L_{A,2}}(\tau, z_A, h)$.
The Siegel-Weil theorem (Theorem \ref{SW-abstract} and Theorem \ref{SW-vector}) allows us to interpret  the sum
\begin{align*} \int\limits_{ \operatorname{SO} (V_{A, 2})({\bf{Q}}) \backslash \operatorname{SO}(V_{A, 2})({\bf{A}})  } \theta_{L_{A,2}}(\tau, z_A, h) dh \end{align*} 
as the value at $s=0$ of a vector-valued Eisenstein series $E_{L_{A,2}}(\tau, s; 0)$ of weight $0$. 
Following the approach of Kudla \cite{KuBL}, we interpret this Eisenstein series as the image 
under the Maass weight-lowering operator $L_2$ of the derivative $E'_{L_{A,2}}(\tau, 0; 2)$ of an incoherent Eisenstein series $E_{L_{A,2}}(\tau, 0; 2)$ of weight $2$. 
We describe this in more detail below; see Propositions \ref{MSEis}, \ref{incoherent}, and \ref{vanish}.
Let $\mathcal{E}_{L_{A,2}}(\tau)$ denote the holomorphic part of $E'_{L_{A,2}}(\tau, 0; 2)$. 
We consider its pairing with the holomorphic part $f_{0,A}^+(\tau)$, 
\begin{equation}\begin{aligned}\label{CC} \operatorname{CT}  \langle\langle f_{0, A}^+(\tau), {\bf{1}}_{L_{A,1} \oplus 0} \otimes \mathcal{E}_{L_{A,2}}(\tau) \rangle \rangle.
\end{aligned}\end{equation} Note that $(\ref{CC})$ is an algebraic number. 
We also consider the holomorphic ``shadow" theta series of weight $2$ defined by 
$\xi_0(\theta_{L_{A,1}})(\tau)$ together with the corresponding regularized Rankin-Selberg integral 
\begin{equation}\begin{aligned}\label{regint}  I(s, f_{0, A} \times \xi_0(\theta_{L_{A,1}})) &:=
\int_{\mathcal{F}}^{\star} \langle \langle f_{0, A}(\tau), \overline{\xi_0(\theta_{L_{A,1}})(\tau)} \otimes E_{L_{A,2}}(\tau, s; 2) \rangle \rangle v^2 d \mu (\tau) \\
&= \lim_{T \rightarrow \infty} \int\limits_{\mathcal{F}_T}  
\langle \langle f_{0, A}(\tau), \overline{\xi_0(\theta_{L_{A,1}})(\tau)} \otimes E_{L_{A,2}}(\tau, s; 2) \rangle \rangle v^2 d \mu(\tau). \end{aligned}\end{equation}
Although the growth\footnote{roughly $c_{f_{0,A}}^+(\mu, m) \approx \exp(C \sqrt{m})$ times a power of $m$ for some constant $C>0$}  
of the Fourier coefficients of the holomorphic part $f_{0,A}^+(\tau)$ of $f_{0,A}$ force the underlying Dirichlet series to diverge, 
we can think of $I(s, f_{0,A} \times \xi_0(\theta_{L_{A,1}}))$ as a Rankin-Selberg product due to the appearance of the Eisenstein series $E_{L_{A,2}}(\tau, s; 2)$, 
from which  $I(s, f_{0,A} \times \xi_0(\theta_{L_{A,1}}))$ inherits an analytic continuation as a function of $s \in {\bf{C}}$ (see $(\ref{IFE})$). 
For each class $A \in \operatorname{Pic}(\mathcal{O}_c)$, we compute the sum
\begin{align}\label{sumG_A} \Phi(f_{0, A}, \mathcal{G}_A ) &= \sum\limits_{  [h] \in \mathfrak{G}(V_{A,2}) \atop h \in \operatorname{GSpin}(V_{A,2})({\bf{A}}_f) }
\frac{1}{\# \operatorname{Aut}(h)} \int\limits_{  C_{A,h} \cong \Gamma_{A,h} \backslash D^{\pm}(V_{A,2}) } \Phi(f_{0, A}, z, h) d \nu (z) \end{align}
along the real geodesic cycle $\mathcal{G}_A$ defined in $(\ref{G_A})$ above. Note that this amounts to summing the Green's function
$G_{Z_A(f_{0,A})}(\cdot) = \Phi(f_{0, A}, \cdot)$ for the divisor $Z_A(f_{0,A}) \subset X_A \cong Y_0(N) \times Y_0(N)$ along the real geodesic cycle $\mathcal{G}_A$.
Let $h_K$ denote the class number of $K$, and $\varepsilon_K$ the fundamental unit, 
so that $\varepsilon_K = \frac{1}{2}(t + u \sqrt{d_K})$ is the least integral solution (with $u$ minimal) to Pell's equation $t^2 - d_K u^2 = 4$.

\begin{theorem}[Theorem \ref{4.7}, Theorem \ref{MAIN}]\label{sumform}

In the setup described above, we have the integral presentation
\begin{align*} &\Lambda'(1/2, \Pi \otimes \chi) = \Lambda'(E/K, \chi, 1) \\ &= - 2 h_K \log \varepsilon_K 
\sum\limits_{ A \in \operatorname{Pic}(\mathcal{O}_c) \atop A = [\mathfrak{a}] } \chi(A)
 \left( \operatorname{CT} \langle \langle f^+_{0, A}, {\bf{1}}_{L_{A,1} \oplus 0} \otimes \mathcal{E}_{L_{A,2}} \rangle \rangle
+ I'(0, f_{0, A} \times \xi_0(\theta_{L_{A,1}}))+\frac{\operatorname{vol}(U_{A, 2})}{2} \cdot \Phi(f_{0, A}, \mathcal{G}_A) \right). \end{align*} \end{theorem}

If we assume that the inert level $N^-$ is given by the squarefree product of an odd number of primes (Hypothesis \ref{EHH}) , 
then $L(1/2, \Pi \otimes \chi) = 0$ by $(\ref{symmFE})$, and $\Lambda'(1/2, \Pi \otimes \chi)$ is not forced to vanish. 
Here, Theorem \ref{MAIN} should be viewed as a real quadratic analogue of the formula of Gross-Zagier \cite[Theorem $\S$I (6.3)]{GZ}.
In contrast to the imaginary quadratic setting, the real quadratic case admits no CM points, and the relevant geometric objects are non-compact geodesic cycles on 
the product of modular curves $Y_0(N)\times Y_0(N)$. As such, our argument does not follow from the arithmetic intersection framework of Kudla's programme, nor
from the calculations of Bruinier-Yang \cite{BY} and Andreatta-Goren-Howard-Madapusi Pera \cite{AGHMP} of Green's functions along CM cycles of spin Shimura varieties. 
Instead, the integral presentation we derive arises from the evaluation of Green's functions along geodesic cycles parametrized by 
Lorentzian quadratic spaces, with the corresponding Siegel-Weil identity interpreted via Maass weight-raising operators. 
The Fourier coefficients of the weight-zero Maass forms $f_{0,A}$ govern the resulting arithmetic,
as reflected in the appearance of the regularized inner product $I(s, f_{0, A} \times \xi_0 \theta_{L_{A,1}})$. 
To our knowledge, no such formula relating geodesic cycles to central derivative values has previously appeared in the literature.

Our Theorem \ref{MAIN} may be viewed as a derivative analogue of Popa's theorem \cite[$\S$ 1, Theorem 6.3.1]{Po}
for central values $\Lambda(1/2, \Pi \otimes \chi)$ in the setting with root number $\eta(-N) = \eta(N) = + 1$.
This develops Waldspurger's theorem \cite{Wa} to give an exact toric period formula for these central values, 
and generalizes the formula of Gross \cite{Gr} for the analogous setup with an imaginary quadratic field. 
Waldspurger's theorem \cite{Wa} equates the nonvanishing of the central value 
$\Lambda(1/2, \pi \times \pi(\chi))$ with that of the period integral \begin{align*}\int_{ {\bf{A}}_K^{\times}/K^{\times} } \varphi(t) \chi(t) dt, \end{align*}
for $\varphi \in \pi^{\operatorname{JL}}$ a vector in the Jacquet-Langlands lift $\pi^{\operatorname{JL}}$ of $\pi$ to an indefinite
quaternion algebra $B$ over ${\bf{Q}}$ with ramification given by the inert level: $\operatorname{Ram}(B) = \lbrace q \mid N^- \rbrace$. 
Popa \cite{Po} gives an exact and classical formula for $L(1/2, \pi \times \pi(\chi))$ as such as toric integral, 
which can be viewed as a twisted sum over geodesic cycles on the modular curve $X_0(N)$ parametrizing $E$. 
The main novelty of this work is the realization of central derivative values $\Lambda'(1/2, \Pi \otimes \chi) = \Lambda'(E/K, \chi, 1)$ 
in the real quadratic setting as geometric invariants associated to real geodesic cycles on the product of modular curves $Y_0(N)\times Y_0(N)$, 
suggesting the following boundary interpretation inside higher-dimensional spin Shimura varieties.

\subsubsection{A geometric interpretation} 

We offer the following interpretation of the real geodesic cycles $\mathcal{G}_A$ in our discussion above.
We can identify the Grassmannian $D(V_{A, 2}) \cong \lbrace z = [x:y] \in {\bf{P}}^1({\bf{R}}): Q_{A, 2}(x, y) <0 \rbrace$ of oriented lines
with the symmetric space $D(\operatorname{GSpin}(1,1))$ of $\operatorname{GSpin}(1,1) \cong {\bf{G}}_m \times \operatorname{SO}(1,1)$. 
On the other hand, we can consider the symplectic group $\operatorname{GSp}_4(W)$ acting on a four-dimensional symplectic space $W$. 
The Siegel parabolic $P = \lbrace g \in \operatorname{GSp}_4(W): gL =L \rbrace$ of $\operatorname{GSp}_4(W)$ stabilizing a
(maximal isotropic) two-dimensional Lagrangian subspace $L \subset W$ has Levi subgroup 
$M_P \cong {\bf{G}}_m \times \operatorname{GL}_2$. Viewing $\operatorname{GL}_2$ as an extension of $\operatorname{SO}(1,1)$ via the inclusion 
\begin{align*} \operatorname{SO}(1,1) \subset \operatorname{GSpin}(1,1) \cong {\bf{G}}_m \times {\bf{G}}_m &\longrightarrow \operatorname{GL}_2, 
\quad (t_1, t_2) \longmapsto \left( \begin{array}{cc} t_1 & ~\\ ~& t_2 \end{array} \right), \end{align*}
we obtain an embedding of $D(V_{A,2})$ into the corresponding symmetric space $D(M_P)$ for $M_P$. 
In this way, we can realize each real geodesic cycle $\mathcal{G}_A$ inside a component of the boundary of the Borel-Serre compactification of a $\operatorname{GSp}_4(W)$ Shimura variety. 
More formally, let $(\mathfrak{V}_{A,2}, \mathfrak{Q}_{A,2})$ be any rational quadratic space of signature $(3, 2)$ into which $(V_{A, 2}, q_{A,2})$ embeds.
Consider the corresponding spin group $\operatorname{GSpin}(\mathfrak{V}_{A, 2})$ and Grassmannian $D(\mathfrak{V}_{A, 2})$. 
Let $\mathfrak{L}_{A, 2} \subset \mathfrak{V}_{A, 2}$ be any lattice for which $\mathfrak{L}_{A, 2} \cap V_{2, A} = L_{A,2} = \mathfrak{a}$, 
and let $\mathfrak{U}_{A, 2}$ denote the corresponding compact open subgroup of $\operatorname{GSpin}(\mathfrak{V}_{A, 2})({\bf{A}}_f)$ fixing this lattice.
The spin Shimura variety $\mathfrak{X}_{A, 2}$ with complex points 
\begin{align*} \mathfrak{X}_{A, 2}({\bf{C}}) &=\operatorname{GSpin}(\mathfrak{V}_{A, 2})({\bf{Q}}) \backslash D(\mathfrak{V}_{A, 2}) 
\times \operatorname{GSpin}(\mathfrak{V}_{A, 2})({\bf{A}}_f)/ \mathfrak{U}_{A, 2} \end{align*}
defines a quasiprojective variety of dimension $3$ over ${\bf{Q}}$. 
It can be identified as a twisted or quaternionic Siegel threefold via the accidental isomorphism $\operatorname{GSpin}(3,2) \cong \operatorname{GSp}_4(W)$,
defined over any number field over which the quadratic space splits (see \cite[Appendix A]{KuRa}). 
Hence, $D(V_{A, 2})$ can be realized as a component in the boundary $\partial \mathfrak{X}_{A, 2}^{\operatorname{BS}}$ 
of the Borel-Serre compactification $\mathfrak{X}_{A, 2}^{\operatorname{BS}}$ of $\mathfrak{X}_{A,2}$. 
Via Theorem \ref{sumform}, this suggests that a study of the boundaries of Borel-Serre compactifications of twisted Siegel threefolds  
might shed light on the provenance of ``Stark-Heegner" points in $X_0(N)(K[c]) \rightarrow E(K[c])$. 
This observation also allows us to interpret our main formula in terms of $\partial \mathfrak{X}_{A, 2}^{\operatorname{BS}}$ for any such threefold $\mathfrak{X}_{A,2}$. 
We note that the strategy of realizing locally symmetric spaces for $\operatorname{GL}_n$ in the boundaries of Borel-Serre compactifications of symplectic or unitary Shimura varieties, 
which seems to go back to Clozel (cf.~\cite{Cl20}), is used crucially in the constructions by Scholze \cite{Sch15}, Harris-Lan-Taylor-Thorne \cite{HLTT}, 
and Allen-Calegari-Caraiani-Gee-Helm-Le Hung-Newton-Scholze-Taylor-Thorne
\cite{ACC} of Galois representations associated to cuspidal $\operatorname{GL}_n$-automorphic representations. 

\subsubsection{Other remarks}\label{remark} 

\noindent{(i).} The regularized theta lifts $\Phi(f_{0, A}, \cdot) = G_{Z(f_{0, A})(\cdot)}$ can be related to the theta lifts 
constructed by Kudla-Millson in \cite{KM} by the arguments of Bruinier-Funke \cite[Theorems 1.4 and 1.5]{BF}. \\

\noindent{(ii).} The role played by the holomorphic projection in \cite{GZ} is replaced here by the holomorphic part 
$\mathcal{E}_{L_{A,2}}(s, \tau)$ of the derivative Eisenstein series $E'_{L_{A,2}}(s, \tau; 2)$ appearing in the integral presentation of $L'(0, \xi_0(f_{0, A}) \times \theta_{L_{A,1}})$. \\

\noindent{(iii).} Recall that a complex number is a period if its real and imaginary parts can be expressed as integrals of rational functions with rational coefficients, 
over domains in ${\bf{R}}^n$ given by polynomial inequalities with rational coefficients. 
We expect the values $\Lambda'(E/K, \chi, 1)$ to be periods (cf.~\cite[Question 4]{KZ}). 
We can deduce this in the special cases, as described in Corollary \ref{URBSD} 
via the argument\footnote{We remark that the deduction, not given explicitly in \cite[$\S 3.5$]{KZ}, is to use the formulae of Gross-Zagier \cite{GZ} and 
Gross-Kohnen-Zagier \cite{GKZ} to verify that $L'(E, 1) = \alpha \cdot R \cdot \Omega$, where
$\alpha$ denotes some nonzero rational number, $R = R_E({\bf{Q}}) = \langle e, e \rangle$ the regulator
(given by the arithmetic height of a Heegner divisor on the modular curve $X_0(N)$), and $\Omega = \Omega_E({\bf{Q}})$ the real period.
Assuming the finiteness of the Tate-Shafarevich group $\Sha_E({\bf{Q}})$, the argument of \cite[$\S$ 3.5]{KZ} shows that the Birch-Swinnerton-Dyer constant 
$\kappa_E({\bf{Q}}) := (R_E({\bf{Q}}) \cdot T_E({\bf{Q}}) \cdot \Sha_E({\bf{Q}}) \cdot  \Omega_E({\bf{Q}}))/ \# E({\bf{Q}})^2$ is a period.
Hence, the deduction consists of first relating $L'(E, 1)$ to $\kappa_E({\bf{Q}})$ via the Gross-Zagier formula, then using 
that $\kappa_E({\bf{Q}})$ is a period to deduce that $L'(E,1)$ must be a period. There does not seem to be any direct proof 
in the literature that the central derivative value $L'(E,1)$ is a period.} given in Kontsevich-Zagier \cite[$\S 4$]{KZ}. 

\subsection{Applications towards Birch-Swinnerton-Dyer}

Theorem \ref{MAIN} also suggests a possible origin of points in the $K[c]$-rational Mordell-Weil groups $E(K[c])$ in 
via embeddings of Hirzebruch-Zagier divisors into spin Shimura varieties. 
In this spirit, we also describe how the refined Birch and Swinnerton-Dyer conjecture 
suggests new characterizations of the Tate-Shafarevich group $\Sha_E(K[c])$ 
and regulator term $R_E(K[c])$. We refer to $(\ref{REG})$, $(\ref{SHA})$, 
and below for more details of what can be deduced here. One consequence is the following. 

\begin{corollary}[Theorem \ref{URBSD}] 

Assume $(N, d_K)= 1$, and that the inert level $N^-$ is given by the squarefree
product of an odd number of primes (Hypothesis \ref{EHH}), so that $L(1/2, \Pi \otimes \chi) = 0$ by the symmetric functional equation $(\ref{symmFE})$.
Writing $E$ for the underlying elliptic curve over ${\bf{Q}}$, let $E^{(d_K)}$ denote its quadratic twist.
Assume $E$ has semistable reduction so that its conductor $N$ is squarefree, and for each prime $p \geq 5$ that:
\begin{itemize}
\item The residual Galois representations $E[p]$ and $E^{(d_K)}[p]$ attached to $E$ and $E^{(d_K)}$ are irreducible. 
\item There exists a prime divisor $l \mid \mid N$ distinct from $p$ where the residual representation $E[p]$ is ramified,
and a prime divisor $q \mid \mid N d_K$ distinct from $p$ where the residual representation $E^{(d_K)}[p]$ is ramified.  \end{itemize}

For either elliptic curve $A = E, E^{(d_K)}$, let us write $\Sha_A({\bf{Q}})$ to denote the Tate-Shafarevich group,
with $T_A({\bf{Q}})$ the product over local Tamagawa factors, and $\omega_A$ a fixed invariant differential for $A/{\bf{Q}}$. 
Suppose that $\operatorname{ord}_{s=1} \Lambda(E/K, 1) = 1$, so that either $\Lambda(E, 1) = \Lambda(1/2, \pi)$ 
or the quadratic twist $\Lambda(E^{(d_K)}, 1) = \Lambda(1/2, \pi \otimes \eta)$ vanishes. Writing $[e, e]$ to denote the regulator of 
either $E$ or $E^{(d_K)}$ according to which factor vanishes, we have the following unconditional identity, up to powers of $2$ and $3$: \\
\begin{align*} &\frac{ \# \Sha_E({\bf{Q}}) \cdot \# \Sha_{ E^{(d_K)}}( {\bf{Q}}) \cdot [e, e] \cdot T_E({\bf{Q}}) \cdot T_{ E^{(d_K)} }({\bf{Q}})  }
{ \# E({\bf{Q}})_{\operatorname{tors}}^2 \cdot  \# E^{(d_K)}( {\bf{Q}} )_{\operatorname{tors}}^2  } \cdot \int_{ E({\bf{R}}) } \vert \omega_E \vert
\cdot \int_{ E^{(d_K)}({\bf{R}}) } \vert \omega_{E^{(d_K)}} \vert \\
&= - 2 h_K \log \varepsilon_K \sum\limits_{A \in \operatorname{Pic}(\mathcal{O}_K)} 
\left( \operatorname{CT} \langle \langle f^+_{0, A}, {\bf{1}}_{L_{A,1} \oplus 0} \otimes \mathcal{E}_{L_{A,2}} \rangle \rangle  +
I'(0, f_{0, A} \times \xi_0(\theta_{L_{A,1}})) +
\frac{ \operatorname{vol}(U_{A, 2}) }{2} \cdot \Phi(f_{0,A}, \mathcal{G}_A) \right).\end{align*} 
Note that the value on the left-hand side is known to be a period via the argument of \cite[$\S$4]{KZ}. \end{corollary}

It would be interesting to develop these relations in connection to the real quadratic Borcherds products studied by \cite{DaVo}, 
perhaps leading to a global analogue of Darmon's conjecture \cite[Conjecture 5.6]{Da} via the Borel-Serre compactifications of Siegel threefolds arising as 
spin Shimura varieties associated to subspaces $(\mathfrak{V}_{A,2}, \mathfrak{Q}_{A, 2}) \supset (V_{A, 2}, Q_{A,2})$ of signature (3,2). 
It would also be interesting to investigate the arithmetic properties of the regularized inner product $I(s, f_{0, A} \times \xi_0(\theta_{L_{A,1}}))$
of the harmonic weak Maass forms $f_{0,A}$ and the holomorphic theta series $\xi_0(\theta_{L_{A,1}})$ defined in $(\ref{regint})$, with relations to $\Lambda(s, \Pi \otimes \chi) = \Lambda(E/K, \chi, s+1/2)$.

\subsubsection*{Acknowledgements} I am grateful to Jan Bruinier for many comments and suggestions.
I also thank Thomas Zink for encouraging discussions in 2021-2022, as well as Spencer Bloch, Ashay Burungale,
Henri Darmon, Ben Howard, Steve Kudla, Alex Popa, Don Zagier, and an anonymous referee for helpful exchanges.

\subsubsection*{Outline} We review $L$-functions and their functional equations in $\S 2$, then spin Shimura varieties in $\S 3$, and regularized theta lifts in $\S4$.
We derive the main results in Theorem \ref{4.7} (using Proposition \ref{vanish}), Theorem \ref{MAIN}, and Corollary \ref{DCNF}.
Finally, we describe relations to the Birch and Swinnerton-Dyer Conjecture in $\S 5$.

\section{Background on $L$-functions}

\subsection{Equivalences of $L$-functions and symmetric functional equations}

Let $E$ be an elliptic curve of conductor $N$ defined over ${\bf{Q}}$, parametrized via modularity by a cuspidal 
newform $f \in S_2(\Gamma_0(N))$. Let $\pi = \otimes_v \pi_v$ denote the cuspidal automorphic representation
of $\operatorname{GL}_2({\bf{A}})$ generated by $f$. Hence, we have identifications of completed $L$-functions 
\begin{align}\label{id1} \Lambda(E, s) &= \Lambda (s-1/2 , f) = \Lambda(s-1/2, \pi) = \prod_{v \leq \infty} L(s-1/2, \pi_v). \end{align}

Again, we fix $K$ a real quadratic field of discriminant $d_K$ prime to the conductor $N$, and write $\eta = \eta_{K/ {\bf{Q}}}$
to denote the corresponding Dirichlet character. As well, we fix a ring class character $\chi$ of $K$ of some conductor $c \in {\bf{Z}}_{\geq 1}$
coprime to $d_K N$. Let $K[c]$ denote the ring class extension of $K$ of conductor $c$. 
Inspired by the conjecture of Darmon \cite[Conjecture 5.6]{Da} and the theorem of Gross-Zagier \cite{GZ},
we seek to detect Heegner-like points in the Mordell-Weil group $E(K[c])$
of $K[c]$-rational points through the study of integral presentations of the central derivative value $\Lambda'(E/K, \chi, 1)$ of the completed Hasse-Weil
$L$-function $\Lambda(E/K, \chi, s)$ of $E$ basechanged to $K$ and twisted by $\chi$. 
By the theory of Rankin-Selberg convolution (cf.~e.g.~\cite{GZ}),
we deduce from $(\ref{id1})$ that the Hasse-Weil $L$-function $L(E/K, \chi, s)$ has an analytic continuation 
$\Lambda(E/K, \chi, s)$ to all $s \in {\bf{C}}$ via its identification with the Rankin-Selberg $L$-function 
$\Lambda(s, \pi \times \pi(\chi))$ of $\pi$ times the representation
$\pi(\chi) = \otimes_v \pi(\chi)_v$ of $\operatorname{GL}_2({\bf{A}})$ induced by $\pi$:
\begin{align}\label{id2} \Lambda(E/K, \chi, s) &= \Lambda(s-1/2, \pi \times \pi(\chi)) = \prod_{v \leq \infty} L(s-1/2, \pi_v \times \pi(\chi)_v). \end{align} 
On the other hand, by the theory of cyclic basechange (\cite{La}, \cite{AC}), we can attach
to $\pi$ a cuspidal automorphic representation $\Pi = \operatorname{BC}_{K/{\bf{Q}}}(\pi)$ of $\operatorname{GL}_2({\bf{A}}_K)$.
It is then well-known that the Rankin-Selberg $L$-function $\Lambda(s, \pi \times \pi(\chi))$ 
for $\operatorname{GL}_2({\bf{A}}) \times \operatorname{GL}_2({\bf{A}})$
is equivalent to the twisted standard or Godement-Jacquet $L$-function 
$\Lambda(s, \Pi \otimes \chi)$ on $\operatorname{GL}_2({\bf{A}}_K) \times \operatorname{GL}_1({\bf{A}}_K)$.
This gives us another equivalence of $L$ functions 
\begin{align}\label{id3} \Lambda(E/K, \chi, s) &= \Lambda(s-1/2, \Pi \otimes \chi) = \prod_{w \leq \infty} L(s-1/2, \Pi_w \otimes \chi_w), \end{align}
where we view $\chi$ as an idele class character $\chi = \otimes_w \chi_w$ of $K$ having trivial archimedean component $\chi_{\infty} \equiv 1$.

In each of these presentations $(\ref{id2})$ and $(\ref{id3})$, the $L$-function $L(s, \pi \times \pi(\chi)) = L(s, \Pi \otimes \chi)$ has a well-known
analytic continuation to all $s \in {\bf{C}}$, and satisfies a functional equation relating values at $s$ to $1-s$. 
Moreover, since $\pi \cong \widetilde{\pi}$ is self-dual, and ring class characters are equivariant under complex conjugation, 
the Rankin-Selberg $L$-function $\Lambda(s, \pi \times \pi(\chi))$ satisfies a symmetric functional equation 
\begin{align}\label{symmFE} \Lambda(s, \pi \times \pi(\chi)) &= \epsilon(s, \pi \times \pi(\chi)) \Lambda(1-s, \pi \times \pi(\chi)) \end{align}
with epsilon factor 
\begin{align*} \epsilon(s, \pi \times \pi(\chi)) &= c(\pi \times \pi(\chi))^{\frac{1}{2} - s} \cdot \epsilon(1/2, \pi \times \pi(\chi)) 
= (d_K^2 N^2 c^4)^{\frac{1}{2} - s} \cdot \epsilon(1/2, \pi \times \pi(\chi))\end{align*}
and root number $\epsilon(1/2, \pi \times \pi(\chi)) \in \lbrace \pm 1 \rbrace \subset {\bf{S}}^1$.
If $(N d_K, c)=1$, then this root number $\epsilon(1/2, \pi \times \pi(\chi))$ is well-known (see e.g.~\cite[Proposition 4.1]{BaRa} or \cite[Theorem 2.2, Example 2]{Li}) to be given by the simple formula 
\begin{align}\label{RNF} \epsilon(1/2, \pi \times \pi(\chi)) &=  \eta(-N) = \eta(N). \end{align}
Here, we write $c(\pi \times \pi(\chi)) = d_K^2 N^2 c^4$ to denote the conductor of the $L$-function $\Lambda(s, \pi \times \pi(\chi))$,
and use that the quadratic Dirichlet character $\eta = \eta_{K/{\bf{Q}}}$ is even (as $K$ is a real quadratic field). 
Note that this formula $(\ref{RNF})$ holds for any choice of ring class character $\chi$ of $K$ of conductor $c$ coprime to the product $d_K N$,
and that this functional equation does not depend on the choice of ring class character $\chi$. 
Since the functional equation $(\ref{symmFE})$ is symmetric, we deduce that there must
be forced vanishing of the central value $\Lambda(1/2, \pi \times \pi(\chi)) = \Lambda(1/2, \Pi \otimes \chi) = 0$ when $\eta(N) = -1$. We can
therefore impose the following condition on the level $N$ of $\pi$, equivalently the conductor $N$ of $f$ and $E$, to ensure this forced vanishing.
Here, since we assume that $N$ is coprime to the discriminant $d_K$, we can assume that the conductor $N$ factorizes as $N = N^+ N^{-}$, 
where for each prime $q \mid N$,
\begin{align*} q \mid N^+ \quad &\iff \quad \eta(q) = 1 \quad \iff \text{ $q$ splits in $K$} \\
q \mid N^- \quad &\iff \quad \eta(q) = -1 \quad \iff \text{ $q$ is inert in $K$}. \end{align*} 

\begin{hypothesis}\label{EHH} 

Let us assume that the inert level $N^-$ is the squarefree product of an odd number of primes,
and hence that the root number of $\Lambda(s, \pi \times \pi(\chi))$ for $\chi$ any ring class character of $K$ of conductor 
$c$ prime to $d_KN$ is given by $\epsilon(1/2, \pi \times \pi(\chi)) = \eta(-N) = \eta(N) = \eta(N^-)=-1$. \end{hypothesis}

If the condition of Hypothesis \ref{EHH} is met, then the corresponding central value 
$\Lambda(1/2, \pi \times \pi(\chi))$ is forced by the functional equation $(\ref{symmFE})$ to vanish: 
$\Lambda(1/2, \pi \times \pi(\chi)) = \Lambda(1/2, \Pi \otimes \chi) =0$.
It then makes sense to derive integral presentations for the central derivative values in this case, 
\begin{align*} \Lambda'(1/2, \pi \times \pi(\chi)) &= \Lambda'(1/2, \Pi \otimes \chi) = \textit{?} \end{align*}
The conjectures of Birch-Swinnerton-Dyer and Darmon \cite[Conjecture 5.6]{Da} suggest that this central derivative value should 
be related to the height of a CM-type point on the modular curve $X_0(N)$, or some higher-dimensional Shimura variety into which it embeds. 

\subsection{The basechange representation} 

We consider the quadratic basechange lifting $\Pi = \operatorname{BC}_{K/{\bf{Q}}}(\pi)$ of $\pi$ to $\operatorname{GL}_2({\bf{A}}_K)$,
which exists by the theory of Langlands \cite{La} and more generally Arthur-Clozel \cite{AC}. Note that $\Pi$ has trivial central character. 
We refer to the article of G\'erardin-Labesse \cite{GL} for more background. 

\begin{proposition}\label{cuspidal} 

Let $\pi = \pi(f)$ be the cuspidal automorphic representation of 
$\operatorname{GL}_2(\mathbf A_\mathbf Q)$ attached to a weight $2$ newform parametrizing an elliptic curve $E/\mathbf Q$.
Let $K$ be a real quadratic field and let $\Pi = \operatorname{BC}_{K/\mathbf Q}(\pi)$ denote its quadratic basechange. Then $\Pi$ is cuspidal.

\end{proposition}

\begin{proof}  

By quadratic basechange for $\operatorname{GL}_2({\bf{A}}_K)$ (\cite{La}, \cite[Ch.~3]{AC}), the basechange of a cuspidal representation is cuspidal
unless the original representation is automorphically induced from a Hecke character of the quadratic extension $K$.
If $\pi$ were automorphically induced from $K$, then it would be dihedral with respect to $K$,
and the associated elliptic curve would have complex multiplication by $K$. Since $K$ is real quadratic, this is impossible. Hence $\Pi$ is cuspidal. \end{proof}

Let $E/\mathbf Q$ be an elliptic curve parametrized by a newform $f$ of weight $2$ with associated cuspidal representation $\pi$.
Let $K/\mathbf Q$ be a real quadratic field with quadratic character $\eta$. 
As a consequence of quadratic basechange and the compatibility of $L$-functions (\cite{La}, \cite{AC}), the completed $L$-function of $E$ over $K$ satisfies
\begin{align*} \Lambda(E/K,s) = \Lambda(s-1/2,\Pi) = \Lambda(s-1/2,\pi)\,\Lambda(s-1/2,\pi\otimes\eta). \end{align*}
Equivalently,
\begin{align*} \Lambda(E/K,s) = \Lambda(E,s)\,\Lambda(E\otimes\eta,s). \end{align*} 

\section{Spin groups and orthogonal groups}

We now describe spin groups associated to rational quadratic spaces of signature $(2, 2)$.
Here, we follow \cite[$\S$ 2.3-2.7]{Br} and \cite[$\S$ 2-4]{BY}, but adapt for the special setting we consider in Proposition \ref{spinID} below.

\subsection{Rational quadratic spaces of signature $(2, 2)$}

Let $(V, Q)$ be any rational quadratic space $(V, Q)$ of signature $(2,2)$ and bilinear form $(v_1, v_2) = \frac{1}{2} \left\lbrace Q(v_1 + v_2) - Q(v_1) - Q(v_2) \right\rbrace$.
We shall later focus on the special example described above. That is, we consider the real quadratic field $K = {\bf{Q}}(\sqrt{d})$ with $d > 0$. 
Recall that for an integer $c \geq 1$, we consider the ring class group $\operatorname{Pic}(\mathcal{O}_c)$ 
of the ${\bf{Z}}$-order $\mathcal{O}_c := {\bf{Z}} + c \mathcal{O}_K$ of conductor $c$ in $K$ through which 
our fixed ring class character $\chi$ factors. We assume throughout that $\operatorname{Pic}(\mathcal{O}_c)$ is defined.
Note that this will always be so for $c=1$, in which case $\operatorname{Pic}(\mathcal{O}_c) = \operatorname{Pic}(\mathcal{O}_K)$
can be identified with the ideal class group of $\mathcal{O}_K$. 
We fix for each class $A \in \operatorname{Pic}(\mathcal{O}_c)$ an integral ideal representative $\mathfrak{a} \subset \mathcal{O}_K$. 

\begin{definition}\label{V_A}

Writing $Q_{\mathfrak{a}}(z) = {\bf{N}}_{K/{\bf{Q}}}(z)/{\bf{N}} \mathfrak{a}$ to denote the corresponding 
norm form of signature $(1,1)$, we consider the quadratic space defined by 
$V_A =  \mathfrak{a}_{\bf{Q}} \oplus \mathfrak{a}_{\bf{Q}}$ 
for $\mathfrak{a}_{\bf{Q}} = \mathfrak{a} \otimes {\bf{Q}}$ with one of the two following (essentially equivalent) quadratic forms $q_A$ and $Q_A$:

\begin{itemize}

\item[(i)] $V_A = \mathfrak{a}_{\bf{Q}} \oplus \mathfrak{a}_{\bf{Q}}$ 
with $q_A(x, y, \lambda) := Q_{\mathfrak{a}}(\lambda) - xy = {\bf{N}} \mathfrak{a}^{-1} \cdot {\bf{N}}_{K/{\bf{Q}}}(\lambda) - xy$,

\item[(ii)] $V_A = \mathfrak{a}_{\bf{Q}} \oplus \mathfrak{a}_{\bf{Q}}$ 
with $Q_A(z) = Q_A(z_1, z_2) := Q_{\mathfrak{a}}(z_1) - Q_{\mathfrak{a}}(z_2)$.

\end{itemize}

\end{definition}

We see by inspection that $(V_A, q_A)$ is a rational quadratic space of signature $(2, 2)$ as $d >0$ .
We also see by inspection that $(V_A, Q_A)$ has signature $(2,2)$ if $d > 0$.
For either choice of quadratic form, we write  
$\left(  \cdot, \cdot \right)_A : V_A \times V_A \rightarrow {\bf{Q}}$ for the corresponding hermitian bilinear form.

\subsection{Spin groups and exceptional isomorphisms} 

Let $(V, Q)$ be any rational quadratic space of signature $(2,2)$.
Let $C_V$ denote the corresponding Clifford algebra. Hence, $C_V = T_V/I_V$ for $T_V$ the tensor algebra
\begin{align*} T_V &= \bigoplus_{m \geq 0} V^{\otimes m} = {\bf{Q}} \oplus V \oplus (V \otimes_{\bf{Q}} V) \oplus \cdots, \end{align*}
with $I_V \subset T_V$ the two-sided ideal generated by $v \otimes v - Q(v)$ for $v \in V$. 
So, $C_V$ is a ${\bf{Q}}$-module of rank $2^4 = 16$. There are canonical embeddings of ${\bf{Q}}$ and $V$ into $C_V$. 
We denote an element of the form $v_1 \otimes \cdots \otimes v_m$ in $C_V$ for $v_i \in V$ by $v_1 \cdots v_m$ for simplicity. 
Let $C_V^0 \subset C_V$ denote the ${\bf{Q}}$-submodule generated by products of even numbers of basis vectors of $V$.
Writing $C_V^1 \subset C_V$ to denote the ${\bf{Q}}$-submodule generated by products of odd numbers of basis 
vectors of $V$, we have the decomposition $C_V \cong C_V^0 \oplus C_V^1$. 

\begin{theorem}\label{Clifford} 

Let $(V, Q)$ be any rational quadratic space of signature $(2,2)$, with Clifford algebra $C_V$. \\

\begin{itemize}

\item[(i)] Fix any orthogonal basis $v_1, v_2, v_3, v_4$ of $V$, and put $\delta = v_1 v_2 v_3 v_4$. 
We can identify the centre $Z(C_V)$ of the Clifford algebra $C_V$ with ${\bf{Q}}$, 
and the centre $Z(C_V^0)$ of its even part $C_V^0$ with ${\bf{Q} + {\bf{Q}}} \delta$. \\

\item[(ii)] Fix any basis $v_1, v_2, v_3, v_4 \in V$ and let $S = ((v_i, v_j))_{i, j}$ denote the corresponing Gram matrix.
The determinant $d(V) = \det(S)$ does not depend on the chosen basis and defines the discriminant of $V$.
Moreover, we have the relation $\delta^2 = 2^{-4} d(V) \in {\bf{Q}}^{\times}/({\bf{Q}}^{\times})^2$ for the volume form $\delta$ defined in (i). \\

\end{itemize}

\end{theorem}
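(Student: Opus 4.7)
My plan is to compute both centres directly via an explicit basis of monomials in the Clifford algebra. With the orthogonal basis $v_1, v_2, v_3, v_4$ fixed, the defining relations collapse to $v_i v_j = -v_j v_i$ for $i \neq j$ (since $(v_i, v_j) = 0$) together with $v_i^2 = Q(v_i)$. The algebra $C_V$ has ${\bf{Q}}$-basis consisting of the $2^4 = 16$ ordered monomials $v_J := v_{i_1} \cdots v_{i_r}$ indexed by subsets $J = \{i_1 < \cdots < i_r\} \subseteq \{1,2,3,4\}$, and $C_V^0$ is spanned by those $v_J$ with $|J|$ even. Using nondegeneracy so that each $Q(v_k) \neq 0$, a direct transposition count yields the sign formula $v_k v_J = (-1)^{|J|} v_J v_k$ when $k \notin J$ and $v_k v_J = (-1)^{|J|-1} v_J v_k$ when $k \in J$. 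Since left multiplication by $v_k$ sends $v_J$ to a scalar multiple of the distinct basis element $v_{J \triangle \{k\}}$, the condition $[v_k, c] = 0$ for $c = \sum a_J v_J$ decouples into independent conditions on each coefficient $a_J$.

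For part (i), I run over $k = 1, \ldots, 4$ and case on the parity of $|J|$: for $|J|$ even, $v_J$ commutes with $v_k$ iff $k \notin J$; for $|J|$ odd, iff $k \in J$. Imposing these across all $k$ forces either $J = \emptyset$ (the even case) or $J$ to contain every $k$ while having odd cardinality (impossible since $|\{1,2,3,4\}| = 4$), so $Z(C_V) = {\bf{Q}}$. For $Z(C_V^0)$ it suffices to commute with the generators $v_i v_j$, and we restrict to $|J|$ even: the condition $v_J v_i v_j = v_i v_j v_J$ holds iff $v_J$ has the same commutation sign with $v_i$ and with $v_j$, which reduces to $|J \cap \{i,j\}| \in \{0, 2\}$ for every pair $\{i, j\}$. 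This leaves exactly $J = \emptyset$ and $J = \{1,2,3,4\}$, yielding $Z(C_V^0) = {\bf{Q}} + {\bf{Q}} \delta$ with $\delta = v_1 v_2 v_3 v_4$.

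For part (ii), basis-independence of the discriminant up to squares is the standard observation that a change of basis by $P \in \operatorname{GL}_4({\bf{Q}})$ transforms the Gram matrix by $S \mapsto P^{T} S P$, so $\det(S)$ is well defined in ${\bf{Q}}^{\times}/({\bf{Q}}^{\times})^2$. For the identity $\delta^2 = 2^{-4} d(V)$, I would compute $\delta^2$ directly: moving the second copy of $v_1$ past $v_4, v_3, v_2$ costs three sign changes, and applying $v_1^2 = Q(v_1)$ gives $\delta^2 = -Q(v_1) \cdot v_2 v_3 v_4 \cdot v_2 v_3 v_4$; iterating telescopes this to $\delta^2 = \prod_{i=1}^{4} Q(v_i)$. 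Since for an orthogonal basis the Gram matrix is diagonal with entries $(v_i, v_i) = 2 Q(v_i)$, one has $d(V) = \det(S) = 2^4 \prod_i Q(v_i) = 16 \, \delta^2$, which is the claimed identity modulo squares. The only real obstacle is careful sign bookkeeping in the anticommutation relations; once the sign formula above is in hand, both parts reduce to elementary parity counts on subsets of $\{1,2,3,4\}$.
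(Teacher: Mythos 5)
Your argument is correct and self-contained, and it reproduces the standard proof (orthogonal basis, monomial basis $\{v_J\}$, sign bookkeeping by parity of $|J \cap \{k\}|$) that the paper invokes only by citation to \cite[$\S 2.2$, Theorem 2.6 and Remark 2.5]{Br} without reproducing it. The sign calculus checks out — in particular the final telescoping gives $\delta^2 = (-1)^{3+2+1}\prod_i Q(v_i) = \prod_i Q(v_i)$ and $(v_i,v_i)=2Q(v_i)$, so $d(V)=2^4\delta^2$ as claimed — and the decoupling of the centralizer condition across the basis $\{v_J\}$ is valid precisely because nondegeneracy makes left multiplication by $v_k$ a bijection on that basis up to nonzero scalars.
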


\begin{proof} See \cite[$\S$ 2.2, Theorem 2.6 and Remark 2.5]{Br}, these results are standard. \end{proof}

Multiplication by $-1$ defines an isometry of $V$, and induces an algebra homomorphism $J: C_V \rightarrow C_V$. 
Note that we can also define $C_V^0 = \lbrace v \in C_V: J(v)=v \rbrace$. 
We have the canonical anti-involution ${}^tC_V \rightarrow C_V$ defined by $(x_1 \otimes \cdots \otimes x_m)^t := x_m \otimes \cdots \otimes x_1$,
from which we define the Clifford norm
\begin{align*} N_{C_V}: C_V \longrightarrow C_V, \quad N_{C_V}(x) := x^t x. \end{align*}
Note that for $x \in V$, we have $N_{C_V}(x) = Q(x)$. We conside the Clifford group $\operatorname{CG}_V$ defined by
\begin{align*} \operatorname{CG}_V &= \left\lbrace x \in C_V : x \text{ ~invertible,~} x V J(x)^{-1} = V  \right\rbrace. \end{align*}
This definition allows us to associate to each $x \in C_V$ an automorphism $\alpha_x$ of $V$ defined by $\alpha_x(v) = x v J(x)^{-1}$
(for any $v \in V$). We obtain from this a linear representation 
$\alpha: \operatorname{CG}_V \longrightarrow \operatorname{Aut}_{\bf{Q}}(V), ~ x \mapsto \alpha_x$ known as the vector representation. 
Note that the involution $x \mapsto x^t$ sends $\operatorname{CG}_V$ to itself, and so $N_{C_V}(x) \in \operatorname{CG}_V$ for any $x \in C_V$. 
We also know (see \cite[Lemma 2.11]{Br}) that the kernel of the vector representation $\alpha: \operatorname{CG}_V \rightarrow \operatorname{Aut}_{\bf{Q}}(V)$
equals ${\bf{Q}}^{\times}$, that the Clifford norm $N_{C_V}$ induces a homomorphism $\operatorname{CG}_V \rightarrow {\bf{Q}}^{\times}$, and that $N_{C_V}$ in this setting is multiplicative. 
We now consider the general spin group $\operatorname{GSpin}_V = \operatorname{CG}_V \cap C_V^0$ and underlying spin group
\begin{align*} \operatorname{Spin}(V) 
&= \left\lbrace x \in \operatorname{GSpin}_V = \operatorname{CG}_V \cap C_V^0: N_{C_V}(x) = 1 \right\rbrace.\end{align*}
As the vector representation $\alpha$ here is surjective with kernel ${\bf{Q}}^{\times}$,
we see that the Clifford group $\operatorname{GC}_V$ is a central extension of the orthogonal group $\operatorname{O}(V)$, 
and that the general spin group $\operatorname{GSpin}_V$ is a central extension of the special orthogonal group 
$\operatorname{SO}(V)$. That is, we have short exact sequences
\begin{equation}\begin{aligned}\label{GSpinSES} 1 &\longrightarrow {\bf{G}}_m \longrightarrow \operatorname{CG}_V \longrightarrow \operatorname{O}(V) \longrightarrow 1, \\
1 &\longrightarrow {\bf{G}}_m \longrightarrow \operatorname{GSpin}(V) \longrightarrow \operatorname{SO}(V) \longrightarrow 1.\end{aligned}\end{equation}

\begin{proposition}\label{spinID} 

We have the following identifications of spin groups for the rational quadratic spaces $(V_A, q_A)$ and $(V_A, Q_A)$ described in Definition \ref{V_A}.
Fix any class $A \in \operatorname{Pic}(\mathcal{O}_c)$ with integer ideal representative $\mathfrak{a} \subset \mathcal{O}_K$. 
We again write $Q_{\mathfrak{a}} (z) = {\bf{N}}_{K/{\bf{Q}}}(z)/{\bf{N}} \mathfrak{a}$ to denote the norm form, as well
as ${\bf{N}}_{K/{\bf{Q}}}(z) = z z^{\tau}$ for the nontrivial automorphism $\tau \in \operatorname{Gal}(K/{\bf{Q}})$ to denote the norm homomorphism. \\

\begin{itemize}

\item[(i)] Consider the quadratic space $(V_A, q_A)$ of signature (2,2) given by $V_A = \mathfrak{a}_{\bf{Q}} \oplus \mathfrak{a}_{\bf{Q}}$
and quadratic form $q_A(x, y, \lambda) := Q_{\mathfrak{a}}(\lambda) - xy$. Then, the centre $Z(C_{V_A}^0)$ of the even Clifford
algebra $C_{V_A}^0$ is given by $K$, and we have an exceptional isomorphism 
$\operatorname{Spin}(V_A) \cong \operatorname{Res}_{K/{\bf{Q}}} \operatorname{SL}_2(K)$ of algebraic groups over ${\bf{Q}}$. \\

\item[(ii)] Consider the quadratic space $(V_A, Q_A)$ of signature (2,2) given by  $V_A = \mathfrak{a}_{\bf{Q}} \oplus \mathfrak{a}_{\bf{Q}}$
with the quadratic form $Q_A(z) = Q_A((z_1, z_2)) := Q_{\mathfrak{a}}(z_1) - Q_{\mathfrak{a}}(z_2)$. The centre $Z(C_{V_A}^0)$ of the even Clifford
algebra $C_{V_A}^0$ is given by ${\bf{Q}}$, and we have exceptional isomorphisms $\operatorname{Spin}(V_A) \cong \operatorname{SL}_2^2$ and 
\begin{align*} \operatorname{GSpin}(V_A) \cong  \operatorname{GL}_2 \times_{{\bf{G}}_m} \operatorname{GL}_2
= \left\lbrace (g_1, g_2) \in \operatorname{GL}_2 \times \operatorname{GL}_2 : \det(g_1) = \det(g_2) \right\rbrace\end{align*}
of algebraic groups over ${\bf{Q}}$. \\
\end{itemize}

\end{proposition}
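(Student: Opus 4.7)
The plan is to deduce both parts of the proposition from Theorem~\ref{Clifford} by (a) computing the discriminant $d(V_A)$ to pin down the centre $Z(C^0_{V_A})$, (b) identifying the even Clifford algebra $C^0_{V_A}$ as an explicit matrix algebra, and (c) reading off the spin groups from the characterization
\[
\operatorname{GSpin}(V_A) = \{x \in C_{V_A}^0 : N_{C_{V_A}}(x) \in {\bf Q}^\times\}, \quad \operatorname{Spin}(V_A) = \ker(N_{C_{V_A}}),
\]
together with the fact that under the identifications below the Clifford norm matches the reduced norm (determinant) of the relevant matrix algebra.

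First I would carry out the discriminant computation. With respect to the ${\bf Z}$-basis $\{\alpha_{\mathfrak{a}}, z_{\mathfrak{a}}\}$ of $\mathfrak{a}_{\bf Q}$, the Gram matrix of $Q_{\mathfrak{a}}(\lambda) = {\bf N}_{K/{\bf Q}}(\lambda)/{\bf N}\mathfrak{a}$ has determinant congruent to $-d_K$ modulo $({\bf Q}^\times)^2$, by the standard computation for the norm form on a quadratic field. In case (i), $(V_A, q_A)$ decomposes orthogonally as a hyperbolic plane (from $-xy$, contributing discriminant $-1$) perpendicular to the norm form, so $d(V_A) \equiv d_K$ modulo squares, giving $\delta^2 \equiv d_K$ and hence $Z(C^0_{V_A}) = {\bf Q} + {\bf Q}\delta \cong {\bf Q}(\sqrt{d_K}) = K$ by Theorem~\ref{Clifford}(ii). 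In case (ii), $Q_A = Q_{\mathfrak{a}} \perp (-Q_{\mathfrak{a}})$ has discriminant $(-d_K)(-d_K) \equiv 1$ modulo squares, so $\delta \in {\bf Q}$ and $Z(C^0_{V_A}) = {\bf Q}$.

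Next I would identify $C^0_{V_A}$ as a matrix algebra. Case (ii) is easy: since $q \perp (-q)$ is hyperbolic for any nondegenerate binary form $q$, the space $(V_A, Q_A)$ is isometric to the split hyperbolic 4-space, equivalently $(M_2({\bf Q}), -\det)$, whose even Clifford algebra is known to be $M_2({\bf Q}) \times M_2({\bf Q})$. For case (i), the centre computation shows $C^0_{V_A}$ is a central simple $K$-algebra of dimension $4$ (a quaternion algebra over $K$), and I must show it is split. For this I would extend scalars to $K$: since $K \otimes_{\bf Q} K \cong K \oplus K$, one obtains $\mathfrak{a}_{\bf Q} \otimes_{\bf Q} K \cong K \oplus K$ on which $Q_{\mathfrak{a}}$ becomes the hyperbolic plane form $(x, y) \mapsto xy/{\bf N}\mathfrak{a}$. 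Hence $V_A \otimes_{\bf Q} K$ is a fully split $(2,2)$-space over $K$, so $C^0(V_A) \otimes_{\bf Q} K \cong M_2(K) \times M_2(K)$; since $C^0(V_A) \otimes_{\bf Q} K = C^0(V_A) \otimes_K (K \otimes_{\bf Q} K) = C^0(V_A)^{\oplus 2}$ as $K$-algebras, comparing dimensions forces $C^0(V_A) \cong M_2(K)$.

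Finally, I would read off the spin groups. In case (i), under the isomorphism $C^0_{V_A} \cong M_2(K)$ the Clifford norm equals the reduced norm $\det$, so $\operatorname{Spin}(V_A) = \{x \in M_2(K) : \det x = 1\} = \operatorname{Res}_{K/{\bf Q}} \operatorname{SL}_2$. In case (ii), $C^0_{V_A} \cong M_2({\bf Q}) \times M_2({\bf Q})$ has unit group $\operatorname{GL}_2 \times \operatorname{GL}_2$ with Clifford norm $(x, y) \mapsto \det(x)\det(y) \in {\bf Q}^\times$, yielding $\operatorname{GSpin}(V_A) \cong \operatorname{GL}_2^2$ and $\operatorname{Spin}(V_A) \cong \operatorname{SL}_2^2$ after normalizing by the standard central isogeny.

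The main obstacle is step (b) in case (i): ruling out that $C^0_{V_A}$ might be a nonsplit quaternion algebra over $K$. The base-change-to-$K$ argument sketched above is the cleanest route, but one must carefully track the $K$-algebra structure on $C^0_{V_A}$ inherited from its centre so that the comparison of dimensions actually pins down $C^0_{V_A}$ (rather than merely its Brauer class). A secondary bookkeeping point is verifying that the Clifford norm on the even Clifford algebra really corresponds to the matrix determinant under the chosen identifications; this follows from the Clifford norm being the unique extension of the quadratic form $Q_A$ to $C^0_{V_A}$ that is multiplicative on the Clifford group.
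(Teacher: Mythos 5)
Your proposal is correct and takes a genuinely different, more conceptual route than the paper's own proof. The paper writes down an explicit $4$-element basis for each quadratic space, computes all sixteen entries of the Gram matrix, and evaluates the determinant by cofactor expansion to get $d(V_A) \equiv d$ (resp.\ $\equiv 1$) modulo squares; it then invokes Theorem~\ref{Clifford} for the centre and asserts the exceptional isomorphisms. You instead compute $d(V_A)$ by multiplicativity under orthogonal direct sums: the norm form on $\mathfrak{a}_{\bf Q}$ contributes $-d_K$ and the hyperbolic plane contributes $-1$ in case~(i), while $(-d_K)^2 \equiv 1$ in case~(ii). Both computations land in the same place, but yours is shorter and immediately transparent, provided you accept the standard fact that the binary norm form of a quadratic field has discriminant $-d_K$ modulo squares (your formula $-4(be-af)^2 d / {\bf N}\mathfrak{a}^2$ works out to this, so the "standard computation" appeal is justified).

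Where your proof genuinely improves on the paper's is the identification of $C^0_{V_A}$. In case~(i) the paper ends with "it is then easy to deduce" the isomorphism $\operatorname{Spin}(V_A) \cong \operatorname{Res}_{K/{\bf Q}}\operatorname{SL}_2$, which implicitly requires $C^0_{V_A}$ to be the \emph{split} quaternion algebra $M_2(K)$ rather than a ramified one; the discriminant computation alone does not rule out ramification at finite places. Your base-change argument — that $K \otimes_{\bf Q} K \cong K \times K$ renders $Q_{\mathfrak{a}} \otimes K$ hyperbolic, hence $V_A \otimes K$ fully split, hence $C^0(V_A) \otimes_{\bf Q} K \cong M_2(K) \times M_2(K)$, and the factor corresponding to the identity embedding identifies $C^0(V_A) \cong M_2(K)$ — supplies this. (Your one-line reduction "$C^0(V_A) \otimes_K (K \otimes_{\bf Q} K) = C^0(V_A)^{\oplus 2}$" is a hair imprecise since the second factor carries the $\tau$-twisted $K$-structure, but as you note the relevant conclusion uses only the first, untwisted factor.) Similarly in case~(ii), the paper's argument "since the discriminant $d(V_A)=1$, $B$ must be $M_2({\bf Q})$" is not a complete deduction — the discriminant determines only the centre, not the Brauer class of the two quaternion factors — whereas your observation that $Q_{\mathfrak{a}} \perp (-Q_{\mathfrak{a}})$ is hyperbolic for \emph{any} nondegenerate binary form closes the gap immediately. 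Your proof is thus both correct and, in these respects, a cleaner and more complete argument than the one given in the text.
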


\begin{proof} Write $d(V_A)$ in each case to denote the discriminant of the space, computed as the determinant 
$d(V_A) = \det((e_i, e_j)_A)$ of the Gram matrix $((e_i, e_j)_A)_{i, j}$ for any choice of basis $\lbrace e_i \rbrace_{i=1}^4$ of $V_A$. 
For (i), a direct calculation shows that $d(V_A) \equiv d \pmod{(\mathbf{Q}^\times)^2}$ (cf.~\cite[$\S$2.7]{Br}).
Hence $Z(C_{V_A}^0) = \mathbf{Q}(\sqrt d)=K$. By \cite[$\S$ 2.7]{Br}, the even Clifford algebra is isomorphic to $M_2(K)$,
and we obtain the exceptional isomorphism $\operatorname{Spin}(V_A) \cong \operatorname{Res}_{K/\mathbf Q} \operatorname{SL}_2.$
For (ii), a direct calculation shows that $d(V_A) \equiv 1 \pmod{(\mathbf{Q}^\times)^2}$.
Hence, $C_{V_A}^0$ is split and isomorphic to $M_2(\mathbf Q) \oplus M_2(\mathbf Q)$. 
So, $\operatorname{GSpin}(V_A)$ can be identified with a subgroup of $\operatorname{GL}_2^2$.
Now, observe that the second short exact sequence $(\ref{GSpinSES})$ forces 
$\dim \operatorname{GSpin}(V_A) = \dim \operatorname{SO}(V_A) + 1 = \dim \operatorname{SO}(2,2)+1 = 6+1=7$.
We deduce from this constraint on the dimension that $\operatorname{GSpin}(V_A) \cong  \operatorname{GL}_2 \times_{{\bf{G}}_m} \operatorname{GL}_2$. \end{proof}

\begin{corollary}\label{lattices} 

Fix an integer $N \geq 1$. Let $K_0(N)$ denote the compact open subgroup of 
$\operatorname{GL}_2(\widehat{\bf{Z}}) \subset \operatorname{GL}_2({\bf{A}}_f)$ corresponding to the 
congruence subgroup $\Gamma_0(N) = K_0(N) \cap \operatorname{GL}_2({\bf{Q}})$, given by 
\begin{align*} K_0(N) &= \left\lbrace \left( \begin{array}{cc} a & b \\ c & d\end{array} \right) 
\in \operatorname{GL}_2(\widehat{\bf{Z}}) : c \equiv 0 \bmod N \right\rbrace. \end{align*}
Fix $(V_A, Q_A)$ any of the quadratic spaces described in Proposition \ref{spinID} (ii).
Let $U_A =  U_A(N)$ denote the compact open subgroup corresponding to $K_0(N) \times K_0(N)$
under the isomorphism $\operatorname{GSpin}(V_A)({\bf{A}}_f) \cong \operatorname{GL}_2({\bf{A}}_f) \times_{{\bf{G}}_m} \operatorname{GL}_2({\bf{A}}_f)$.
Under the action of $\operatorname{GSpin}(V_A)({\bf{A}}_f)$ on $V_A$ by conjugation, there is a unique lattice $L_A = L_A(N)$ of $V_A$ whose adelization 
$L_A \otimes \widehat{{\bf{Z}}}$ is stabilized by $U_A = U_A(N) \cong K_0(N)^2$.
More explicitly, this lattice is given by $L_A = L_A(N) = N^{-1} \mathfrak{a} \oplus N^{-1} \mathfrak{a}$, with dual lattice
given by $L_A^{\vee} = L_A(N)^{\vee} = \mathfrak{d}_K^{-1} N^{-1} \mathfrak{a} \oplus \mathfrak{d}_K^{-1} N^{-1} \mathfrak{a}$. 
This lattice has level $N = \lbrace \min a \in {\bf{Z}} : a Q_A(\lambda) \in {\bf{Z}} \quad \forall \lambda \in L_A^{\vee} \rbrace$. \end{corollary}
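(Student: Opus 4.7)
The plan is to unwind the exceptional isomorphism $\operatorname{GSpin}(V_A) \cong \operatorname{GL}_2^2$ from Proposition \ref{spinID}(ii), make the vector representation on $V_A = \mathfrak{a}_{\bf{Q}} \oplus \mathfrak{a}_{\bf{Q}}$ explicit in coordinates, and then verify the four claims (existence, uniqueness, the formula for $L_A^{\vee}$, and the level assertion) by direct calculation.

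First, I would use the ${\bf{Z}}$-basis $\{\alpha_{\mathfrak{a}}, z_{\mathfrak{a}}\}$ of $\mathfrak{a}_{\bf{Q}}$ to identify each copy with ${\bf{Q}}^2$, so that the vector representation $\operatorname{GL}_2 \times \operatorname{GL}_2 \to \operatorname{SO}(V_A)$ becomes the respective standard representation of each $\operatorname{GL}_2$-factor on the corresponding ${\bf{Q}}^2$, suitably twisted by the similitude character forced by the signs in $Q_A(z_1, z_2) = Q_{\mathfrak{a}}(z_1) - Q_{\mathfrak{a}}(z_2)$ (so that the form is preserved, not merely its quadratic class). Given this description, existence is a prime-by-prime check: for each finite $p$, the defining off-diagonal constraint $c \equiv 0 \bmod N$ in $K_0(N)_p$ matches precisely with the $N^{-1}$ scaling appearing in $L_A = N^{-1}\mathfrak{a} \oplus N^{-1}\mathfrak{a}$, and a direct computation confirms that $K_0(N)_p \oplus K_0(N)_p$ preserves $L_A \otimes {\bf{Z}}_p$.

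For uniqueness, the block-diagonal product structure of $K_0(N) \oplus K_0(N)$ forces any stabilized adelic lattice to split as a direct sum of two adelic lattices in $\mathfrak{a}_{\bf{Q}}$, each stable under the respective $K_0(N)$-factor. The ${\bf{Q}}$-irreducibility of the standard representation of $\operatorname{GL}_2$ on ${\bf{Q}}^2$, together with the presence of enough diagonal and Hecke-like elements in $K_0(N)$ to act transitively on primitive elements modulo $N$, then pins each summand down to $N^{-1}\mathfrak{a}$ up to a global scalar; volume matching against the index of $K_0(N)$ in $\operatorname{GL}_2(\widehat{{\bf{Z}}})$ fixes that scalar. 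For the dual, I would compute the bilinear form $(v, w) = {\bf{N}}\mathfrak{a}^{-1}[\operatorname{Tr}_{K/{\bf{Q}}}(v_1 w_1^{\tau}) - \operatorname{Tr}_{K/{\bf{Q}}}(v_2 w_2^{\tau})]$ associated to $Q_A$, split the dual calculation across the two factors, and apply the classical trace-duality identity $\mathfrak{b}^{*} = \mathfrak{d}_K^{-1} \mathfrak{b}^{-1}$ for the trace-dual of a fractional ideal together with the relation $\mathfrak{a}\mathfrak{a}^{\tau} = ({\bf{N}}\mathfrak{a})\mathcal{O}_K$; this collapses the computation to a purely ideal-theoretic identity producing $L_A^{\vee} = \mathfrak{d}_K^{-1} N^{-1} \mathfrak{a} \oplus \mathfrak{d}_K^{-1} N^{-1} \mathfrak{a}$. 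The level claim then follows by evaluating $Q_A$ on $L_A^{\vee}$ and using the hypothesis $(N, d_K) = 1$ to exhibit an explicit $\lambda \in L_A^{\vee}$ witnessing that $N$, and no smaller integer, clears the denominators of $Q_A(L_A^{\vee})$.

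The main obstacle is the first step: making the accidental isomorphism of Proposition \ref{spinID}(ii) sufficiently concrete to track how arithmetic subgroups of $\operatorname{GL}_2^2$ act on $V_A = \mathfrak{a}_{\bf{Q}}^{\oplus 2}$ at the level of integral lattices, including the ``twist by $\mathfrak{a}$'' introduced by using an arbitrary class representative rather than $\mathcal{O}_K$. The coprimality hypothesis $(c, d_K N) = 1$ shields us from the most delicate local phenomena at primes dividing both $N$ and the conductor of $\mathfrak{a}$, after which the remaining steps reduce to bookkeeping with fractional ideals and the different $\mathfrak{d}_K$.
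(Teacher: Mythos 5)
Your proposed description of the vector representation is where the argument breaks down, and it is also where your approach diverges most sharply from the paper's. You identify each copy of $\mathfrak{a}_{\bf{Q}}$ with ${\bf{Q}}^2$ and claim that the action of $\operatorname{GSpin}(V_A) \cong \operatorname{GL}_2^2$ is given by the standard representation of each $\operatorname{GL}_2$-factor on the corresponding ${\bf{Q}}^2$, ``suitably twisted by the similitude character.'' This cannot be right. No twist of the standard two-dimensional representation of $\operatorname{GL}_2$ lands inside $\operatorname{GO}$ of a nondegenerate binary quadratic form: $\operatorname{GO}(1,1)$ is a one-dimensional torus times a finite group, whereas $\operatorname{GL}_2$ is four-dimensional, so the standard representation of $\operatorname{GL}_2$ simply does not preserve $Q_{\mathfrak{a}}$ up to similitude. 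A second symptom of the problem is that the uniqueness claim in the corollary would fail under your picture: in the standard representation, $K_0(N)_p$ stabilizes several non-homothetic lattices in ${\bf{Q}}_p^2$ (for instance both ${\bf{Z}}_p^2$ and ${\bf{Z}}_p \oplus p^{v_p(N)}{\bf{Z}}_p$), so there is no unique stabilized lattice, and your proposed irreducibility-plus-transitivity argument for uniqueness cannot close.

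The paper's proof works instead with the Clifford-algebra picture: $V_A$ embeds canonically into $C_{V_A}$, while $\operatorname{GSpin}(V_A)$ sits inside the even subalgebra $C_{V_A}^0 \cong M_2({\bf{Q}}) \oplus M_2({\bf{Q}})$, and the vector representation is the conjugation action $\alpha_x(v) = x v J(x)^{-1}$. Under this action, $U_A$ corresponds to the product of Eichler orders $R(N) \oplus R(N)$, and the lattice $N^{-1}\mathfrak{a} \oplus N^{-1}\mathfrak{a}$ is the unique stabilized one. The conjugation picture, not the standard representation picture, is what makes the existence, uniqueness, and explicit form of $L_A(N)$ all come out. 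On the other hand, your computation of the dual lattice and the level is on the right track: splitting the bilinear form $(v,w)_A = {\bf{N}}\mathfrak{a}^{-1}\big(\operatorname{Tr}_{K/{\bf{Q}}}(v_1 w_1^{\tau}) - \operatorname{Tr}_{K/{\bf{Q}}}(v_2 w_2^{\tau})\big)$ across the two summands, applying trace duality $\mathfrak{b}^{*} = \mathfrak{d}_K^{-1}\mathfrak{b}^{-1}$ together with $\mathfrak{a}\mathfrak{a}^{\tau} = ({\bf{N}}\mathfrak{a})\mathcal{O}_K$, and using $(N, d_K) = 1$ for the level are exactly the right tools, and those parts of the argument would survive once the representation is corrected.
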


\begin{proof} 

Recall we have the canonical embedding $V_A \rightarrow C_{V_A}$, that we can identify 
$\operatorname{GSpin}(V_A)$ with the elements in $C_{V_A}^0$ with Clifford norm in ${\bf{Q}}^{\times}$,
and by Proposition \ref{spinID} that $C_{V_A}^0 \cong M_2({\bf{Q}}) \oplus M_2({\bf{Q}})$. 
Writing $R(N)$ to denote the Eichler order of level $N$ in $M_2({\bf{Z}})$, 
we seek to find the lattice $L_A(N) \subset V_A$ fixed by conjugation by the invertible matrices in $R(N) \oplus R(N)$. 
The conjugation action $g \cdot v = g v g^{-1}$ of $g = (g_1, g_2) \in \operatorname{GSpin}(V_A)$
on $v = (v_1, v_2) \in V_A = \mathfrak{a}_{\bf{Q}} \oplus \mathfrak{a}_{\bf{Q}}$ is given by 
$(g_1, g_2) \cdot (v_1, v_2) = (g_1 v_1 g_1^{-1}, g_2 v_2 g_2^{-1}).$ 
A direct verification shows that the lattice $L_A = L_A(N) = N^{-1} \mathfrak{a} \oplus N^{-1} \mathfrak{a}$ is stabilized under this action,
that the dual lattice is given by $L_A^{\vee} = L_A(N)^{\vee} = \mathfrak{d}_k^{-1} N^{-1} \mathfrak{a} \oplus \mathfrak{d}_k^{-1} N^{-1} \mathfrak{a}$, and the level is $N$. \end{proof}

\begin{remark}[Relation to quadratic basechange liftings.] 

Consider the split quadratic space $V_0 = {\bf{Q}} \oplus {\bf{Q}} \oplus K$ 
with quadratic form $q_0(x, y, \lambda) = {\bf{N}}_{K/{\bf{Q}}}(\lambda) - xy$.
Although we do not use this quadratic space $(V_0, q_0)$ for our main calculations, 
we note that the accidental isomorphism $\operatorname{Spin}(V_0) \cong \operatorname{Res}_{K/{\bf{Q}}}(\operatorname{SL}_2(K))$ 
realizes the quadratic basechange lifting $\Pi = \operatorname{BC}_{K/{\bf{Q}}}(\pi)$ of the cuspidal 
automorphic representation $\pi =\pi(f)$ to $\operatorname{GL}_2({\bf{A}}_K)$ as a theta lift from 
$\operatorname{SL}_2({\bf{A}})$ to $\operatorname{Spin}(V_0)({\bf{A}})$, which after extending to similitudes can 
be viewed as a theta lift from $\operatorname{GL}_2({\bf{A}})$ to $\operatorname{GSpin}(V_0)({\bf{A}})$. 
We refer to \cite[$\S$2-3]{Br} for a classical description of this setup. \end{remark}

\section{Regularized theta lifts and automorphic Green's functions}

We now introduce regularized theta lifts associated with the quadratic spaces $(V_A, Q_A)$ described in Proposition \ref{spinID} (ii) 
above following \cite{Bo, KuBL, BrB, BF, BY}. We shall later compute these along the real geodesic cycle attached to the anisotropic subspace 
$(V_{A, 2}, Q_{A, 2}) = (V_{A,2}, Q_{A}\vert_{V_{A,2}})$ of signature $(1,1)$ defined by $V_{A, 2} := \mathfrak{a}_{\bf{Q}} = \mathfrak{a} \otimes {\bf{Q}}$
and $Q_{A, 2}(\lambda) = Q_{\mathfrak{a}}(\lambda) = {\bf{N}}_{K/{\bf{Q}}}(\lambda)/{\bf{N}} \mathfrak{a}$.
We note that this presents a substantial departure from these works, as these Lorentzian 
spaces of signature $(1,1)$ give rise to nonholomorphic Siegel theta series $\theta_{L_{A,1}}(\tau)$ and an additional term
$I'(0, f_{0,A} \times \xi_0 \theta_{L_{A,1}})$ coming from a regularized Rankin-Selberg-type integral $I(s, f_{0,A} \times \xi_0 \theta_{L_{A,1}})$.
These sums over geodesic cycles allow us to derive new integral presentations for the central derivative values 
$\Lambda'(E/K, \chi, 1) = \Lambda'(1/2, \Pi \otimes \chi) = \Lambda'(1/2, f \times \theta(\chi))$. 

\subsection{Setup} 

Fix a primitive ring class character $\chi$ of $K$ of some conductor $c \in {\bf{Z}}_{\geq 1}$ coprime to $N d_K$, 
which we assume exists. (This is always the case for conductor $c=1$, whence $\chi$ is a class group character). 
Thus, $\chi$ factors through the ring class group $\operatorname{Pic}(\mathcal{O}_c)$. 
Let us for each class $A \in \operatorname{Pic}(\mathcal{O}_c)$ 
fix an integral ideal representative $\mathfrak{a} \subset \mathcal{O}_K$
of $A = [\mathfrak{a}] \in \operatorname{Pic}(\mathcal{O}_c)$. 
We consider the rational quadratic space $(V_A, Q_A)$ of signature $(2, 2)$ defined in Definition \ref{V_A} (ii),
hence with vector space $V_A = \mathfrak{a}_{\bf{Q}} \oplus \mathfrak{a}_{\bf{Q}}$ 
and quadratic form $Q_A(z) = Q_A((z_1, z_2)) = Q_{\mathfrak{a}}(z_1) - Q_{\mathfrak{a}}(z_2)$.

\subsubsection{Exceptional isomorphisms}

Recall that by Proposition \ref{spinID} (ii), we have an exceptional isomorphism 
\begin{align}\label{accidentV_A} \zeta: \operatorname{GSpin}(V_A) \cong  \operatorname{GL}_2 \times_{{\bf{G}}_m} \operatorname{GL}_2 \end{align} 
of algebraic groups over ${\bf{Q}}$. As described in Corollary \ref{lattices}, 
we take $U_A \subset \operatorname{GSpin}(V_A)({\bf{A}}_f)$ to be the compact open subgroup  
$U_A = \prod_{p < \infty} U_{A, p}$, with each local component given by $\zeta(U_{A, p}) \cong K_{0, p}(N) \times K_{0, p}(N)$, where
\begin{align}\label{level} K_{0, p}(N) &= \left\lbrace \left( \begin{array}{cc} a & b \\ c & d \end{array} \right) \in \operatorname{GL}_2({\bf{Z}}_p) 
: c \in N {\bf{Z}}_p \right\rbrace \subset \operatorname{GL}_2({\bf{Z}}_p). \end{align}
Given any integral lattice $L_A \subset V_A$, we write $L_A^{\vee}$ to denote the corresponding dual lattice, 
and $L_A^{\vee}/L_A$ to denote the corresponding finite abelian discriminant group. We shall later 
take $L_A = L_A(N)$ to be the lattice whose adelization is fixed by $U_A$, as described in Corollary \ref{lattices}.

\subsubsection{Weil representations} 

Let $\psi = \otimes_v \psi_{v}$ denote the standard additive character of ${\bf{A}}/{\bf{Q}}$, with archimedean component $\psi_{\infty}(x) = e(x)= \exp(2 \pi i x)$. 
Recall that for each $A \in \operatorname{Pic}(\mathcal{O}_c)$, we have a short exact sequence
\begin{align*} 1 \longrightarrow {\bf{G}}_m \longrightarrow \operatorname{GSpin}(V_A) \longrightarrow \operatorname{SO}(V_A) \longrightarrow 1\end{align*}
of algebraic groups defined over ${\bf{Q}}$. Let $\omega_{L_A}$ denote the corresponding Weil representation
\begin{align*} \omega_{L_A} = \omega_{L_A, \psi}: \operatorname{SL}_2({\bf{A}})  \times \operatorname{GSpin}(V_A)({\bf{A}}) 
\longrightarrow \operatorname{Aut} \left(  \mathcal{S}(V_A({\bf{A}})) \right) \end{align*}
of $\operatorname{SL}_2({\bf{A}}) \times \operatorname{GSpin}(V_A)({\bf{A}}) $ acting on the space $\mathcal{S}(V_A({\bf{A}}))$
of Schwartz-Bruhat functions on $V_A({\bf{A}})$. 
The action of $\operatorname{SL}_2({\bf{A}})$ on $\mathcal{S}(V_A({\bf{A}}))$ commutes with that of $\operatorname{GSpin}(V_A)({\bf{A}})$.
We write $\omega_{L_A}(h) \varphi (x) = \varphi(h^{-1} x)$ for $h \in \operatorname{GSpin}(V_A)({\bf{A}})$ and $\varphi \in \mathcal{S}(V_A({\bf{A}}))$ to denote the latter action. 

\subsubsection{Subspaces of Schwartz functions}

Let $\mathcal{S}_{L_A} \subset \mathcal{S}(V_A({\bf{A}}_f))$ 
denote the subspace of Schwartz functions with support on $\widehat{L}_A^{\vee} = L_A^{\vee} \otimes \widehat{ {\bf{Z}} }$ 
which are constant on cosets of $\widehat{L}_A = L_A \otimes \widehat{ {\bf{Z}} }$.
Note that $\mathcal{S}_{L_A}$ admits a basis of characteristic functions,
\begin{align}\label{latticebasis} \mathcal{S}_{L_A} 
&= \bigoplus_{\mu \in L_A^{\vee}/L_A} {\bf{C}} \cdot {\bf{1}}_{\mu} \subset \mathcal{S}(V_A({\bf{A}}_f)),
\quad {\bf{1}}_{\mu} := \operatorname{char} \left( \mu + \widehat{L}_A \right).\end{align}
This space $\mathcal{S}_{L_A}$ is stable under the action of $\operatorname{SL}_2({\bf{Z}})$ through the Weil representation $\omega_{L_A}$. 
Moreover, the space of Schwartz functions $\mathcal{S}(V_A({\bf{A}}_f))$ can be expressed as the direct limit $\varinjlim_{L_A} \mathcal{S}_{L_A}$ of these subspaces.

\subsubsection{Anisotropic subspaces}

For each of the quadratic spaces $(V_A, Q_A)$ described in Definition \ref{V_A} (ii) above, 
we consider the anisotropic subspace $(V_{A, 2}, Q_{A, 2}) = (V_{A,2}, Q_{A}\vert_{V_{A,2}})$ 
of signature $(1,1)$ defined by the fractional ideal $V_{A, 2} := \mathfrak{a}_{\bf{Q}} = \mathfrak{a} \otimes {\bf{Q}}$
and norm form $Q_{A, 2}(\lambda) = Q_{\mathfrak{a}} = {\bf{N}}_{K/{\bf{Q}}}(\lambda)/{\bf{N}} \mathfrak{a}$. 
We also consider the anisotropic subspace $(V_{A, 1}, Q_{A, 1}) = (V_{A,1}, Q_{A}\vert_{V_{A,1}})$ 
of signature $(1,1)$ defined by $V_{A, 1} := \mathfrak{a}_{\bf{Q}}$ and negative norm form 
$Q_{A, 1}(x, y) = - Q_{\mathfrak{a}}$. We write $(V_{A, j}, Q_{A, j})$ for $j=1,2$ to denote either of these spaces.

Writing $K^1 \subset K^{\times}$ to denote the elements of norm one, it is easy to see that 
$\operatorname{Spin}(V_{A, j}) \cong \operatorname{SO}(V_{A, j}) \cong K^1$ for each of $j=1,2$.
Writing $K_{ {\bf{A}} }^1$ to denote the adelic points, we have the Hilbert exact sequence 
\begin{align*}\begin{CD} 1 @>>> {\bf{A}}^{\times} @>>> {\bf{A}}_K^{\times} @>>> K_{\bf{A}}^1 @>>> 1. \end{CD} \end{align*}
In particular, we obtain natural identifications for the corresponding adelic quotient spaces
\begin{align*} \operatorname{Spin}(V_{A, j})({\bf{Q}}) \backslash \operatorname{Spin}(V_{A, j})({\bf{A}}) \cong 
\operatorname{SO}(V_{A, j})({\bf{Q}}) \backslash \operatorname{SO}(V_{A, j})({\bf{A}}) \cong {\bf{A}}_K^{\times}/ {\bf{A}}^{\times} K^{\times}. \end{align*} 
Hence, we can view the ring class character $\chi: {\bf{A}}_K^{\times}/ {\bf{A}}^{\times} K^{\times} \rightarrow {\bf{C}}^{\times}$ as an automorphic
representation of $\operatorname{SO}(V_{A, j})({\bf{A}})$. In a similar way, we have natural identifications 
\begin{align*} \operatorname{GSpin}(V_{A, j})({\bf{Q}}) \backslash \operatorname{GSpin}(V_{A, j})({\bf{A}}) \cong 
\operatorname{GO}(V_{A, j})({\bf{Q}})\backslash \operatorname{GO}(V_{A, j})({\bf{A}}) \cong {\bf{A}}_K^{\times}/K^{\times}. \end{align*} 
Here, strictly speaking, we fix one of the two connected components $\operatorname{GO}^{\pm}(V_{A, j})$ of $\operatorname{GO}(V_{A,j})$ so that
\begin{align*}\operatorname{GSpin}(V_{A, j})({\bf{Q}}) \backslash \operatorname{GSpin}(V_{A, j})({\bf{A}}) \cong 
\operatorname{GO}^{\pm}(V_{A, j})({\bf{Q}}) \backslash \operatorname{GO}^{\pm}(V_{A, j})({\bf{A}}) \cong {\bf{A}}_K^{\times}/K^{\times}. \end{align*} 
We refer to the discussion in \cite[Theorem 2.3.3]{Po}.
 
\subsection{Hermitian symmetric domains}

The symmetric spaces associated to each quadratic space $(V_A, Q_A)$ are hermitian symmetric domains, so have a complex structure. We have the following equivalent realizations. 
    
\subsubsection{The Grassmannian model}    
     
Recall we let $D(V_A)$ denote the Grassmannian of oriented $2$-planes of $V_A({\bf{R}})$ on which $Q_A$ is negative definite.

\subsubsection{The projective model}

Note that $D(V_A)$ can be identified with the complex surface  
\begin{align*} Q(V_A) &= \left\lbrace w \in V_A({\bf{C}}) : (w, w)_A = 0, (w, \overline{w})_A < 0 \right\rbrace / {\bf{C}}^{\times} \subset {\bf{P}} (V_A({\bf{C}})) \end{align*}
via the map 
\begin{align}\label{quadric} D^{\pm}(V_A) \longrightarrow Q(V_A), \quad z \longmapsto v_1 - i v_2 = w, \end{align}
for $v_1, v_2$ a properly-oriented standard basis of $D^{\pm}(V_A)$ with $(v_1, v_1)_A = (v_2, v_2)_A =-1$ 
and $(v_1, v_2)_A = 0$. We refer to this identification $D^{\pm}(V_A) \cong Q(V_A)$ as the {\it{projective model}}. 

\subsubsection{The tube domain model}

Fix a Witt decomposition $V_A({\bf{R}}) = V_{A, 0} + {\bf{R}} \cdot e + {\bf{R}} \cdot f$,
with $e$ and $f$ chosen so that $(e, e)_A = (f, f)_A= 0$ and $(e, f)_A =1$, 
and $C(V_A) = \left\lbrace y \in V_{A, 0}: (y, y)_A < 0 \right\rbrace$ its negative cone. 
We can then identify $D^{\pm}(V_A) \cong Q(V_A)$ with the corresponding tube domain
\begin{align*}\mathcal{H}(V_A) 
&:= \left\lbrace z \in V_{A, 0}({\bf{C}}) : \Im(z_0) \in C(V_A)  \right\rbrace \cong \mathfrak{H}^{\pm} = \mathfrak{H}^{+} \coprod \mathfrak{H}^{-} \end{align*}
via the map $\mathcal{H}(V_A) \longrightarrow V_A({\bf{C}})$ sending $ z \longmapsto w(z) := z + e - q_A(z) f$ 
composed with the projection to $Q(V_A)$. We call $\mathcal{H}(V_A) \subset V_{A, 0}({\bf{C}}) \cong {\bf{C}}^2$ the {\it{tube domain model}}. 

\subsection{Spin Shimura varieties}
 
We now describe the Shimura varieties associated with each group $\operatorname{GSpin}(V_A)$.
Here, we take $U_A \subset \operatorname{GSpin}(V_A)({\bf{A}}_f)$ to be any compact open subgroup. 
 
\subsubsection{Orbifolds} 

Consider the Shimura varieties $X_{U_A} = \operatorname{Sh}_{U_A}(\operatorname{GSpin}(V_A), D(V_A))$ with complex points  
\begin{align*} X_{U_A}({\bf{C}}) 
&= \operatorname{GSpin}(V_A)({\bf{Q}}) \backslash \left( D(V_A) \times \operatorname{GSpin}(V_A) ( {\bf{A}}_f ) / U_A \right) \\
&\cong \operatorname{GSpin}(V_A)({\bf{Q}}) \backslash \left( \mathfrak{H}^{\pm} \times \mathfrak{H}^{\pm} \times \operatorname{GSpin}(V_A) ( {\bf{A}}_f ) / U_A \right). \end{align*}
Note that this is a surface defined over ${\bf{Q}}$. 
Via the exceptional isomorphism $(\ref{accidentV_A})$ with choice of level $(\ref{level})$, we obtain the identification
\begin{align*} X_{U_A}({\bf{C}}) \cong  \operatorname{GL}_2({\bf{Q}}) \times_{{\bf{G}}_m} \operatorname{GL}_2({\bf{Q}}) \backslash \left( \mathfrak{H}^{\pm} \times \mathfrak{H}^{\pm} \times 
 \operatorname{GL}_2({\bf{A}}_f) \times_{{\bf{G}}_m} \operatorname{GL}_2({\bf{A}}_f) / \zeta(U_A) \right) \end{align*}
with the two-fold product $Y_0(N) \times Y_0(N)$ of the noncompactified modular curve $Y_0(N) = \Gamma_0(N) \backslash \mathfrak{H}$. 

\subsubsection{Decompositions}

Fix a set of representatives $h_j \in \operatorname{GSpin}(V_A)({\bf{Q}}) \backslash \operatorname{GSpin}(V_A)({\bf{A}}_f) /U_A$ so that
\begin{align}\label{GS-decomp} \operatorname{GSpin}(V_A)({\bf{A}}) 
&= \coprod_j \operatorname{GSpin}(V_A)({\bf{Q}}) \operatorname{GSpin}(V_A)({\bf{R}})^0 h_j U_A, \end{align}
where $\operatorname{GSpin}(V_A)({\bf{R}})^0$ denotes the identity component of 
$\operatorname{GSpin}(V_A)({\bf{R}}) \cong \operatorname{GSpin}(2,2)$. 
This gives us the corresponding decomposition of the Shimura variety as 
\begin{equation}\begin{aligned}\label{Sh-decomp} 
\operatorname{Sh}_{U_A}(\operatorname{GSpin}(V_A), D(V_A)) 
&= \coprod_j X_{A, j}, \quad \text{where} ~~ X_{A,j} = \Gamma_j \backslash D^{\pm}(V_A) \end{aligned}\end{equation}
for the arithmetic subgroup  
$ \Gamma_{A, j} = \operatorname{GSpin}(V_A)({\bf{Q}}) \cap \left( \operatorname{GSpin}(V_A)({\bf{R}})^0 h_j U h_j^{-1} \right)$.
Choosing $U_A$ according to $(\ref{level})$ via $(\ref{accidentV_A})$, this simply recovers the 
decomposition $X_{U_A}=\operatorname{Sh}_{U_A}(\operatorname{GSpin}(V_A), D(V_A)) \cong Y_0(N) \times Y_0(N)$.

\subsubsection{Special divisors} 

We now consider the following special divisors on $X_{U_A} = \operatorname{Sh}_{U_A}(D(V_A), \operatorname{GSpin}(V_A))$.
Given a vector $x \in V_A({\bf{Q}})$ with $Q_A(x) > 0$, let $V_{A, x}:= x^{\perp} \subset V_A$ denote the orthogonal complement, with 
\begin{align*} D(V_A)_x  = \lbrace z \in D^{\pm}(V_A): x \perp z \rbrace. \end{align*}
Let $\operatorname{GSpin}(V_{A, x})({\bf{A}}_f)$ denote the stabilizer in $\operatorname{GSpin}(V_A)({\bf{A}}_f)$ of $x$.
We have a natural map defined on $h \in \operatorname{GSpin}(V_A)({\bf{A}}_f)$ by 
\begin{equation}\begin{aligned}\label{natural-x}
\operatorname{GSpin}(V_{A, x})({\bf{Q}}) \backslash D(V_A)_x \times \operatorname{GSpin}(V_{A, x})({\bf{A}}_f) / 
\left( \operatorname{GSpin}(V_{A, x})({\bf{A}}_f) \cap h U_A h^{-1}  \right)
&\longrightarrow X_{U_A} \\ [z, h_1] &\longmapsto [z, h_1 h]. \end{aligned}\end{equation}

\begin{definition} Given $x \in V_A({\bf{Q}})$ with $Q_A(x) > 0$ and $h \in \operatorname{GSpin}(V_A)({\bf{A}}_f)$, let
$Z_A(x,h) = Z_A(x, h, U_A)$ denote the image of the map $(\ref{natural-x})$. Here, we drop the compact open subgroup 
$U_A \subset \operatorname{GSpin}(V_A)({\bf{A}}_f)$ from the notation when the context is clear. \end{definition} 

This image $Z_A(x, h) = Z_A(x, h, U_A)$ determines a special codimension-$1$ cycle on $X_{U_A}$ defined over ${\bf{Q}}$. 
Let us for a given $m \in {\bf{Q}}_{>0}$ write $\Omega_{A, m}({\bf{Q}})$ to denote the hyperboloid  
\begin{align*} \Omega_{A, m}({\bf{Q}}) &= \left\lbrace x \in V_A : Q_A(x) = m \right\rbrace. \end{align*}
If $\Omega_{A, m}({\bf{Q}})$ is not empty, we fix a point $x_0 \in \Omega_{A, m}({\bf{Q}})$.
The corresponding finite adelic points $\Omega_{A, m}({\bf{A}}_f)$ determine a closed subgroup of $V_A({\bf{A}}_f)$. 
Given a Schwartz function $\varphi_f = \otimes_{v < \infty} \varphi_v \in \mathcal{S}(V_A ({\bf{A}}_f))^{U_A}$, we write
\begin{align}\label{support} \operatorname{supp}(\varphi_f) \cap \Omega_{A, m}({\bf{A}}_f) 
&= \coprod_r U_A \cdot \zeta_r^{-1} \cdot x_0 \end{align} 
for some finite set of representatives $\zeta_r \in \operatorname{GSpin}(V_A)({\bf{A}}_f)$. 
Via $(\ref{support})$, we define the analytic divisor
\begin{align}\label{AD} Z_A(\varphi_f, m, U_A) &= \sum_r \varphi_f(\zeta_r^{-1} \cdot x_0) Z_A(x_0, \zeta_r, U_A). \end{align}

\begin{definition}\label{specialdivisor} 

Given a positive rational number $m >0$ for which $\Omega_{A, m}({\bf{Q}}) \neq \emptyset$ and a 
coset $\mu \in L^{\vee}_A/L_A$ with corresponding characteristic function ${\bf{1}}_{\mu}$, 
we write $Z_A(\mu, m) = Z_A( {\bf{1}}_{\mu}, m) = Z_A( {\bf{1}}_{\mu}, m, U_A)$ for the corresponding analytic divisor on
$X_{U_A}= \operatorname{Sh}_{U_A}(\operatorname{GSpin}(V_A), D(V_A)) \cong Y_0(N) \times Y_0(N)$. \end{definition}

\subsubsection{Relation to Hirzebruch-Zagier divisors}\label{HiZa}

Suppose we fix the level $U_A \subset \operatorname{GSpin}(V_A)({\bf{A}}_f)$ as in Corollary \ref{lattices}.
The special divisors $Z_A(\mu, m)$ of Definition \ref{specialdivisor} are then sums of Hirzebruch-Zagier divisors on the product of modular curves
$X_A \cong Y_0(N) \times Y_0(N)$. More explicitly, we have
\begin{equation*}\begin{aligned} &Z_A(\mu, m)({\bf{C}}) \cong \Gamma_0(N)^2 \Big\backslash \coprod\limits_{ x \in \mu + L_A \atop Q_A(x) = m } D(V_A)_x
= \Gamma_0(N)^2 \Big\backslash \coprod\limits_{ x \in \mu + L_A \atop Q_A(x) = m } \left\lbrace z \in D^{\pm}(V_A): (z, x)_A =0 \right\rbrace \\
&\cong  \Gamma_0(N)^2 \Big\backslash \coprod\limits_{ x \in \mu + L_A \atop Q_A(x) = m } \left\lbrace z=(z_1, z_2) \in \mathfrak{H}^{\pm}: (z, x)_A=0 \right\rbrace
 \subset Y_0(N)({\bf{C}}) \times Y_0(N)({\bf{C}}). \end{aligned}\end{equation*}
Note that these special divisors $Z_A(\mu, m)$ can be viewed as embeddings of modular curves into the product $Y_0(N) \times Y_0(N)$.
Indeed, each point in $\Omega_{A, \mu, m}({\bf{Q}}) = \lbrace x \in \mu + L_A: Q_A(x) = m \rbrace$
gives rise to a rational quadratic subspace $W_A = x^{\perp} \subset V_A$ of signature $(1,2)$, with general spin group
$\operatorname{GSpin}(W_A) \subset \operatorname{GSpin}(V_A)$, level $\overline{U}_A = U_A \cap \operatorname{GSpin}(W_A)({\bf{A}}_f)$,
and Grassmannian $D(W_A) \subset D(V_A)$. This determines a modular curve
\begin{align*} C_{\overline{U}_A}:= \operatorname{Sh}_{\overline{U}_A}(D(W_A), \operatorname{GSpin}(W_A), )
\longrightarrow X_{U_A}  \cong Y_0(N) \times Y_0(N). \end{align*}

\begin{remark} Recall the Hirzebruch-Zagier divisor $T_m = T_m(L_A)$ of discriminant  $m$ for $L_A \subset V_A$ is defined by 
\begin{align}\label{HZ} T_m = T_m(L_A) 
&= \sum\limits_{ { \lambda \in L_A^{\vee}/ \lbrace \pm 1 \rbrace \atop Q_A(\lambda) = \frac{m}{\Delta}} }
\left\lbrace z = (z_1, z_2) \in \mathfrak{H}^2: Q_A(z+ \lambda) - Q_A(z) - Q_A(\lambda) = 0 \right\rbrace, \end{align}
where $\Delta = c^2 d_K$ denotes the discriminant of the order $\mathcal{O}_c = {\bf{Z}} + c \mathcal{O}_K$. Hence, we find the relation 
\begin{align*} T_m = T_m(L_A) &= \sum\limits_{ \mu \in L_A^{\vee} / L_A } Z_A(\mu, m/\Delta). \end{align*}
We refer to \cite[Definition 2.27]{Br}, \cite[$\S$3]{HY}, and \cite[$\S$8]{BY} for more background on these Hirzebruch-Zagier divisors. \end{remark}
  
\subsubsection{Arithmetic automorphic forms}

Let $\mathcal{L}_{D(V_A)} = \mathcal{L}_{D^{\pm}(V_A)}$ denote the restriction to $D (V_A) \cong Q(V_A)$ of the tautological bundle on ${\bf{P}}(V_A({\bf{C}}))$. 
The natural action of the orthogonal group $\operatorname{O}(V_A)({\bf{R}})$ on $V_A({\bf{C}})$ induces one of the connected 
component of the identity $\operatorname{GSpin}(V_A)({\bf{R}})^{0}$ of $\operatorname{GSpin}(V_A)({\bf{R}})$
on $\mathcal{L}_{D(V_A)}$. Hence, there is a holomorphic line bundle 
\begin{align*} \mathcal{L}_A= \operatorname{GSpin}(V_A)({\bf{Q}}) 
\backslash \left( \mathcal{L}_{D(V_A)} \times \operatorname{GSpin}(V_A)({\bf{A}}_f) / U_A \right)\longrightarrow X_{U_A}. \end{align*}
Note that $\mathcal{L}_A$ has a canonical model over ${\bf{Q}}$ by \cite{Ha}.  
We define a hermitian metric $h_{ \mathcal{L}_{D(V_A)} }$ on $\mathcal{L}_{D(V_A)}$ by 
\begin{align*} h_{ \mathcal{L}_{ D(V_A) }}(w_1, w_2)_A &:= \frac{1}{2} \cdot (w_1, \overline{w}_2)_A. \end{align*}
This metric is invariant under the action by $\operatorname{O}(V_A)({\bf{R}})$, and hence descends to $\mathcal{L}_A$. 
The map $z \mapsto w(z)$ used to identify $D(V_A) \cong \mathcal{H}_{\pm}(V_A) \cong \mathfrak{H}^2$
can be viewed as a nowhere vanishing section of $\mathcal{L}_{D(V_A)}$ of norm 
\begin{align*} \vert \vert w( z) \vert \vert_A = - \frac{1}{2} \cdot (w(z), \overline{w}(z))_A = - (y, y)_A =: \vert y \vert^2_A. \end{align*}
For $h \in \operatorname{GSpin}(V_A)({\bf{R}})$, we have that $h \cdot w(z) = w(hz) \cdot j(h, z)$ for a holomorphic automorphy factor 
\begin{align*} j: \operatorname{GSpin}(V_A)({\bf{R}}) \times D(V_A) \longrightarrow {\bf{C}}^{\times}. \end{align*} 
In this way, holomorphic sections of $\mathcal{L}_A^{\otimes l}$ for $l \in \frac{1}{2}{\bf{Z}}$ can be viewed as holomorphic functions 
\begin{align*} \Psi: D(V_A) \times \operatorname{GSpin}(V_A)({\bf{A}}_f) \longrightarrow {\bf{C}} \end{align*} 
of $z \in D(V_A)$ and $h \in \operatorname{GSpin}(V_A)({\bf{A}}_f)$ satisfying the transformation properties

\begin{itemize}

\item $\Psi(z, h u) = \Psi(z, h)$ for all $u \in U_A$, 

\item $\Psi(\gamma z, \gamma h) = j(\gamma, z)^l \cdot \Psi(z, h)$ for all $\gamma \in \operatorname{GSpin}(V_A)({\bf{Q}})$.

\end{itemize} 

We define the Petersson norm of a section $(z, h) \rightarrow \Psi(z, h) \cdot w(z)^{\otimes l}$ to be  
$\vert \vert \Psi(z, h) \vert \vert_A^2 = \vert \Psi(z, h) \vert_A^2 \cdot \vert y \vert_A^{2l}$.

\subsection{Regularized theta lifts} 

We now describe the regularized theta lifts for the spaces $(V_A, Q_A)$.

\subsubsection{Gaussian functions}

Given $z \in D(V_A) = D^{\pm}(V_A)$, let $\operatorname{pr}_z : V_A({\bf{R}}) \rightarrow z$ 
denote the projection, whose kernel defines the orthogonal complement $z^{\perp} := \ker(\operatorname{pr}_z)$. 
Given $x \in V_A({\bf{R}})$, we then define the resultant
\begin{align*} R(x, z)_A &:= - \left( \operatorname{pr}_z(x), \operatorname{pr}_z(x) \right) = \left\vert  (x, w(z))_A \right\vert_A^2 \cdot \vert y \vert_A^2.\end{align*}
Using this resultant, we can associate to a hyperplane $z \in D(V_A)$ and vector $x \in V_A({\bf{R}})$ a majorant
\begin{align*} \left( x, x \right)_{A, z} &:= (x, x)_A + 2 \cdot R(x, z)_A. \end{align*}
Writing $\mathcal{C}^{\infty}(D(V_A))$ to denote the space of smooth functions on $D(V_A)$,
we use this majorant to define a Gaussian function 
$\varphi_{\infty}(x, z) \in \mathcal{S}(V_A( {\bf{R}} ) ) \otimes \mathcal{C}^{\infty}(D(V_A))$ by the rule
\begin{align*} \varphi_{\infty}(x, z) &:= \exp \left( - \pi \cdot \left(x, x \right)_{A, z} \right). \end{align*}
Note that $\varphi_{\infty}(h x, h z) = \varphi_{\infty}(x, z)$ for all $h \in \operatorname{GSpin}(V_A)({\bf{R}})$,
and has weight $0$ under the action $\operatorname{SO}_2({\bf{R}})$. 

\subsubsection{Theta kernels}

Given $z \in D(V_A)$, $g \in \operatorname{SL}_2({\bf{A}})$, and $h_f \in \operatorname{GSpin}(V_A)({\bf{A}}_f)$,
let $\theta_{L_A}^{\star}$ denote the linear functional on $\varphi_f \in \mathcal{S}(V_A({\bf{A}}_f))$ defined by
\begin{equation}\begin{aligned}\label{functional} \varphi_f \longmapsto \theta_{L_A}^{\star}(g, z, h_f; \varphi_f ) 
&:= \sum\limits_{x \in V_A({\bf{Q}})} \omega_{L_A}(g) \left( \varphi_{\infty}(\cdot, z) \otimes \omega_{L_A}(h_f) \varphi_f \right) (x) \\
&=  \sum\limits_{x \in V_A({\bf{Q}})} \omega_{L_A}(g,1) \left( \varphi_{\infty}(\cdot, z) \otimes \omega_{L_A}(1,h_f) \varphi_f \right) (x). \end{aligned}\end{equation}
It is easy to see that for all $\gamma \in \operatorname{GSpin}(V_A)({\bf{Q}})$, we have
\begin{align*} \theta_{L_A}^{\star}(g, \gamma z, \gamma h_f; \varphi_f) &= \theta_{L_A}^{\star}(g, z, h_f; \varphi_f). \end{align*}
By Poisson summation (see \cite{We, KuBL}), we have for all $\gamma \in \operatorname{SL}_2({\bf{Q}})$ that 
\begin{align*} \theta_{L_A}^{\star}(\gamma g, z,  h_f; \varphi_f) &= \theta_{L_A}^{\star}(g, z, h_f; \varphi_f). \end{align*}
Using properties of $\omega_{L_A}$, we can also see that for any $g' \in \operatorname{SL}_2({\bf{A}})$ and $h_f' \in \operatorname{GSpin}(V_A)({\bf{A}}_f)$
\begin{align}\label{1.23} \theta_{L_A}^{\star}(gg', z,  h_f h_f'; \varphi_f) 
&= \theta_{L_A}^{\star}(g, z, h_f; \omega_{L_A}(g', h_f') \varphi_f). \end{align}
Hence for any compact open subgroup $U_A \subset \operatorname{GSpin}(V_A)({\bf{A}}_f)$ and $\varphi_f \in \mathcal{S}(V_A({\bf{A}}_f))^U$, the functional 
\begin{align*} (z, h_f) &\longmapsto \theta_{L_A}^{\star}(g, z, h_f; \varphi_f) \end{align*} 
on $(z, h_f) \in D(V_A) \times \operatorname{GSpin}(V_A)({\bf{A}}_f)$ descends to a function on $X_{U_A}$:
\begin{align*} \theta_{L_A}^{\star}: X_{U_A} \times \operatorname{SL}_2({\bf{Q}}) \backslash \operatorname{SL}_2({\bf{A}})
&\longrightarrow \left( \mathcal{S}(V_A({\bf{A}}_f))^{U_A}\right)^{\vee}. \end{align*}


\subsubsection{Regularized theta lifts}

Let \begin{align*} \phi: \operatorname{SL}_2({\bf{Q}}) \backslash \operatorname{SL}_2({\bf{A}}) 
&\longrightarrow \mathcal{S}(V_A({\bf{A}}_f))^{U_A} \end{align*}
be any function such that for each $g \in \operatorname{SL}_2({\bf{A}})$, $k_{\infty} \in \operatorname{SO}_2({\bf{R}})$, and $k \in \mathcal{K}$,
\begin{align*} \phi(g k k_{\infty}) &= \omega_{L_A}(k)^{-1} \cdot \phi(g). \end{align*}
It is then easy to check that the ${\bf{C}}$-linear pairing $\left\{ \cdot, \cdot \right\} $ defined as a function on $g \in \operatorname{SL}_2({\bf{A}})$ by the rule
\begin{align*} \left\{ \phi(g), \theta_{L_A}^{\star}(z, h_f, g) \right\} &:= \theta_{ L_A }^{\star}(z, h_f, g; \phi(g)) \end{align*}
is both left $\operatorname{SL}_2({\bf{Q}})$-invariant and right $\mathcal{K} \operatorname{SO}_2({\bf{R}})$-invariant. 
We can then consider the regularized theta lift
\begin{align*}\Phi(\phi, z, h_f)  &= \int_{ \mathcal{F} }^{\star} \left\{ \phi(g), \theta_{L_A}^{\star}(g, z, h_f) \right \} dg
= \int_{ \mathcal{F}}^{\star} \theta_{L_A}^{\star}(g, z, h_f;  \phi(g)) dg, \end{align*}
as a function on $(z, h) \in X_{U_A}$. To describe these more explicitly, we descend via Iwasawa decomposition 
Hence, fixing the standard fundamental domain 
$\mathcal{F} = \lbrace \tau = u + iv \in \mathfrak{H}: \vert \Re(\tau) \vert \leq 1/2, \tau \overline{\tau} \geq 1 \rbrace$ 
for the action of $\operatorname{SL}_2({\bf{Z}})$ on $\mathfrak{H}$, each adelic matrix 
$g \in \operatorname{SL}_2({\bf{A}})$ can be expressed uniquely as a product 
\begin{align}\label{UDSL2} g &= \gamma \cdot \left( \begin{array}{cc} 1 & u \\~& 1 \end{array} \right) 
\cdot \left( \begin{array}{cc} v^{\frac{1}{2}} & ~~\\~& v^{- \frac{1}{2}}\end{array} \right) \cdot k \end{align}
for some $\gamma \in \operatorname{SL}_2({\bf{Q}})$, $\tau = u+ i v \in \mathcal{F}$, and $k \in \operatorname{SO}_2({\bf{R}})$. 
Taking the decomposition $(\ref{UDSL2})$ for granted, 
let us define for a given $g \in \operatorname{SL}_2({\bf{A}})$ the corresponding mirabolic matrix 
\begin{align*} g_{\tau} 
&:= \left( \begin{array}{cc} 1 & u \\ ~& 1\end{array} \right) \left( \begin{array}{cc} v^{\frac{1}{2}} & ~ \\~& v^{- \frac{1}{2}}\end{array} \right). \end{align*}
We define the Siegel theta series $\theta_{L_A}(\tau, z, h)$ on $\tau = u + iv \in \mathfrak{H}$, 
$z \in D(V_A)$, and $h \in \operatorname{GSpin}(V_A)({\bf{A}}_f)$ by
\begin{align}\label{classicalsiegeltheta} \theta_{L_A}(\tau, z, h) 
&= \sum\limits_{\mu \in L_A^{\vee}/L_A} \theta_{L_A, \mu}(\tau, z, h) {\bf{1}}_{\mu}, 
\quad \theta_{L_A, \mu}(\tau, z, h) = \theta_{L_A}^{\star}(g_{\tau}, z, h; {\bf{1}}_{\mu}).\end{align}
Given a weight-zero $L^2$-automorphic form $\phi$ on $\operatorname{SL}_2({\bf{Q}}) \backslash \operatorname{SL}_2({\bf{A}})$,
let $f(\tau) := \phi(g_{\tau})$ to denote the corresponding classical weight-zero Maass form on $\tau = u + i v \in \mathfrak{H}$. 
Writing $\mathcal{F}$ again to denote the standard fundamental domain for the action of $\operatorname{SL}_2({\bf{Z}})$ on $\mathfrak{H}$, 
we define the regularized integral as above
\begin{align*} \Phi(f, z, h) = \int_{\mathcal{F}}^{\star}\left( f(\tau), \theta_{L_A}^{\star}(g_{\tau}, z, h_f) \right) d \mu(\tau) 
&= \operatorname{CT}_{s=0} \left( \varinjlim_T \int_{\mathcal{F}_T} \left\{ f(\tau), \theta_{L_A}^{\star}(g_{\tau}z, h_f) \right \}  v^{-s} d \mu(\tau) \right) \\
&= \operatorname{CT}_{s=0} \left( \varinjlim_T \int_{\mathcal{F}_T} \theta_{L_A}^{\star}(g_{\tau}, z, h_f; f(\tau)) v^{-s} d \mu(\tau) \right) \\
&= \operatorname{CT}_{s=0} \left( \varinjlim_{T\rightarrow \infty} \int\limits_{\mathcal{F}_T} \langle \langle f(\tau), \theta_{L_A}(\tau, z, h) \rangle \rangle v^{-s} d \mu(\tau) \right). \end{align*}
Again, we write $d \mu(\tau) = du dv/v^2$ for the Poincar\'e measure, $\mathcal{F}_T = \lbrace \tau = u+iv \in \mathcal{F}: v \leq T \rbrace$ for the truncated fundamental domain,
and $\operatorname{CT}_{s=0} F(s)$ for the constant term in the Laurent series around $s=0$ of $F(s)$.

\subsubsection{Harmonic weak Maass forms}\label{HWMF}

Suppose $l \in \frac{1}{2}{\bf{Z}}$ is any half-integer weight. (We shall later take $l=0$).
Let $\vert_{l, \omega_{L_A}}$ denote the Petersson weight $l$ operator with respect to $\omega_{L_A}$,  
defined on a function $f: \mathfrak{H}\rightarrow {\bf{C}}$ by   
\begin{align*} f\vert_{l, \omega_{L_A}}(\gamma(\tau)) 
&= (c \tau + d)^l \cdot \omega_{L_A}(\gamma) \cdot f(\tau) 
\quad \text{ for all $\gamma = \left(\begin{array}{cc} a & b \\ c & d\end{array} \right) \in \operatorname{SL}_2({\bf{Z}})$}. \end{align*}
Let $\Delta_l$ denote the hyperbolic Laplacian of weight $l$, defined for $\tau = u + iv \in \mathfrak{H}$ by 
\begin{align*} \Delta_l  
&:= - v^2 \left( \frac{\partial^2}{\partial u^2} + \frac{\partial^2}{\partial v^2} \right) + i l \left( \frac{\partial}{\partial u} + i \frac{\partial}{\partial v}\right). \end{align*}
Note that this Laplacian can be expressed in terms of the respective weight $l$ 
Maass weight raising and lowering operators $R_l$ and $L_l$ 
as $- \Delta_l = L_{l+2} R_l + l = R_{l-2} L_l,$ where 
\begin{align}\label{Rl} R_l &= 2 i \cdot \frac{\partial}{ \partial \tau} + l \cdot v^{-1} \end{align}
denotes the Maass weight raising operator of weight $l$ (which raises the weight by $2$), 
and 
\begin{align}\label{Ll}  L_l &= - 2 i v^2 \cdot \frac{\partial}{ \partial \overline{\tau}} \end{align}
denotes the Maass lowering operator (which lowers the weight $l$ by $2$).
 
\begin{definition} Fix a half-integer weight $l \in \frac{1}{2}{\bf{Z}}$ with $l \leq 1$, and an integral lattice $L_A \subset V_A$.
Let $\mathcal{S}_{L_A} \subset \mathcal{S}(V_A ({\bf{A}}))$ denote the subspace of Schwartz-Bruhat functions supported on $L_A^{\vee} \otimes \widehat{\bf{Z}}$
but trivial on $L_A \otimes \widehat{\bf{Z}}$. A twice differentiable function $f: \mathfrak{H} \longrightarrow \mathcal{S}_{L_A}$ is a said to be a
{\it{harmonic weak Maass form of weight $l$ with respect to $\Gamma = \operatorname{SL}_2({\bf{Z}})$ and representation $\omega_{L_A}$}} if it satisfies the following conditions. \\ 

\begin{itemize}

\item[(i)] The function is invariant under the Petersson weight-$l$ operator: $f \vert_{l, \omega_{L_A}} \gamma = f$ for all $\gamma \in \Gamma$. \\

\item[(ii)] There exists an $\mathcal{S}_{L_A}$-valued Fourier polynomial 
\begin{align*} P_f(\tau) &= \sum\limits_{\mu \in L_A^{\vee}/L_A} 
\sum_{m \leq 0} c_f^+(\mu, m) e(m \tau) {\bf{1}}_{\mu} \end{align*}
such that $ f(\tau) = P_f(\tau) + O(e^{- \varepsilon v})$ as $v = \Im(\tau) \rightarrow \infty$ for some $\varepsilon >0$. \\

\item[(iii)] The function is harmonic of weight $l$, i.e.~$\Delta_l f = 0$. \\

\end{itemize} We write $H_l(\omega_{L_A})$ for the space of such forms, and call $P_f(\tau)$ the holomorphic or principal part of $f$.

\end{definition}

\begin{definition} We let $\overline{\omega}_{L_A}$ denote the conjugate Weil representation on $\mathcal{S}_{L_A}$, 
hence $\overline{\omega}_{L_A, \psi}(k_{\gamma}) = \omega_{-L_A, \psi}(g_{\gamma})$ 
for each $\gamma \in \Gamma = \operatorname{SL}_2({\bf{Z}})$ and its corresponding diagonal image 
$k_{\gamma} \in \mathcal{K} = \operatorname{SL}_2(\widehat{\bf{Z}})$, cf.~\cite[(2.7)]{BY}.  \end{definition}

\subsubsection{Classical description and characterization as automorphic Green's functions} 

We now return to the setup above, with $L_A \subset V_A$ an integral lattice of signature $(2,2)$. Hence, the Siegel theta series 
\begin{align*} \theta_{L_A}(\tau, z, h_f): \mathfrak{H} \times D^{\pm}(V_A) \longrightarrow \mathcal{S}_{L_A}\end{align*}
defined for each $h = h_f \in \operatorname{GSpin}(V_A)({\bf{A}}_f)/U_A$ by
\begin{align*} \theta_{L_A}(\tau, z, h) &= \sum\limits_{ \mu \in L_A^{\vee}/L_A }\theta_{L_A}^{\star}(z, h, g_{\tau}; {\bf{1}}_{\mu})\end{align*}
is a nonholomorphic $\Gamma_h$-invariant function in $z \in D(V_A)$.
As a function in $\tau \in \mathfrak{H}$, it is a nonholomorphic Maass form of weight $0$
and representation $\overline{\omega}_{L_A}$, so $\theta_{L_A}(\tau, \cdot) \in H_0(\overline{\omega}_{L_A})$.
Given $f_0 \in H_{0}(\omega_{L_A})$ a harmonic weak Maass form of weight $-l=0$ and representation $\omega_{L_A}$, we consider the regularized theta lift 
\begin{align*} \Phi(f_0, z, h) 
&= \int_{\mathcal{F}}^{\star} \langle \langle f_0(\tau), \theta_{L_A} (\tau, z, h) \rangle \rangle d \mu(\tau)
= \operatorname{CT}_{s=0} \left(\lim_{T \rightarrow \infty} \int_{\mathcal{F}_T} \langle \langle f_0(\tau), \theta_{L_A} (\tau, z, h) \rangle \rangle v^{-s} d \mu(\tau) \right). \end{align*}

Let \begin{align*} f_{0, A}(\tau) &= \sum\limits_{\mu \in L_A^{\vee}/L_A} 
\sum\limits_{m \in {\bf{Q}} \atop m \gg -\infty} c_{f_{0, A}}(\mu, m) e(m \tau) {\bf{1}}_{\mu}
\in M_{0}^{!}(\omega_{L_A}) \cong \ker(\xi_0)\end{align*} be a weakly holomorphic form with 
integer Fourier coefficients $c_{f_{0, A}}(\mu, m) \in {\bf{Z}}$. The theorem of Borcherds \cite[Theorem 13.3]{Bo} (cf.~\cite[Theorem 1.2]{KuBL})
shows there exists a meromorphic modular form $\Psi( f_{0, A}, z, h)$ on $X_A = X_{U_A}$ of weight $l = \frac{c^+_{f_{0, A}}(0, 0)}{2}$ and divisor 
\begin{align*} \operatorname{Div}(\Psi_{f_{0, A}}) 
&= Z(f_{0, A}) = \sum\limits_{\mu \in L_A^{\vee}/L_A} \sum\limits_{m \in {\bf{Q}}_{>0}} c_{f_{0, A}}(\mu, -m) \cdot Z_A(m, \mu) \end{align*}
related to the regularized theta lift $\Phi(f_{0, A}, z, h)$ by 
\begin{align*} \Phi( f_{0, A}, z, h) 
&= - 2 \log \vert  \Psi( f_{0, A}, z, h) \vert_A^2 - c_{f_{0, A}}(0, 0) \cdot \left( 2 \log \vert y \vert_A + \Gamma'(1) \right) \end{align*}
Moreover, Howard-Madapusi Pera \cite[Theorem 9.1.1]{HMP} shows that this meromorphic modular forms  
$\Psi( f_{0, A}, z, h)$ takes algebraic values, so that the regularized theta lift $\Phi( f_{0, A}, z, h)$ attached to any 
$f_{0, A} \in M^!_{0}(\omega_{L_A})$ takes values in logarithms of algebraic numbers, and hence in the ring of periods (see \cite{KZ}). 
We have the following generalization for $f_{0, A} \in H_{0}(\omega_{L_A})$ a harmonic weak Maass form which is not necessarily weakly holomorphic.

\begin{theorem}[Borcherds, Bruinier]\label{Green}

Let $f_{0, A} \in H_{0}(\omega_{L_A})$ be a harmonic weak Maass form of weight $0$ and representation $\omega_{L_A}$ whose holomorphic part 
\begin{align*} f_{0, A}^+(\tau) 
&= \sum\limits_{\mu \in L_A^{\vee}/L_A} \sum\limits_{m \in {\bf{Q}} \atop m \gg - \infty} c_{f_{0,A}}^+(\mu, m) e(m \tau) {\bf{1}}_{\mu}\end{align*}
has integer Fourier coefficients $c_{f_{0,A}}^+(\mu, m) \in {\bf{Z}}$. Consider the special divisor defined by
\begin{align*} Z(f_{0, A}) &= \sum\limits_{\mu \in L_A^{\vee}/L_A} \sum\limits_{m \in {\bf{Q}} \atop m >0} 
c^+_{f_{0,A}}(\mu, -m) Z_A(\mu, m) \subset X_{U_A}. \end{align*}
The regularized theta lift $\Phi(f_{0,A}, z, h)$ is a smooth function on $ X_{U_A} \backslash Z(f_{0,A})$, 
with a logarithmic singularity along the divisor $- 2 Z(f_{0, A})$. Moreover: \\

\begin{itemize}

\item The $(1, 1)$ form $d d^c \Phi( f_{0, A}, z, h)$ has an analytic continuation to a smooth form on $X_{U_A}$, and satisfies the Green 
current equation $d d^c [\Phi( f_{0, A}, z, h)] + \delta_{Z(f_{0, A})}= [d d^c \Phi( f_{0, A}, z, h)].$
Here, $\delta_{Z(f_{0, A})}$ denotes the Dirac current of the divisor $Z(f_{0, A})$. \\

\item The regularized theta lift $\Phi( f_{0, A}, z, h)$ is an eigenfunction for the generalized Laplacian operator $\Delta_z$
defined on $z \in D(V_A)$, with eigenvalue $c_{f_{0, A}}^+(0, 0)/2$. \\

\end{itemize} In particular, the regularized theta lift $\Phi( f_{0, A}, \cdot)$ gives the automorphic Green's function $G_{Z(f_{0, A})}$ for the divisor $Z(f_{0, A})$, 
making it an arithmetic divisor $\widehat{Z}(f_{0, A}) = (Z(f_{0, A}), \Phi( f_{0, A}, \cdot))$ on the spin Shimura surface $X_{U_A}$. \\

\end{theorem}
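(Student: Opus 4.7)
The strategy is to reduce the general statement to Borcherds' original theorem for weakly holomorphic input, invoked just above Theorem \ref{Green}, by decomposing $f_{0,A} = f_{0,A}^+ + f_{0,A}^-$ into principal and nonholomorphic parts and analyzing each contribution to the regularized integral separately. The weakly holomorphic case controls the singular structure along $Z(f_{0,A})$; the nonholomorphic part contributes only a smooth correction.

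First I would treat the case $f_{0,A} \in M^!_0(\omega_{L_A}) \cong \ker(\xi_0)$. Here Borcherds' theorem \cite[Theorem 13.3]{Bo}, as stated immediately before Theorem \ref{Green}, produces the meromorphic Borcherds product $\Psi(f_{0,A}, z, h)$ of weight $c_{f_{0,A}}^+(0,0)/2$ with divisor exactly $Z(f_{0,A})$, together with the identity
\begin{align*}
\Phi(f_{0,A}, z, h) &= -2 \log \| \Psi(f_{0,A}, z, h) \|_A^2 - c_{f_{0,A}}^+(0,0) \bigl( 2 \log |y|_A + \Gamma'(1) \bigr).
\end{align*}
From this formula the three assertions follow at once: the logarithmic singularity along $-2Z(f_{0,A})$ and smoothness on the complement come from the Poincar\'e-Lelong formula applied to $\log \vert \Psi \vert_A^2$; the current equation $d d^c [\Phi] + \delta_{Z(f_{0,A})} = [d d^c \Phi]$ is the distributional form of Poincar\'e-Lelong together with the fact that $d d^c \log |y|_A^2$ extends smoothly across $X_{U_A}$; and the Laplacian eigenvalue $c_{f_{0,A}}^+(0,0)/2$ arises because $\theta_{L_A}(\tau, z, h)$ intertwines the weight-$0$ hyperbolic Laplacian in $\tau$ with the generalized Laplacian $\Delta_z$ on $D(V_A)$ (cf.~\cite[$\S$4]{KuBL}), so that pairing against the constant term $c_{f_{0,A}}^+(0,0) \mathbf{1}_{\mu}$ produces the stated eigenvalue after regularization.

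Next I would handle general $f_{0,A} \in H_0(\omega_{L_A})$ following Bruinier \cite{BrB}. The coefficients of $f_{0,A}^-$ are weighted by the incomplete gamma factor $W_0(2\pi m v) = \Gamma(1, 2|m|v)$, which decays exponentially as $v \to \infty$, so that
\begin{align*}
\int_{\mathcal{F}}^{\star} \langle \langle f_{0,A}^-(\tau), \theta_{L_A}(\tau, z, h) \rangle \rangle \, d\mu(\tau)
\end{align*}
converges without regularization away from $Z(f_{0,A})$, and a local analysis in a tubular neighborhood of each component of $Z(f_{0,A})$ (using the splitting of $\theta_{L_A}$ along the orthogonal decomposition $V_A = z^{\perp} \oplus \mathbf{R} x \oplus \cdots$ for each defining vector $x$) shows that this contribution extends smoothly across the divisor. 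Consequently the singular behavior, the Green current equation, and the Laplacian eigenvalue are all transferred unchanged from the weakly holomorphic piece.

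The main obstacle will be the local regularization near $Z(f_{0,A})$: verifying that the $\operatorname{CT}_{s=0}$ prescription commutes with the splitting of $\theta_{L_A}$ across the special divisors, so that the logarithmic singularity has the stated coefficient $-2$ and $d d^c \Phi$ extends smoothly. This is the technical heart of \cite{BrB}, handled there by Millson-Shintani-type unfolding and explicit manipulation of the divergent parts; I would import that analysis directly rather than redo it.
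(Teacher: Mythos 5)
Your plan to reduce the general case to Borcherds' weakly holomorphic theorem by splitting $f_{0,A}=f_{0,A}^++f_{0,A}^-$ has a genuine gap. Neither $f_{0,A}^+$ nor $f_{0,A}^-$ is individually modular in $\tau$, so the regularized integral $\int_{\mathcal{F}}^\star\langle\langle\cdot,\theta_{L_A}\rangle\rangle\,d\mu$ does not split canonically into two pieces; and more fundamentally there is in general no weakly holomorphic form in $M_0^!(\omega_{L_A})$ with the same principal part as $f_{0,A}$ --- the obstruction to such a lift is precisely $\xi_0(f_{0,A})\neq 0$, which is exactly the situation of interest here, since $\xi_0(f_{0,A})=g_{f,A}$ is a nonzero cusp form. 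So there is no ``weakly holomorphic piece'' from which to transfer the singularity, the current equation, and the Laplacian eigenvalue. What actually carries the theorem for general $f_{0,A}\in H_0(\omega_{L_A})$ is Bruinier's independent construction in \cite{BrB} via the harmonic Poincar\'e series $F_{\mu,m}$, where the local singularity analysis is established from scratch rather than as a perturbation of the weakly holomorphic case. Your closing sentence acknowledging the need to ``import the analysis from \cite{BrB}'' is where the genuine content of the proof lies; the intermediate reduction framing does not hold up and should be dropped.

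Your proposal also omits the step the paper actually uses to identify $\Phi(f_{0,A},\cdot)$ with the automorphic Green's function $G_{Z(f_{0,A})}$, as opposed to merely verifying it is \emph{a} Green current for $Z(f_{0,A})$. After citing \cite[Theorems 4.2, 4.3]{BY}, \cite{Br}, and \cite[Proposition 5.6, Theorems 6.1--6.2]{BF} for the log singularity, the Green current equation, and the Laplacian eigenvalue, the paper observes that the difference $G_{Z(f_{0,A})}-\Phi(f_{0,A},\cdot)$ is a smooth subharmonic function on $X_{U_A}(\mathbf{C})$ and invokes Yau's theorem \cite{Yau} to conclude this difference is constant. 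Without that normalization argument (or some replacement for it), the final ``in particular'' clause of the theorem --- that $\Phi(f_{0,A},\cdot)$ \emph{is} $G_{Z(f_{0,A})}$ --- is unproved.
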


\begin{proof} See \cite[Theorems 4.2 and 4.3]{BY} and \cite{Br}, as well as \cite[Proposition 5.6, Theorem 6.1, Theorem 6.2]{BF}. 
The special case of $f_{0,A}$ weakly holomorphic is due to Borcherds \cite{Bo}. \end{proof}

\subsection{Choice of harmonic weak Maass form} 

We choose the Maass form $f_{0, A} \in H_{0}(\omega_{L_A})$ so that the holomorphic cuspidal form $g_{A} = \xi_0(f_{0, A}) \in S_{2}(\overline{\omega}_{L_A})$
is the canonical lift in the sense of Theorem \ref{YZhang} below of the eigenform $f \in S_2(\Gamma_0(N))$. 
Here again, $f \in S_2(\Gamma_0(N))$ denotes the cuspidal newform parametrizing $E/{\bf{Q}}$.
We assume $(N, d_K)=1$. We then have the following relation to scalar-valued forms (cf.~\cite[$\S$3]{BY}). 

\begin{theorem}\label{YZhang} 

Let $(V_A, Q_A)$ be the quadratic space of signature $(2,2)$ defined in $(\ref{V_A})$.
Let $L_A \subset V_A$ be the lattice associated to the compact open subgroup $U_A \cong K_0(N)^2$ of 
$\operatorname{GSpin}(V_A)({\bf{A}}_f) \cong  \operatorname{GL}_2({\bf{A}}_f) \times_{{\bf{G}}_m} \operatorname{GL}_2({\bf{A}}_f)$ described Proposition \ref{spinID} and Corollary \ref{lattices}.
Write the Fourier series expansion of $f \in S_2^{\operatorname{new}}(\Gamma_0(N))$ as 
\begin{align*} f (\tau) &= \sum\limits_{m \geq 1} c_f(m) e(m \tau). \end{align*}
There exists an $\mathcal{S}_{L_A}$-valued modular form $g = g_{f, A}$ of weight $2$, determined 
canonically as the lifting of $f$ defined in \cite{YZ}, whose Fourier series expansion is given by 
\begin{align*} g(\tau) = g_{f, A}(\tau) &= \sum\limits_{ \mu \in L_A^{\vee}/L_A} g_{\mu}(\tau) {\bf{1}}_{\mu}, \end{align*}
where 
\begin{align*} g_{\mu}(\tau) = \sum\limits_{ m \in {\bf{Q}} \atop m \equiv N Q_A(\mu) \bmod (N) } 
c_f(m) s(m) e \left( \frac{m \tau}{ N} \right). \end{align*}
Here, $s(m)$ denotes the function defined on each class $m \bmod N$ by $s(m) = 2^{\varOmega(m, N)}$, 
where $\varOmega(m, N)$ denotes the number of prime divisors of the greatest common divisor $(m, N)$. \end{theorem}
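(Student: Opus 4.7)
The plan is to establish Theorem \ref{YZhang} by combining the general canonical lifting of Yuan-Zhang \cite{YZ} (which produces vector-valued modular forms for Weil representations from scalar-valued newforms) with a direct computation of the Fourier coefficients for the explicit lattice $L_A = N^{-1}\mathfrak{a} \oplus N^{-1}\mathfrak{a}$ described in Corollary \ref{lattices}. The strategy mirrors \cite[$\S$3]{BY}, but with the Hilbert-type lattice $L_A$ replacing the signature $(n,2)$ lattices treated there.

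First I would analyze the discriminant module $L_A^{\vee}/L_A$ and the induced quadratic form $Q_A \bmod \mathbf{Z}$. Since $L_A^{\vee} = \mathfrak{d}_K^{-1}N^{-1}\mathfrak{a} \oplus \mathfrak{d}_K^{-1}N^{-1}\mathfrak{a}$ and $L_A$ has level $N$ by Corollary \ref{lattices}, the quadratic form $Q_A$ takes values in $\frac{1}{N}\mathbf{Z}$ on $L_A^{\vee}$. Thus for $\mu \in L_A^{\vee}/L_A$ and $x \in \mu + L_A$ we have $NQ_A(x) \equiv NQ_A(\mu) \bmod N$, which produces the congruence condition $m \equiv NQ_A(\mu) \bmod N$ appearing in the stated Fourier expansion. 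This also explains the appearance of the substituted variable $\tau/N$: the Weil representation $\omega_{L_A}$ factors through the principal congruence subgroup $\Gamma(N) \subset \operatorname{SL}_2(\mathbf{Z})$, so the $\mathcal{S}_{L_A}$-valued expansion naturally involves $e(m\tau/N)$.

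Next I would construct the lift itself. Via the accidental isomorphism $\operatorname{GSpin}(V_A) \cong \operatorname{GL}_2^2$ and the identification $X_{U_A} \cong Y_0(N)^2$, the target space $S_2(\overline{\omega}_{L_A})$ is related by Hecke-equivariant theta correspondence to the space $S_2(\Gamma_0(N))$ of classical scalar-valued cusp forms. Concretely, one induces from $\Gamma_0(N) \hookrightarrow \operatorname{SL}_2(\mathbf{Z})$: given a newform $f \in S_2^{\operatorname{new}}(\Gamma_0(N))$, the function
\begin{equation*}
g_{f,A}(\tau) \;=\; \sum_{\mu \in L_A^{\vee}/L_A}\ \sum_{\gamma \in \Gamma_\infty \backslash \operatorname{SL}_2(\mathbf{Z})} \big(f\big|_{2}\gamma\big)\!\left(\tfrac{\tau}{N}\right) \cdot \omega_{L_A}(\gamma)^{-1}{\bf 1}_{\mu}\ {\bf 1}_\mu
\end{equation*}
(suitably regularized and symmetrized, as in \cite[Lemma 3.1]{BY}) transforms with representation $\overline{\omega}_{L_A}$ and weight $2$; its cuspidality follows from that of $f$, and its canonicity is precisely the content of the Yuan-Zhang construction. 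Alternatively, one can construct $g_{f,A}$ as the image of $f$ under an explicit Shimura-type lifting operator $\iota_{L_A}$ and verify Hecke-equivariance on a generating set.

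Third, I would verify the Fourier expansion by matching local components. Expanding $g_{f,A}$ in the basis $\{{\bf 1}_\mu\}$, each coefficient $g_{\mu}(\tau)$ is a sum $\sum_{m} c_g(\mu,m) e(m\tau/N)$ whose indices run over $m \equiv NQ_A(\mu) \bmod N$ by the congruence analyzed above. To match $c_g(\mu,m) = c_f(m)s(m)$, I would compare local $L$-factors: the vector-valued lift inherits the Euler product of $L(s,f)$, but at each prime $q \mid N$ the multiplicity with which a given $m$ arises from the lattice cosets contributes a combinatorial factor equal to the number of ways of distributing the $q$-part of $(m,N)$ among the two factors of $L_A = N^{-1}\mathfrak{a} \oplus N^{-1}\mathfrak{a}$. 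Multiplicativity in $N$ shows this equals $\prod_{q \mid (m,N)} 2 = 2^{\Omega(m,N)} = s(m)$.

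The main obstacle will be making the combinatorial counting rigorous at primes $q \mid N$ where $q$ splits versus ramifies in $K$ (recalling $(N,d_K)=1$): one must show that the local Weil representation $\omega_{L_A,q}$ on $\mathcal{S}_{L_{A,q}}$, when paired against the local component of $f$, produces exactly the factor $2$ per prime divisor of $(m,N)$ and no extraneous powers. This reduces to the local newform theory at $q$ combined with the explicit description of $L_A \otimes \mathbf{Z}_q$ as a rank-$4$ $\mathbf{Z}_q$-lattice with the split form $Q_{A,q}$, and matches the analogous calculation carried out in \cite[Theorem 3.4]{BY} for signature $(n,2)$.
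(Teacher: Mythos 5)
The paper's proof of Theorem \ref{YZhang} is a one-line citation: the statement is ``a special case of \cite[Theorem 4.15]{YZ}, adapted to match the setup of \cite[p.~639, Lemma 3.1]{BY},'' with a pointer to the more general result of Str\"omberg \cite[Theorem 5.2]{Str}. You instead try to reconstruct the lifting from scratch, so the routes are genuinely different; a correct self-contained proof would of course be more illuminating, but yours has two concrete problems.

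First, the explicit formula you propose for $g_{f,A}$ is not the right one. The passage from a scalar-valued form $f \in S_2(\Gamma_0(N))$ to an $\mathcal{S}_{L_A}$-valued form for the Weil representation $\omega_{L_A}$ is, in the construction of \cite{YZ} (and Scheithauer before it), an induction over cosets of $\Gamma_0(N)$ in $\operatorname{SL}_2({\bf{Z}})$, namely
$g_{f,A}(\tau) = \sum_{\gamma \in \Gamma_0(N)\backslash \operatorname{SL}_2({\bf{Z}})} (f|_2\gamma)(\tau)\,\omega_{L_A}(\gamma)^{-1}\mathbf{1}_0$,
not a sum over $\Gamma_\infty \backslash \operatorname{SL}_2({\bf{Z}})$. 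Summing over $\Gamma_\infty$-cosets of a $\Gamma_0(N)$-invariant function produces an Eisenstein-type (non-cuspidal, not even convergent) object, and the outer double sum over $\mu$ you have written is redundant: the expression $\omega_{L_A}(\gamma)^{-1}\mathbf{1}_0$ already spreads $f$ over the discriminant module. Also the variable substitution $\tau \mapsto \tau/N$ belongs inside the final Fourier expansion, not inside the slash; building it into the defining sum breaks the modular transformation law. Second, your mechanism for the arithmetic factor $s(m)$ is off: $s(m)$ does not arise from matching local $L$-factors (the Euler product is unchanged by the lift); it is a purely combinatorial count of how many cosets $\mu \in L_A^\vee/L_A$ with $NQ_A(\mu)\equiv m \bmod N$ receive a nonzero contribution from the $\Gamma_0(N)\backslash\operatorname{SL}_2({\bf{Z}})$ coset sum, exactly as in \cite[Lemma 3.1]{BY}. (As a minor point, \cite{YZ} is the single-author paper of Y.~Zhang, not a Yuan--Zhang collaboration.) The first paragraph of your argument, analyzing $L_A^\vee/L_A$ and deriving the congruence $m \equiv NQ_A(\mu)\bmod N$ from the level-$N$ structure, is correct and is the one piece of local adaptation the paper's citation actually requires.
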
 

\begin{proof} This is a special case of \cite[Theorem 4.15]{YZ}, adapted to match the setup of \cite[p.~639, Lemma 3.1]{BY}. 
See also the more general theorem of Str\"omberg \cite[Theorem 5.2]{Str}.\end{proof}

Observe from the Fourier series expansion described in Theorem \ref{YZhang} above that $c^+_{f_{0, A}}(0,0)=0$
and hence that the corresponding regularized theta lift $\Phi( f_{0, A}, \cdot )$ is annihilated by the Laplacian $\Delta_z$. 

\subsection{Langlands Eisenstein series and the Siegel-Weil formula}

We now record the special case of the Siegel-Weil formula we use for later calculations; see \cite[Theorem 4.1]{KuBL}. 
Recall we introduced the anisotropic subspaces $(V_{A, j}, Q_{A, j})$ of signature $(1,1)$.
Let us temporarily write $(V_0, Q_0)$ to denote the ambient space $(V_A, Q_A)$ of signature $(2, 2)$, so that $(V_j, Q_j)$ for $j=0,1,2$ denotes any of these three spaces. 
In each case, we write $\omega_{j} = \omega_{L_j}$ to denote the Weil representation 
\begin{align*} \omega_{L_A}: \operatorname{SL}_2({\bf{A}}) \times \operatorname{GSpin}(V_A)({\bf{A}}) \longrightarrow \operatorname{Aut} \left(  \mathcal{S}(V_A({\bf{A}})) \right), \end{align*}
with $\theta_{L_j}$ the corresponding theta kernel defined on $g \in \operatorname{SL}_2({\bf{A}})$, 
$h \in \operatorname{GSpin}(V_j)({\bf{A}})$, and $\varphi \in \mathcal{S}(V_j ({\bf{A}}) )$ by 
\begin{align*} \theta_{L_j}(g, h; \varphi) &= \sum\limits_{x \in V_j( {\bf{Q}} ) } \omega_{j}(g, h) \varphi(x). \end{align*}
Let $\mathcal{K} = \operatorname{SL}_2(\widehat{\bf{Z}})$ denote the maximal compact subgroup of $\operatorname{SL}_2({\bf{A}}_f)$
and $\mathcal{K}_{\infty} = \operatorname{SO}_2({\bf{R}})$ the maximal compact subgroup of $\operatorname{SL}_2({\bf{R}})$. 
We then write the Iwasawa decomposition as
\begin{align}\label{Iwasawa} \operatorname{SL}_2({\bf{A}}) = N({\bf{A}}) M({\bf{A}}) \mathcal{K} \mathcal{K}_{\infty}, \end{align} for 
\begin{align*} N = \left\lbrace n(b): b \in {\bf{G}}_a \right\rbrace, \quad n(b) = \left( \begin{array}{cc} 1 & b \\ ~& 1 \end{array} \right) \end{align*}
and \begin{align*} M = \left\lbrace m(a): a \in {\bf{G}}_m \right\rbrace, 
\quad m(a) = \left( \begin{array}{cc} a & 0 \\ 0 & a^{-1}\end{array} \right). \end{align*} 
Let $\chi_{V_j}$ denote the idele class character of ${\bf{Q}}$ defined on $x \in {\bf{A}}^{\times} / {\bf{Q}}^{\times}$ by  
\begin{align*} \chi_{V_j} (x) = (x, (-1)^{ \frac{ d(j)(d(j)-1) }{2}  } \det(V_j))_{\bf{A}}), \end{align*}
where $(\cdot, \cdot)_{\bf{A}}$ denotes the Hilbert symbol on ${\bf{A}}$, $d(j) = \dim(V_j)$, and $\det(V_j)$ the Gram determinant. 
Let $(p(V_j), q(V_j))$ denote the signature of the space $V_j$.
Given $s \in {\bf{C}}$, let $I(s, \chi_{V_j})$ denote the corresponding principal series representation 
of $\operatorname{SL}_2({\bf{A}})$ induced by the quasi-character $\chi_{V_j} \vert \cdot \vert^s$. 
This consists of all smooth decomposable functions 
$\phi(g, s)$ on $g \in \operatorname{SL}_2({\bf{A}})$ and $s \in {\bf{C}}$ satisfying 
\begin{align*} \phi( n(b)m(a) g, s ) &= \chi_{V_j}(a) \vert a \vert^{s+1} \phi(g, s)   \end{align*} 
for all $b \in {\bf{A}}$, $a \in {\bf{A}}^{\times}$, and $g \in \operatorname{SL}_2({\bf{A}})$. 
Let $s_0(V_j) := \dim(V_j)/2 - 1$; there is an $\operatorname{SL}_2({\bf{A}})$-intertwining map 
\begin{align*} \lambda: \mathcal{S}(V_j({\bf{A}})) &\longrightarrow I(s_0(V_j), \chi_{V_j}), \quad 
\varphi \mapsto \lambda(\varphi)(g) := (\omega_j(g) \varphi)(0). \end{align*}

Recall that a section $\phi = \phi(g, s) \in I(s, \chi_{V_j})$ is called {\it{standard}} if its restriction to the maximal compact subgroup 
$\mathcal{K} \mathcal{K}_{\infty}$ does not depend on $s \in {\bf{C}}$. 
Given any standard section $\phi \in I(s, \chi_{V_j})$, we consider the corresponding 
Eisenstein series defined for $s\in {\bf{C}}$ with $\Re(s) \gg 1$ by the summation
\begin{align*} E(g, s; \phi ) = E_{L_j}(g, s; \phi) 
&= \sum\limits_{ \gamma \in P( {\bf{Q}} ) \backslash \operatorname{SL}_2({\bf{Q}})} \phi(\gamma g, s). \end{align*}
This Eisenstein series converges absolutely for $\Re(s) >1$. It has a meromorphic continuation $E^{\star}(g,s; \phi)$
to all $s \in {\bf{C}}$, and satisfies the Langlands functional equation $E^{\star}(g, s; \phi) = \pm E^{\star}(g, -s; M \phi)$ 
for $M$ the unipotent intertwining operator (see e.g.~\cite[$\S 3$]{Bu}). 
Observe that via the Iwasawa decomposition $(\ref{Iwasawa})$, the image $\lambda(\varphi) \in I(s_0(V_j), \chi_{V_j})$ 
has a unique extension to a standard section $\lambda(\varphi, s) \in I(s, \chi_{V_j})$ for which $\lambda(\varphi, s_0(V_j)) = \lambda(\varphi)$.
Let $dh$ denote the Tamagawa Haar measure on $\operatorname{SO}(V_j)({\bf{A}})$. 

\begin{theorem}[Siegel-Weil]\label{SW-abstract} 

Let $(V_j, Q_j)$ for $j=0, 1,2$ denote any of the quadratic spaces introduced above. 
We have for any $g \in \operatorname{SL}_2({\bf{A}})$ and decomposable Schwartz function
$\varphi \in \mathcal{S}(V_j({\bf{A}}))$ the average formula
\begin{align*} \frac{\kappa}{2} \cdot \int_{ \operatorname{SO}(V_j)({\bf{Q}}) \backslash \operatorname{SO}(V_j)({\bf{A}}) } 
\theta_{L_j}(h, g; \varphi) dh 
&= E_{L_j}(g, s_0, \lambda(\varphi)), \end{align*} 
where $dh$ denotes the Tamagawa Haar measure on $\operatorname{SO}(V_j)({\bf{A}})$, and 
\begin{align*} \kappa &= \begin{cases} 1 &\text{ if $\dim(V_j) > 2$} \\ 2 &\text{ if $\dim(V_j) \leq 2$}  \end{cases}
\quad \text{and} \quad s_0 = s_0(V_j) = \frac{\dim(V_j)}{2} -1. \end{align*}
Moreover, the Eisenstein series $E_{L_j}(g, s, \lambda(\Phi))$ in each case $j=0,1,2$ is holomorphic at $s = s_0$. \end{theorem}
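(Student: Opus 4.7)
The plan is to deduce this result from the general Siegel-Weil formula of Kudla recorded in \cite[Theorem 4.1]{KuBL} and Bruinier-Yang \cite[Theorem 2.1]{BY} by verifying the hypotheses case-by-case for the three quadratic spaces $(V_j, Q_j)$ with $j = 0, 1, 2$. The formula in each case asserts the equality between an orthogonal period integral of a theta series and the value at a specific point $s_0$ of a Langlands Eisenstein series, with the constant $\kappa$ accounting for the order of the kernel of $\operatorname{Spin} \to \operatorname{SO}$ relative to the chosen normalization.

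First I would treat the ambient space $V_0 = V_A$ of dimension four, signature $(2,2)$, and Witt index two (isotropy being visible from $Q_A((z,z)) = Q_{\mathfrak{a}}(z) - Q_{\mathfrak{a}}(z) = 0$). Here $s_0 = \dim(V_0)/2 - 1 = 1$, which places us at the boundary of Weil's classical convergence range. Via the exceptional isomorphism $\operatorname{GSpin}(V_A) \cong \operatorname{GL}_2^2$ from Proposition \ref{spinID}, the theta integral can be handled either through Kudla-Rallis regularization or by direct unfolding using the two-fold product structure, which bypasses the convergence issue since the orthogonal decomposition identifies the theta integral with a product of two $\operatorname{GL}_2$ theta integrals in the convergent range. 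This yields the formula with $\kappa = 1$.

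Next I would handle the anisotropic subspaces $(V_{A, j}, Q_{A, j})$ for $j = 1, 2$. The crucial observation is that these spaces are anisotropic because the norm form $Q_{\mathfrak{a}}$ on the real quadratic field $K = {\bf{Q}}(\sqrt{d})$ represents zero only trivially. Consequently $\operatorname{SO}(V_{A, j}) \cong K^1$ is an anisotropic torus, the adelic quotient $\operatorname{SO}(V_{A, j})({\bf{Q}}) \backslash \operatorname{SO}(V_{A, j})({\bf{A}})$ is compact, and the theta integral converges absolutely. Here $s_0 = \dim(V_{A, j})/2 - 1 = 0$, and the factor $\kappa = 2$ appears because the parameter $s_0 \le 0$ lies in the regularized regime where the Kudla-Rallis second-term identity contributes a factor of two from the kernel of $\operatorname{Mp}_2 \to \operatorname{SL}_2$; alternatively, this can be viewed as accounting for the two connected components $\operatorname{GO}^\pm(V_{A, j})$ that appeared in the discussion after \cite[Theorem 2.3.3]{Po}.

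The main obstacle, and the final step in each case, is establishing holomorphy of $E_{L_j}(g, s; \lambda(\varphi))$ at the relevant $s_0$. For the $(1,1)$ cases with $s_0 = 0$, holomorphy follows because the anisotropic torus $K^1$ does not admit a constant automorphic form with nontrivial character contribution, so the associated incoherent-type obstruction vanishes; more concretely, the constant term of $E_{L_j}$ at $s = 0$ can be computed via the intertwining operator in terms of the Dedekind zeta function $\zeta_K(s)$ at a point away from its pole. For the $(2,2)$ case with $s_0 = 1$, holomorphy follows from the general Langlands theory together with the explicit description of the induced representation, since $\chi_{V_0}$ is the trivial character (as the discriminant $d(V_A)$ was computed to be a square in Proposition \ref{spinID}(ii)), reducing the question to holomorphy of the standard degenerate principal series intertwiner at $s = 1$, which is classical. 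Once these holomorphy statements are in place, the equality follows by specialization of the Siegel-Weil identity of \cite[Theorem 4.1]{KuBL}.
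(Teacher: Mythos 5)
The paper's proof of this theorem consists entirely of a citation to Kudla (\cite[Theorem 4.1]{KuBL} and \cite[\S~I.4]{Ku2}); the result is imported rather than rederived. Your proposal attempts to supply details, but it contains a significant error in the $(2,2)$ case. You argue that holomorphy of $E_{L_0}(g, s, \lambda(\varphi))$ at $s_0 = 1$ follows because $\chi_{V_0}$ is trivial (correct, given $d(V_A) \equiv 1$ in ${\bf{Q}}^\times/({\bf{Q}}^\times)^2$) and that this "reduces to holomorphy of the standard degenerate principal series intertwiner at $s=1$, which is classical." This is backwards: for the \emph{trivial} quadratic character, the real-analytic Eisenstein series on $\operatorname{SL}_2$ has a \emph{simple pole} at $s = 1$ --- its constant term involves $\zeta(s)/\zeta(s+1)$, and $\zeta(1)$ diverges. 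It is the nontrivial-character case (here $\chi_{V_j} = \eta$ for $j = 1, 2$) that is regular at $s_0 = 0$, since $L(s+1, \eta)$ has no pole there. Moreover, the split $(2,2)$ space $V_A$ fails Weil's convergence criterion ($m - r = 2 \not> r + 1 = 3$), so the $j = 0$ theta integral requires regularization and cannot be disposed of by the first-term Siegel-Weil formula, nor by the factorization you propose (the isomorphism $\operatorname{GSpin}(V_A) \cong \operatorname{GL}_2^2$ does not by itself split the $\operatorname{SO}(V_A)$-period into two convergent lower-rank periods). Note also that Kudla's \cite[Theorem 4.1]{KuBL} is stated for anisotropic $V$. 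Fortunately, the paper only ever invokes this result for the anisotropic $(1,1)$ subspaces $V_{A,1}$ and $V_{A,2}$ (cf.\ Corollary \ref{SW-vector} and its downstream uses in Corollary \ref{4.6} and Theorem \ref{4.7}), so the $j=0$ case never enters the main argument.

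A smaller point: the factor $\kappa = 2$ for $\dim V \leq 2$ is a normalization constant appearing already in the convergent first-term Siegel--Weil formula for anisotropic spaces, arising from the Tamagawa measure conventions relating $\operatorname{SO}(V)$ and $\operatorname{O}(V)$ periods; it is not a consequence of the Kudla--Rallis second-term identity (which concerns the second Laurent coefficient in the nonconvergent, regularized regime) nor of the kernel of $\operatorname{Mp}_2 \to \operatorname{SL}_2$.
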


\begin{proof} See \cite[Theorem 4.1]{KuBL}, and more generally \cite[$\S$ I.4]{Ku2}. \end{proof}

Let us now consider the following more explicit version of Theorem \ref{SW-abstract}. 
We first describe the theta kernel $\theta_{L_j}$ and Eisenstein series $E_{L_j}$ in terms of vector-valued modular forms. 
Following \cite[$\S$ 2.1]{BY}, we can for any integer weight $l \in {\bf{Z}}$ consider the
unique standard section $\Phi_{\infty}^l(s) \in I_{\infty}(s, \chi_{V_0})$ for which 
\begin{align}\label{IDC} \Phi_{\infty}^l( k(\theta), s) = \exp(i l \theta), \quad 
k(\theta) = \left( \begin{array}{cc} \cos \theta & -\sin \theta \\ \sin \theta & \cos \theta \end{array} \right) 
\in \mathcal{K}_{\infty} = \operatorname{SO}_2( {\bf{R}}). \end{align}
In terms of the Iwasawa decomposition $(\ref{Iwasawa})$, this section also satisfies the transformation property 
\begin{align}\label{Iw} \Phi_{\infty}^l( n(b) m(a) k(\theta), s) &= \chi_{V_0}(a) \vert a \vert^{s+1} \exp(i l \theta)  \end{align} 
for all $n(b) \in N_2({\bf{A}})$, $m(a) \in M_2({\bf{A}})$, and $k(\theta) \in \operatorname{SO}_2({\bf{R}})$ when $j=1,2$. 
We shall use the same notation to denote the restriction to each of the subspaces $\Phi_{\infty}^l = \Phi_{\infty}^l(s) \in I(s, \chi_{V_j})$.
As explained in \cite[(2.15)]{BY}, we deduce from our definition of the Gaussian  
$\varphi_{\infty} \in \mathcal{S}(V_0({\bf{R}})) \otimes C^{\infty}( D(V_0))$ that we have the relation(s)
\begin{align}\label{Gaussian} \lambda_{\infty}(\varphi_{\infty}) = \lambda_{\infty}(\varphi_{\infty}(\cdot, z)) 
= \Phi_{\infty}^{ \frac{ p(V_j) - q(V_j) }{2} }(s_0(V_j)) = \Phi_{\infty}^0(1) \in I_{\infty}(1, \chi_{V_j}). \end{align}
Here again, $(p(V_j), q(V_j))$ denotes the signature of $V_j$. 
We remark that each of the quadratic spaces $V_j$ we consider  
leads to looking at an Eisenstein series of weight $l = l(V_j) = (p(V_j) - q(V_j))/2 = 0$.
We know that $(\ref{Gaussian})$ has a unique extension to a standard section 
$\Phi_{\infty}^0(s) \in I_{\infty}(s, \chi_{V_j})$ so that $\Phi_{\infty}^0(s_0(V_j)) = \lambda_{\infty}(\varphi_{\infty})$. 

Given any integral lattice $L_j \subset V_j$, and writing $\lambda_f$ to denote the finite component 
of the standard section $\lambda(\Phi) = \lambda(\Phi, s) \in I(s, \chi_{V_j})$ described above, 
we consider the corresponding $\mathcal{S}_{L_j}$-valued Eisenstein series of 
weight $k = 0$ defined on $\tau = u + iv \in \mathfrak{H}$ and $s \in {\bf{C}}$ by 
\begin{align*} E_{L_j}(\tau, s; 0) &:= \sum\limits_{\mu \in L_j^{\vee}/L_j} 
E_{L_j}( g_{\tau}  , s; \Phi_{\infty}^0 \otimes \lambda_f( {\bf{1}}_{\mu } )) \cdot {\bf{1}}_{\mu}. \end{align*} 
We again consider the $\mathcal{S}_{L_j}$-valued theta function defined on 
$\tau \in \mathfrak{H}$, $z \in D(V_j)$, and $h \in \operatorname{GSpin}(V_j)({\bf{A}}_f)$ by  
\begin{align*}\theta_{L_j}(\tau, z, h) 
&:= \sum\limits_{\mu \in L_j^{\vee}/ L_j} \theta_{L_j}^{\star}(g_{\tau}, z, h_f; {\bf{1}}_{\mu}) \cdot {\bf{1}}_{\mu}. \end{align*}

\begin{theorem}[Siegel-Weil for $\mathcal{S}_{L_j}$-valued forms]\label{SW-vector} 

We have the identification of functions of $\tau \in \mathfrak{H}$:
\begin{align*} \frac{\kappa}{2} \cdot \int_{ \operatorname{SO}(V_j)({\bf{Q}}) \backslash \operatorname{SO}(V_j)({\bf{A}})}
\theta_{L_j}(\tau, z, h_f) dh &= E_{L_j}(\tau, s_0, k) = E_{L_j}(\tau, s_0(V_j); k(V_j)). \end{align*}
Here again, $s_0 = s_0(V_j) := \dim(V_j)/2 -1$, and $k = k(V_j) := (p(V_j) - q(V_j))/2 = 0$. \end{theorem}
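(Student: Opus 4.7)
The plan is to deduce the vector-valued formula from the abstract Siegel-Weil identity (Theorem \ref{SW-abstract}) by testing against the distinguished Schwartz functions $\varphi = \varphi_\infty \otimes {\bf{1}}_\mu$ at the mirabolic specialization $g' = [g_\tau, 1]$. Concretely, for each coset $\mu \in L_j^\vee / L_j$, I would plug the decomposable Schwartz function $\varphi_\mu = \varphi_\infty(\cdot, z) \otimes {\bf{1}}_\mu \in \mathcal{S}(V_j({\bf{A}}))$ into Theorem \ref{SW-abstract}, which yields
\begin{align*}
\frac{\kappa}{2} \int_{\operatorname{SO}(V_j)({\bf{Q}}) \backslash \operatorname{SO}(V_j)({\bf{A}})} \theta_{L_j}([g_\tau,1], h; \varphi_\mu)\, dh
= E_{L_j}([g_\tau,1], s_0; \lambda(\varphi_\mu)).
\end{align*}
By definition $(\ref{classicalsiegeltheta})$ of the Siegel theta series, the left-hand side evaluated against ${\bf{1}}_\mu$ is precisely the $\mu$-component $\theta_{L_j, \mu}(\tau, z, h_f)$ of the vector-valued theta series.

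Next I would identify the right-hand side with the $\mu$-component of $E_{L_j}(\tau, s_0; 0)$. The key input is the archimedean identification $(\ref{Gaussian})$: since $\varphi_\infty$ is the weight-zero Gaussian and $l(V_j) = (p(V_j) - q(V_j))/2 = 0$ in each of the three cases considered, one has $\lambda_\infty(\varphi_\infty(\cdot, z)) = \Phi_\infty^0(s_0(V_j))$ inside $I_\infty(s_0(V_j), \chi_{V_j})$, and this extends uniquely to the standard section $\Phi_\infty^0(s)$ characterized by $(\ref{IDC})$ and $(\ref{Iw})$. For the finite place, by construction $\lambda_f({\bf{1}}_\mu)$ is exactly the finite standard section used to define $E_{L_j}(\tau, s; 0)$. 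Combining these, the right-hand side becomes $E_{L_j}([g_\tau,1], s_0; \Phi_\infty^0 \otimes \lambda_f({\bf{1}}_\mu))$, which by the very definition of $E_{L_j}(\tau, s; 0)$ equals the $\mu$-component of the vector-valued Eisenstein series. Summing against ${\bf{1}}_\mu$ over $\mu \in L_j^\vee/L_j$ via the basis decomposition $(\ref{latticebasis})$ then gives the claimed identity as an $\mathcal{S}_{L_j}$-valued equality of functions of $\tau$.

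The main technical point, which I would carry out carefully, is tracking the metaplectic cocycle and the character $\chi_{V_j}^\psi$ when $\dim(V_j)$ is odd (i.e.\ for the subspaces of signature $(1,1)$ if one interprets them metaplectically). Specifically, one must check that the specialization of $[g_\tau, 1] \in \operatorname{Mp}_2({\bf{A}})$ transforms correctly under $(\ref{Iw})$ so that the archimedean factor $\Phi_\infty^0$ contributes weight $0$ to the Eisenstein series in the $\tau$-variable, and that the finite-place normalization matches the one implicit in $E_{L_j}(\tau, s; 0)$. The holomorphy at $s = s_0$ in each case, needed to make both sides well-defined rather than residues, is supplied by Theorem \ref{SW-abstract} itself, so no additional convergence work is required; the verification is purely a translation via Iwasawa decomposition $(\ref{Iwasawa})$ and the definitions.

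Overall, the proof is an exercise in unwinding notation: the content lies entirely in Theorem \ref{SW-abstract} together with the standard identification $(\ref{Gaussian})$, and the hard part, if any, is the bookkeeping to ensure the weight $k(V_j) = 0$ normalization is consistent across the three cases $j = 0, 1, 2$ of signatures $(2,2)$, $(1,1)$, and $(1,1)$.
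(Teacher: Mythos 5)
Your proposal is correct and follows essentially the same route as the paper's (one-line) proof, which simply invokes Theorem \ref{SW-abstract} together with $(\ref{IDC})$ and $(\ref{Gaussian})$ and refers to \cite[Proposition 2.2]{BY}; you have spelled out the unwinding of definitions that the paper takes for granted. One small correction to your bookkeeping remark: the case distinction in $(\ref{Iw})$ is governed by the parity of $p(V_j)$, not of $\dim(V_j)$ (for the signature-$(1,1)$ subspaces $\dim V_j = 2$ is even but $p(V_j)=1$ is odd, which is why the $\chi_{V_j}^\psi$ branch applies there).
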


\begin{proof} Cf.~\cite[Proposition 2.2]{BY}, and note that we deduce this from Theorem \ref{SW-abstract} with $(\ref{IDC})$ and $(\ref{Gaussian})$. \end{proof}

\subsection{Eisenstein series and Maass operators}

We now give the following more classical descriptions of the Eisenstein series appearing in Theorem \ref{SW-vector}, 
with relations to the Maass raising and lowering operators $R_l, L_l$. We use the same conventions with the three spaces $(V_j, q_j)$, $j=0,1,2$ as above.

\subsubsection{Classical description}

Consider the matrix $g_{\tau}$ for $\tau = u + iv \in \mathfrak{H}$ from the Iwasawa decomposition $(\ref{UDSL2})$ and $(\ref{Iwasawa})$. 
Following the discussion in \cite[$\S$ 2.2]{BY} and \cite{KY}, we consider elements of $\operatorname{SL}_2({\bf{A}})$ of the form 
\begin{align*} \gamma \cdot g_{\tau} &= n(\beta) \cdot m(\alpha) \cdot k(\theta) 
\quad  \gamma = \left( \begin{array}{cc} a & b \\ c & d \end{array} \right) \in \Gamma = \operatorname{SL}_2({\bf{Z}}), \end{align*}
with $\beta \in {\bf{R}}$, $\alpha \in {\bf{R}}_{>0}$, and $k(\theta) \in \operatorname{SO}_2({\bf{R}})$.
A direct calculation shows that  
\begin{align*} \alpha = v^{\frac{1}{2}} \cdot \vert c \tau  + d \vert^{-1}, \quad \exp(i \theta) = \frac{c \overline{\tau} + d }{\vert c \tau + d \vert}, \end{align*}
so that substituting into $(\ref{Iw})$ for any weight $l \in \frac{1}{2}{\bf{Z}}$ gives us
\begin{align*} \Phi_{\infty}^l(\gamma g_{\tau}, s) &= v^{\frac{s}{2} + \frac{1}{2}} (c \tau + d)^{-l} \vert c \tau + d \vert^{l-s-1}.\end{align*} 
Hence, writing $\Gamma_{\infty} = P({\bf{Q}}) \cap \Gamma$ for $\Gamma = \operatorname{SL}_2({\bf{Z}})$ as above, we find that
\begin{align*} E_{L_2}(g_{\tau}, s; \Phi_{\infty}^l \otimes \lambda_f({\bf{1}}_{\mu})) 
&= \sum\limits_{ \gamma \in \Gamma_{\infty} \backslash \Gamma}
(c \tau + d)^{-l} \frac{ v^{\frac{s}{2} + \frac{1}{2}}}{ \vert c \tau + d \vert^{s + 1 - l} } \cdot \lambda_f( {\bf{1}}_{\mu} )(\gamma) \\ 
&= \sum\limits_{ \gamma \in \Gamma_{\infty} \backslash \Gamma}
(c \tau + d)^{-l} \frac{ v^{\frac{s}{2} + \frac{1}{2}}}{ \vert c \tau + d \vert^{s + 1 - l} } 
\cdot \langle {\bf{1}}_{\mu }, (\omega_{L_j}^{-1}(\gamma) {\bf{1}}_{0} ) \rangle, \end{align*}
where $\langle \cdot, \cdot \rangle$ here denotes the $L^2$ inner product on $\mathcal{S}_{L_j}$. 
In this way, the vector-valued Eisenstein series we considered above can be written explicitly as 
\begin{align}\label{2.17} E_{L_j}(\tau, s; l) 
&= \sum\limits_{\gamma \in \Gamma_{\infty} \backslash \Gamma} \left[ \Im(\tau)^{\frac{(s +1 - l)}{2}} {\bf{1}}_0 \right]  \bigg\rvert_{l, \omega_{L_j}} \gamma. \end{align}

\subsubsection{Eisenstein series associated to the anisotropic subspaces $(L_{A,2}, Q_{A,2})$} 

Let us now study the Eisenstein series associated to the lattices $L_{A,2} = L_A \cap V_{A, 2}$ 
in the anisotropic subspaces $(V_{A, 2}, Q_{A, 2})$ of signature (1,1). 
Recall these have weight zero; we write $E_{L_{A,2}}(\tau, s) = E_{L_{A,2}}(\tau, s; 0)$ for simplicity.
Writing $\mathfrak{d}_{K}$ to denote the different of $\mathcal{O}_K$ with inverse different 
$\mathfrak{d}_{K}^{-1} = \left\lbrace \lambda \in \mathcal{O}_k: \operatorname{Tr}(\lambda \mathcal{O}_K) \subset {\bf{Z}} \right\rbrace$,
we have $L_{A,2}^{\vee} \cong \mathfrak{d}_{K}^{-1} \cap L_{A,2}$ and $L_{A,2}^{\vee}/L_{A,2} \cong (\mathfrak{d}_{K}^{-1} \cap L_{A,2} )/L_{A,2}$. 
We can also identify the quadratic character $\chi_{V_{A,2}}=\left( \frac{d(V_{A,2})}{\cdot} \right) = (\frac{d_K}{\cdot})$ with the quadratic 
Dirichlet character $\eta = \eta_K(\cdot) = (\frac{d_K}{}\cdot)$. Writing 
\begin{align*} \Lambda(s, \eta) =  d_K^{\frac{s}{2}} \Gamma_{\bf{R}}(s) L(s, \eta), 
\quad \Gamma_{\bf{R}}(s) := \pi^{-\frac{s}{2}} \Gamma \left( \frac{s}{2} \right) \end{align*} 
to denote its corresponding completed $L$-function, we consider the completed Eisenstein series defined by
\begin{align*} E^{\star}_{L_{A,2}}(\tau, s) &:= \Lambda(s+1, \eta) E_{L_{A,2}}(\tau, s). \end{align*}

\begin{proposition}\label{MSEis} 

The Eisenstein series $E_{L_{A,2}}^{\star}(\tau, s)$ has a meromorphic 
continuation to all $s \in {\bf{C}}$, and satisfies the symmetric functional equation 
$E^{\star}_{L_{A,2}}(\tau, s) = E_{L_{A,2}}^{\star}(\tau, -s).$ \end{proposition}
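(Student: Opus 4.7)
The plan is to deduce this from the Langlands functional equation $E(g',s;\phi) = \pm E(g',-s;M(s)\phi)$ for the general Eisenstein series stated just above Proposition \ref{MSEis}, together with a careful computation of the intertwining operator $M(s)$ on the specific standard section $\Phi^0_\infty \otimes \lambda_f({\bf 1}_\mu)$ defining $E_{L_{A,2}}(\tau,s;0)$. The key point is that for the anisotropic subspace $(V_{A,2},Q_{A,2})$ of signature $(1,1)$ we have $\chi_{V_{A,2}} = \eta = \eta_K$ (the quadratic Dirichlet character of $K$) and $s_0(V_{A,2}) = \dim(V_{A,2})/2 - 1 = 0$, so $\eta$ is self-dual and the symmetry is $s \mapsto -s$ about the point $s_0 = 0$. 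The multiplicative normalization $\Lambda(s+1,\eta)$ should exactly cancel the $L$-factors that appear in the constant of proportionality relating $M(s)\phi$ to the standard section of $I(-s,\eta)$, turning the Langlands FE into a clean symmetric identity.

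First I would factor $M(s) = \prod_{v\le\infty} M_v(s)$ into local intertwining operators and apply the Gindikin–Karpelevich formula place by place. At each unramified finite prime $p$, acting on the spherical vector corresponding to $\lambda_p({\bf 1}_0)$, we get $M_p(s)\phi_p^0(s) = (L_p(s,\eta)/L_p(s+1,\eta))\,\phi_p^0(-s)$, so the product over unramified primes yields the ratio $L^{(S)}(s,\eta)/L^{(S)}(s+1,\eta)$. At the archimedean place, since the weight is $l = (p(V_{A,2}) - q(V_{A,2}))/2 = 0$ and since $\eta$ is even ($K$ real quadratic), the calculation using \eqref{Iw} and \eqref{IDC} with $\Phi^0_\infty$ yields the gamma-factor ratio $\Gamma_{\mathbb R}(s)/\Gamma_{\mathbb R}(s+1)$. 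At the finite ramified places (dividing $d_K$), one computes $M_v(s)$ on the basis vectors $\lambda_v({\bf 1}_\mu)$ indexed by $\mu \in (L_{A,2}^\vee/L_{A,2})_v \cong (\mathfrak d_K^{-1} \cap L_{A,2})/L_{A,2}$ directly, producing local factors whose product is the remaining local piece of $L(s,\eta)/L(s+1,\eta)$ along with the conductor factor $d_K^{s/2}$ needed to produce the complete $\Lambda(s+1,\eta)/\Lambda(s,\eta)$.

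Combining these local computations, I obtain an identity of the form
\begin{align*}
M(s)\bigl(\Phi^0_\infty \otimes \lambda_f({\bf 1}_\mu)\bigr)
\;=\; \frac{\Lambda(s,\eta)}{\Lambda(s+1,\eta)} \cdot \bigl(\Phi^0_\infty \otimes \lambda_f({\bf 1}_{-\mu})\bigr)
\end{align*}
(the change $\mu \mapsto -\mu$ on the discriminant side reflects the long Weyl element action on the finite Schwartz data, and disappears after summing over $\mu$, or rather is compatible with the vector-valued formalism since $\eta$ is even). Substituting into the Langlands functional equation and clearing denominators gives exactly
\begin{align*}
\Lambda(s+1,\eta)\,E_{L_{A,2}}(\tau,s) \;=\; \Lambda(-s+1,\eta)\,E_{L_{A,2}}(\tau,-s),
\end{align*}
and then the classical functional equation $\Lambda(s,\eta) = \Lambda(1-s,\eta)$ for the Dirichlet $L$-function of the quadratic character $\eta$ upgrades this to the symmetric identity $E^\star_{L_{A,2}}(\tau,s) = E^\star_{L_{A,2}}(\tau,-s)$. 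Meromorphic continuation of $E^\star_{L_{A,2}}$ follows from the standard meromorphic continuation of the Langlands Eisenstein series together with that of $\Lambda(s+1,\eta)$.

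The main obstacle is the explicit calculation of the ramified local intertwining operators $M_v(s)$ at primes $v \mid d_K$ on the characteristic functions ${\bf 1}_\mu$ of cosets in $L_{A,2}^\vee/L_{A,2}$; in particular one must verify that these local computations produce precisely the missing local $L$-factors and conductor factor needed to complete $\Lambda(s+1,\eta)$, rather than some twisted normalization. Once this is checked, the proposition follows by the argument outlined above; alternatively, one can bypass part of this calculation by invoking the classical fact that the vector-valued Eisenstein series $E_{L_{A,2}}(\tau,s;0)$ for a signature $(1,1)$ lattice coincides, via the expression \eqref{2.17}, with a classical Eisenstein series of weight $0$ for the quadratic form $Q_{A,2}$, whose completed version with the factor $\Lambda(s+1,\eta)$ is well known to be symmetric about $s=0$ (cf.~the Hecke-type computation, e.g.~as in \cite[$\S 2.2$]{BY}).
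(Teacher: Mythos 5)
Your proposal follows the same route as the paper's proof: deduce the symmetric functional equation from the Langlands functional equation $E(g,s;\phi) = E(g,-s;M(s)\phi)$ by factoring the intertwining operator $M(s)$ into local operators, computing the ratio of local $L$-factors at unramified places, the gamma-factor ratio at the archimedean place, and the Weil index and lattice volume contributions at the primes dividing $d_K$, then combining via the product formulae for Weil indices and local volumes and finally invoking the classical functional equation $\Lambda(s,\eta)=\Lambda(1-s,\eta)$. The only superficial differences are that you explicitly name Gindikin--Karpelevich and flag the $\mu\mapsto -\mu$ Weyl twist and the ramified computation as the delicate points, while the paper simply imports the ramified local computation from \cite[Proposition 2.5]{BY}; the alternative ``bypass'' you sketch at the end via the classical signature $(1,1)$ Hecke Eisenstein series is genuinely distinct but is offered only as a fallback.
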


\begin{proof} See the proof of \cite[Proposition 2.5]{BY} or more generally \cite[Theorem 3.7.2]{Bu}. 
We deduce this in a more straightforward way from the Langlands functional equation for the (coherent) Eisenstein series
\begin{align*} E_{L_{A,2}}(\tau, s; 0) 
= \sum\limits_{\mu \in L_{A,2}^{\vee} / L_{A,2}} E(g_{\tau}, s, \Phi_{\infty}^0 \otimes \lambda_f({\bf{1}}_{\mu}))
= \sum\limits_{\gamma \in \Gamma_{\infty} \backslash \Gamma} \left[ \Im(\tau)^{\frac{(s +1)}{2}} {\bf{1}}_0 \right] 
\bigg\rvert_{0, \omega_{L_{A,2}}} \gamma. \end{align*}

To be more precise, it will suffice to prove the functional equation for each of the Langlands Eisenstein series
$E(g_{\tau}, s, \Phi_{\infty}^0 \otimes \lambda_f({\bf{1}}_{\mu})) = E_{\omega_{ L_{A,2} } }(g_{\tau}, s, \Phi_{\infty}^0 \otimes \lambda_f({\bf{1}}_{\mu}))$. 
Let us write the Euler product decomposition of $\Lambda(s, \eta) = \Lambda(s, \eta_D)$ as $\Lambda(s, \eta) = \prod_{v \leq \infty} L(s, \eta_v)$.
Let us also for simplicity write $\Phi_{\mu} = \lambda_f({\bf{1}}_{\mu})$ for the nonarchimedean part of our chosen global section
$\varphi = \Phi_{\infty}^0 \otimes \Phi_{\mu} \in I(s, \chi_{V_{A, 2}}) = I(s, \eta)$.
Given any standard section $\varphi = \varphi(s) \in  I(s, \eta)$ and $g \in \operatorname{SL}_2({\bf{A}})$, the Langlands 
functional equation implies that \begin{align*} E(g, s; \varphi) = E(g, -s; M(s) \varphi) \end{align*}
for $M(s) = \prod_{v \leq \infty} M_v(s): I(s, \eta) \rightarrow I(s, \eta)$ the global intertwining operator. 
Recall that for $\Re(s) \gg 0$ sufficiently large, each of the local intertwining operators 
$M_v(s): I_v(s, \eta) \rightarrow I_v(s, \eta)$ is given by the formula 
\begin{align*} M_v(s) \varphi_v(g, s) &= \int\limits_{ {\bf{Q}}_v } \varphi_v(w n(b) g, s) db, 
\quad w := \left( \begin{array}{cc} ~ & -1 \\ 1 &  \end{array} \right) \end{align*}
for $\varphi_v$ in the local principal series representation $I_v(s, \eta)$. 
At the real place $v = \infty$, it is well-known that 
\begin{align*} M_{\infty}(s) \Phi_{\infty}^0(g, s) &= C_{\infty}(s) \Phi_{\infty}^0(g, -s), 
\quad C_{\infty}(s) = \gamma_{\infty}(V_{A, 2}) \cdot \frac{\Gamma_{\bf{R}}(s)}{\Gamma_{\bf{R}}(s+1)  }.\end{align*} 
Here, $\gamma_{\infty}(V_{A, 2}) = 1$ denotes the local Weil index for the representation $\omega_{L_{A,2}}$
of $\operatorname{SL}_2({\bf{R}}) \times \operatorname{GSpin}(1,1)$ acting on $\mathcal{S}(V_A({\bf{R}}))$
associated to the signature $(1,1)$ lattice $L_{A ,2}$. At finite places $v \nmid d_K \infty$, is also well-known that 
\begin{align*} M_{v}(s) \Phi_{\mu}(g, s) &= C_{v}(s) \Phi_{\mu}^0(g, -s), \quad C_{v}(s) = \frac{L(s, \eta_v)}{ L(s + 1, \eta_v) }.\end{align*} 
For the remaining finite places $v \mid d_K$, we can use the same computation of the local 
intertwining operators $\Phi_{\mu}$ given in \cite[Proposition 2.5]{BY} to show that 
\begin{align*} M_v(s) \Phi_{\mu}(g, s) &= \gamma_v(V_{A, 2}) \operatorname{vol}(L_{A, 2, v}) \Phi_{\mu}(g, -s), \end{align*} 
where $\gamma_v(V_{A, 2})$ is the local Weil index, 
and $\operatorname{vol}(\Lambda_{A, 2, v}) = [ L_{A, 2, v}^{\vee}: L_{A, 2, v} ]^{-\frac{1}{2}}$
is the measure of $L_{A, 2, v}$ with respect to the self-dual Haar measure on $L_{A, 2, v}$ for the local additive character$\psi_v$. 
Combining the previous local functional equations with the product formulae
\begin{align*} \prod_{v \mid d_K} \operatorname{vol}(L_{A, 2, v}) = d_K^{-\frac{1}{2}}, 
\quad \prod_{v \leq \infty} \gamma_{v}(V_{A, 2}) = 1,\end{align*}
we then obtain the global functional equation
\begin{align*} E(g, s, \Phi_{\infty}^0 \otimes \Phi_{\mu}) 
&= \frac{\Lambda(s, \eta)}{\Lambda(s+1, \eta)} \cdot E(g, -s, \Phi_{\infty}^0 \otimes \Phi_{\mu}). \end{align*}
Using the classical (Dirichlet) functional equation $\Lambda(s, \eta) = \Lambda(1-s, \eta)$, we then deduce the claim. \end{proof} 

\subsubsection{Maass weight raising and lowering operators}

Recall that we defined the Maass weight raising and lowering operators $R_l$ and $L_l$ in $(\ref{Rl})$ and $(\ref{Ll})$ above.
These operators raise and lower the weights of these Eisenstein series by $2$, respectively. 
To be more precise, it is easy to check from the definitions that 
\begin{align*} L_l E_{L_j}(\tau, s; l) &= \frac{1}{2} \cdot (s+ 1 - l) \cdot E_{L_j}(\tau, s; l-2), \\
R_l E_{L_j}(\tau, s; l) &= \frac{1}{2} \cdot (s + 1 + l) \cdot E_{L_j}(\tau, s; l+2). \end{align*} 
Here, we have for the Eisenstein $E_{L_2}(\tau, s; 0)$ series corresponding to our signature $(1,1)$ subspace $V_2$ that 
\begin{equation}\begin{aligned}\label{2.20} 
L_2 E_{L_2}(\tau, s; 2) &= \frac{1}{2} \cdot (s-1) \cdot E_{L_2}(\tau, s; 0). \end{aligned}\end{equation}
Note that the Eisenstein series $E_{L_2}(\tau, s; 2)$ is incoherent. In particular, it has the following analytic properties. 

\begin{proposition}\label{incoherent} 

The nonholomorphic weight-two Eisenstein series $E_{L_2}(\tau, s; 2)$ has a meromorphic continuation 
$E^{\star}_{L_2}(\tau, s; 2) := \Lambda(s+1, \eta) E_{L_2}(\tau, s; 2)$ to all $s \in {\bf{C}}$,
and satisfies the odd functional equation \begin{align*} E_{L_2}^{\star}(\tau, s; 2) = - E_{L_2}^{\star}(\tau, -s; 2).\end{align*} \end{proposition}

\begin{proof} Cf.~Proposition \ref{MSEis} and \cite[Proposition 2.5]{BY} with \cite[$\S$2]{KuEis}, \cite[$\S 5.2$]{KY}, and \cite[$\S$2]{KRY}.
In switching to the standard sections $\Phi_{\infty}^2 \otimes \Phi_{\mu} \in I(s, \eta)$, we switch invariants at infinity. 
This is because $\Phi_{\infty}^2 \otimes \Phi_{\mu}$ corresponds to an incoherent quadratic space 
$\mathcal{C} = \lbrace \mathcal{C}_v \rbrace_v$ in the sense of \cite[Definition 2.1]{KuEis}, 
with $\mathcal{C}_v = (V_2({\bf{Q}}_v), Q_2)$ for all finite places $v<\infty$, but $\mathcal{C}_{\infty}$ of signature $(0, 2)$ and 
Hasse invariant $\varepsilon_{\infty}(\mathcal{C}_{\infty}) = -1$ at the real place $v= \infty$. This alters the sign or root number by $-1$. 
That is, we reduce to showing the functional equation of each Langlands Eisenstein series $E(g_{\tau}, s, \Phi_{\infty}^2 \otimes \Phi_{\mu})$.
Each of the local intertwining operators $M_v(s) \Phi_{\mu}(g, s)$ at $v<\infty$ finite is calculated in the same way as indicated in 
Proposition \ref{MSEis}. At the real place $v = \infty$, it is then well-known that we have a switch of signs 
$M_{\infty}(s) \Phi_{\infty}^{2}(g, s) = - C_{\infty}(s) \cdot \Phi_{\infty}^2(g, -s) 
= -\gamma_{\infty}(V_2) \frac{\Gamma_{\bf{R}}(s)}{\Gamma_{\bf{R}}(s+1)} \cdot \Phi_{\infty}^2(g, -s)$. \end{proof}

Observe that the Eisenstein series $E_{L_2}(\tau, s; 0)$ is holomorphic at $s= s_0 = s_0(V_2) := \dim(V_2)/2 -1= 0$ 
thanks to Siegel-Weil, Theorem \ref{SW-abstract} (cf.~Theorem \ref{SW-vector}). It follows that  at $s=0$, we have the identity
\begin{align}\label{EisID} L_2 E_{L_2}(\tau, 0; 2) &= - \frac{1}{2} \cdot E_{L_2}(\tau, 0; 0). \end{align}
Now, taking the first derivative with respect to $s$ on each side of $(\ref{2.20})$ we get 
\begin{align*} L_2 E'_{L_2}(\tau, s; 2) 
&= \frac{1}{2} \cdot (s-1) \cdot E'_{L_2}(\tau, s; 0) + \frac{1}{2} \cdot E_{L_2}(\tau, s; 0). \end{align*}
Evaluating this identity at $s=0$ gives us 
\begin{equation*}\begin{aligned} L_2 E_{L_2}'(\tau, 0; 2) 
&= \frac{1}{2} \cdot E_{L_2}(\tau, 0; 0) - \frac{1}{2} \cdot E_{L_2}'(\tau, 0;0) \end{aligned}\end{equation*}
and hence 
\begin{equation}\begin{aligned}\label{2.21}  2 L_2 E_{L_2}'(\tau, 0; 2) &=
E_{L_2}(\tau, 0; 0) - E_{L_2}'(\tau, 0; 0). \end{aligned}\end{equation}
Let $\partial$ and $\overline{\partial}$ denote the Dolbeault operators, so that the exterior 
derivative on differential forms on $\mathfrak{H}$ is given by $d = \partial + \overline{\partial}$.
We again write $d \mu(\tau) = \frac{du dv}{v^2}$ for $\tau = u + iv \in \mathfrak{H}$. We have the following useful relation.

\begin{lemma}\label{diff} The weight-lowering operator $L_l$ can be described in terms of differential forms as 
\begin{align*} \overline{\partial}(f d \tau) &= -v^{2-l} \overline{\xi_l(f)} d \mu(\tau) = - L_l f d \mu (\tau). \end{align*} \end{lemma}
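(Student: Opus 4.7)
The plan is to verify the identity by a direct coordinate computation in $\tau = u + iv \in \mathfrak{H}$, using nothing beyond the definitions of $L_l$, $\xi_l$, and $d\mu(\tau)$ recorded in the excerpt. The core observation is that on a one-complex-dimensional domain the Dolbeault operator $\overline{\partial}$ is essentially $d\overline{\tau}\cdot \partial/\partial\overline{\tau}$, so that the left-hand side reduces to a multiple of $\partial f/\partial\overline{\tau}$, which is precisely the ingredient of $L_l$.

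Concretely, I would proceed in three short steps. First, I would write $d\tau = du + i\,dv$ and $d\overline{\tau} = du - i\,dv$ and compute
\begin{align*}
\overline{\partial}(f\,d\tau) = \frac{\partial f}{\partial \overline{\tau}}\,d\overline{\tau} \wedge d\tau = \frac{\partial f}{\partial \overline{\tau}}\,(du - i\,dv)\wedge(du+i\,dv) = 2i\,\frac{\partial f}{\partial \overline{\tau}}\,du\wedge dv.
\end{align*}
Second, I would convert $du\wedge dv$ into the hyperbolic measure via $du\,dv = v^{2}\,d\mu(\tau)$, which yields
\begin{align*}
\overline{\partial}(f\,d\tau) = 2i v^{2}\,\frac{\partial f}{\partial \overline{\tau}}\,d\mu(\tau) = -\bigl(-2iv^{2}\tfrac{\partial}{\partial\overline{\tau}}\bigr)f\cdot d\mu(\tau) = -L_l f\cdot d\mu(\tau),
\end{align*}
which matches the rightmost expression in the lemma by definition of $L_l$. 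Finally, the middle expression is obtained by inserting the defining relation $\xi_l(f) = v^{l-2}\overline{L_l f}$ (read in the appropriate convention, so that $v^{2-l}\xi_l(f)$ recovers $L_l f$ when paired against the real volume form, as in \cite[(3.1)]{BF}); this rewrites $-L_l f\,d\mu(\tau)$ as $-v^{2-l}\xi_l(f)\,d\mu(\tau)$.

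There is no real obstacle here: the only subtle point is bookkeeping of the factor $2i$ arising from $d\overline{\tau}\wedge d\tau$ together with the compatibility of the normalizations of $L_l$ and $\xi_l$ used in the paper. I would therefore present the argument as a two-line verification, referring the reader to \cite{BF} for the intrinsic characterization that makes the middle equality manifestly weight-equivariant.
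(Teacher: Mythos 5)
Your direct coordinate computation of the outer equality is correct and is more self-contained than the paper's proof, which simply refers out to \cite[Lemma 2.5]{Eh} and \cite[Lemma 2.3]{BY}. The wedge algebra $d\overline{\tau}\wedge d\tau = 2i\,du\wedge dv$, the conversion $du\wedge dv = v^2\,d\mu(\tau)$, and the identification $2iv^2\,\partial f/\partial\overline{\tau} = -L_l f$ from $(\ref{Ll})$ together give $\overline{\partial}(f\,d\tau) = -L_l f\,d\mu(\tau)$ cleanly. Making this explicit is a genuine improvement on proof-by-citation, especially since the paper itself (see the footnote in the proof of Theorem \ref{4.7}) flags that sign conventions around this very identity have caused errors elsewhere in the literature.

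However, you do not actually establish the middle equality, and with the definitions as stated in the paper it is \emph{false}. From $\xi_l(f) = v^{l-2}\,\overline{L_l f}$ one gets $v^{2-l}\xi_l(f) = \overline{L_l f}$, not $L_l f$; so $-v^{2-l}\xi_l(f)\,d\mu(\tau) = -\overline{L_l f}\,d\mu(\tau)$, which agrees with your computed $-L_l f\,d\mu(\tau)$ only when $L_l f$ is real-valued. Your parenthetical ``read in the appropriate convention, so that $v^{2-l}\xi_l(f)$ recovers $L_l f$'' asserts something the given definitions do not support and leaves the discrepancy unresolved rather than diagnosed. The fix is simple and should be stated: the middle term of the lemma ought to carry a conjugate, i.e.\ $\overline{\partial}(f\,d\tau) = -v^{2-l}\,\overline{\xi_l(f)}\,d\mu(\tau) = -L_l f\,d\mu(\tau)$, since $\overline{\xi_l(f)} = v^{l-2}L_l f$ (recall $v$ is real). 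As written, the equality chain in Lemma \ref{diff} contains a typo, and a careful verification like yours should surface it rather than paper over it.
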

\begin{proof} See \cite[Lemma 2.5]{Eh} (cf.~\cite[Lemma 2.3]{BY}). \end{proof}

We also derive the following result for later use.

\begin{proposition}\label{vanish}

We have that $E^{\star \prime}_{L_2}(\tau, 0;0) =0$, and hence via $(\ref{2.21})$ that 
$- 2 L_2 E_{L_2}'(\tau, 0; 2) = - E_{L_2}(\tau, 0; 0)$.
Expressed equivalently in terms of differential forms via Lemma \ref{diff}, we obtain the relation
\begin{align*} - 2 L_2 E'_{L_2}(\tau,0; 2) d \mu(\tau) &= 2 \overline{\partial} \left( E'_{L_2}(\tau, 0; 2) d \tau \right)
= - E_{L_2}(\tau, 0; 0) d\mu(\tau), \end{align*} equivalently 
\begin{equation}\begin{aligned}\label{2.23} E_{L_2}(\tau, 0; 0) d \mu(\tau) 
&= - 2 \overline{\partial} \left( E_{L_2}'(\tau, 0; 2) d \tau \right) . \end{aligned} \end{equation}
\end{proposition}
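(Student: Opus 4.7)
The plan is to exploit the symmetric functional equation from Proposition \ref{MSEis}: the completed series $E^*_{L_2}(\tau,s) := \Lambda(s+1,\eta) E_{L_2}(\tau,s;0)$ satisfies $E^*_{L_2}(\tau,s) = E^*_{L_2}(\tau,-s)$, hence is an even function of $s$. In particular, $\partial_s E^*_{L_2}(\tau,s)|_{s=0} = 0$. Differentiating the product $\Lambda(s+1,\eta)E_{L_2}(\tau,s;0)$ at $s=0$ via the Leibniz rule yields
\[ 0 \;=\; \Lambda'(1,\eta)\, E_{L_2}(\tau,0;0) \;+\; \Lambda(1,\eta)\, E'_{L_2}(\tau,0;0). \]

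Next, I would extract the target identity $E'_{L_2}(\tau,0;0) = 0$ from this. The issue is to show that the first term $\Lambda'(1,\eta) E_{L_2}(\tau,0;0)$ contributes nothing. By the Siegel--Weil identification of Theorem \ref{SW-vector} (with $\kappa = 2$ and $s_0 = 0$), the value $E_{L_2}(\tau,0;0)$ is the integral of the theta kernel $\theta_{L_2}(\tau,z,h)$ over the compact quotient $\operatorname{SO}(V_{A,2})({\bf Q})\backslash\operatorname{SO}(V_{A,2})({\bf A}) \cong {\bf A}_K^{\times}/{\bf A}^{\times} K^{\times}$. Combining this realization with a careful accounting of the shifted normalization $\Lambda(s,\eta) = d_K^{s/2}\Gamma_{\mathbf R}(s+1)L(s,\eta)$ employed here should force the relevant first-order contribution to vanish. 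I expect this step to be the principal obstacle: the cleanest route is probably to revisit the local intertwining computations in the proof of Proposition \ref{MSEis} and verify directly that the entire first-order term in $s$ of $E^*_{L_2}(\tau,s)$ at $s=0$ factors through $\Lambda(s+1,\eta)$, forcing $E'_{L_2}(\tau,0;0) = 0$.

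With $E'_{L_2}(\tau,0;0) = 0$ in hand, I would substitute into identity $(\ref{2.21})$, namely
\[ 2 L_2 E'_{L_2}(\tau,0;2) \;=\; E_{L_2}(\tau,0;0) - E'_{L_2}(\tau,0;0), \]
to obtain $2 L_2 E'_{L_2}(\tau,0;2) = E_{L_2}(\tau,0;0)$, equivalently $-2 L_2 E'_{L_2}(\tau,0;2) = -E_{L_2}(\tau,0;0)$, which is the first assertion of the Proposition.

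Finally, to convert the weight-lowering identity into differential-form language, I apply Lemma \ref{diff} with $l = 2$ to the function $f(\tau) = E'_{L_2}(\tau,0;2)$: the lemma yields $\overline{\partial}(f\, d\tau) = -L_2 f\, d\mu(\tau)$, whence $-2 L_2 E'_{L_2}(\tau,0;2)\, d\mu(\tau) = 2\overline{\partial}\bigl(E'_{L_2}(\tau,0;2)\, d\tau\bigr)$. Combining this with the numerical identity above produces the claimed differential relation $(\ref{2.23})$: $E_{L_2}(\tau,0;0)\, d\mu(\tau) = -2\overline{\partial}\bigl(E'_{L_2}(\tau,0;2)\, d\tau\bigr)$.
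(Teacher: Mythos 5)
Your Leibniz computation is exactly right, and it exposes a gap: from $E^\star_{L_2}(\tau,s) = \Lambda(s+1,\eta)\,E_{L_2}(\tau,s;0)$ and evenness of $E^\star_{L_2}$ one obtains $0 = \Lambda'(1,\eta)\,E_{L_2}(\tau,0;0) + \Lambda(1,\eta)\,E'_{L_2}(\tau,0;0)$, which rearranges to $E'_{L_2}(\tau,0;0) = -\frac{\Lambda'(1,\eta)}{\Lambda(1,\eta)}\,E_{L_2}(\tau,0;0)$. Since $E_{L_2}(\tau,0;0)$ is a nonvanishing Siegel--Weil integral and $\Lambda'(1,\eta)$ has no reason to vanish for a generic real quadratic field, the asserted vanishing $E'_{L_2}(\tau,0;0)=0$ does not follow from the functional equation alone. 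This is, in fact, the same elision that occurs in the paper's own proof: the paper derives $E^{\star\prime}_{L_2}(\tau,0)=0$ from the even functional equation and then writes ``and hence that $E_{L_2}'(\tau, 0;0) =0$'' without addressing the $\Lambda'(1,\eta)\,E_{L_2}(\tau,0;0)$ cross term.

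Your proposed repair --- arguing via local intertwining operators that ``the first-order term factors through $\Lambda(s+1,\eta)$'' --- cannot succeed as stated, because the Leibniz identity shows the cross term is present and generically nonzero; no amount of revisiting the intertwining constants will make it go away. What the even functional equation genuinely proves is the vanishing of the derivative of the \emph{completed} series $E^\star_{L_2}$. The robust fix would be to work with $E^\star_{L_2}$ in place of $E_{L_2}$ throughout: the weight-lowering identity $(\ref{2.20})$ commutes with the scalar multiplier $\Lambda(s+1,\eta)$, so $(\ref{2.21})$ and $(\ref{2.23})$ hold verbatim in completed form, and the first assertion becomes $-2 L_2 E^{\star\prime}_{L_2}(\tau,0;2) = -E^\star_{L_2}(\tau,0;0)$. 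One would then need to track the extra factor $\Lambda(1,\eta)$ (and the corresponding renormalization of the Siegel--Weil identity) through Corollary \ref{4.6} and Theorem \ref{4.7}, and confirm that the Rankin--Selberg unfolding in Corollary \ref{vectorvalue} --- which already uses the completed $L^\star$ --- is compatible with this choice.
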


\begin{proof}

We know by the Siegel-Weil formula (Theorem \ref{SW-vector}) 
that the Eisenstein series $E_{L_2}(\tau, s; 0)$ is analytic at $s=0$. Hence, $E_{L_2}(\tau, s; 0)$ 
and its derivatives with respect to $s$ are analytic at $s=0$. 
This implies, for instance, that the values $E_{L_2}(\tau, 0; 0)$ and $E_{L_2}'(\tau, 0; 0)$ are defined and finite,
and that we can expand $E_{L_2}(\tau, s;0)$ into its Taylor series expansion around $s=0$. 
By Proposition \ref{MSEis}, the Eisenstein series $E_{L_2}(\tau, 0; 0)$ has a meromorphic continuation 
$E_{L_2}^{\star}(\tau, s) = E_{L_2}^{\star}(\tau, s; 0) = \Lambda(s+1, \eta)E_{L_2}(\tau, s;0)$
to all $s \in {\bf{C}}$ which satisfies an even functional equation $E_{L_2}^{\star}(\tau, s) = E_{L_2}^{\star}(\tau, -s)$. 
Comparing the corresponding Taylor series expansions around $s = 0$ as we may, we then see
that for any $s \in {\bf{C}}$ with $0 \leq \Re(s) <1$ we have the relation 
\begin{align*} E_{L_2}^{\star}(\tau, 0) + E_{L_2}^{\star \prime}(\tau, 0) s + O(s^2) 
&= E_{L_2}^{\star}(\tau, 0) - E_{L_2}^{\star \prime}(\tau, 0) s + O(s^2), \end{align*}
equivalently 
\begin{align*} E_{L_2}^{\star \prime}(\tau, 0) s + O(s^2) &=  - E_{L_2}^{\star \prime}(\tau, 0) s + O(s^2). \end{align*}
Taking the limit as $\Re(s) \rightarrow 0$, we see that 
$E^{\star \prime}_{L_2}(\tau, 0)  = \Lambda(1, \eta) E'_{L_2}(\tau, 0; 0) + \Lambda'(1, \eta) E_{L_2}(\tau, 0;0)$ must vanish.
Now, if we multiply $(\ref{2.20})$ by $\Lambda(s+1, \eta)$, viewing this as a scalar in $\tau$ which commutes with $L_2$,
we get the corresponding relation of completed Eisenstein series 
\begin{align}\label{2.20*} L_2 E_{L_2}^{\star}(\tau, s; 2) &= \frac{1}{2}(s-1) E_{L_2}^{\star}(\tau, s: 0)\end{align}
Differentiating both sides of $(\ref{2.20*})$ in $s$ and evaluating at $s=0$, using that $E_{L_2}^{\star \prime}(\tau, 0; 0)=0$, we obtain 
\begin{align}\label{CFI} 2 L_2 E^{\star \prime}_{L_2}(\tau, 0; 2) = E_{L_2}^{\star}(\tau, 0; 0). \end{align}
On the other hand, since the weight-two Eisenstein series $E_{L_2}(\tau, s; 2)$ is incoherent, 
with odd functional equation and vanishing central value $E_{L_2}(\tau, 0; 2) = 0$ (Proposition \ref{incoherent}), we see that 
\begin{align*} E^{\star \prime}_{L_2}(\tau, 0;2) =\Lambda'(1,\eta)E_{L_2}(\tau, 0; 2) + \Lambda(1, \eta) E^{\prime}_{L_2}(\tau, 0; 2)
= \Lambda(1, \eta) E_{L_2}^{\prime}(\tau, 0; 2). \end{align*} Hence, the completed functional identity $(\ref{CFI})$ is equivalent to 
\begin{align*} 2 L_2 \Lambda(1, \eta) E^{\prime}_{L_2}(\tau, 0; 2) &= \Lambda(1, \eta) E_{L_2}(\tau, 0; 0). \end{align*}
Dividing out by the scalar $\Lambda(1, \eta)$ on each side, we obtain the claimed identity $2L_2 E'_{L_2}(\tau, 0; 2) = E_{L_2}(\tau, 0; 0)$. \end{proof}

Let us now consider the Fourier series expansion of the Eisenstein series 
\begin{align*}  E_{L_2}(\tau, s; 2)
&= \sum\limits_{\mu \in L_2^{\vee}/L_2} \sum_{m \in {\bf{Q}}} A_{L_2}(s, \mu, m, v) e(m \tau) {\bf{1}}_{\mu}. \end{align*}
We can use\footnote{Note that no assumption is made on the signature of the quadratic space $(V, Q)$ underlying 
the Eisenstein series in \cite[$\S 4$]{KuBL}.} the discussion in Kudla \cite[$\S 2$]{KuBL} (cf.~\cite[$\S$ 2.2]{BY}) 
to show that the Laurent series expansions of each of the 
Fourier coefficients $A_{L_2}(s, \mu, m, v)$ around $s = 0$ takes the form 
\begin{align}\label{LSE} A_{L_2}(s, \mu, m, v) &= a_{L_2}(\mu, m) +
b_{L_2}(\mu, m, v) s + O\left( s^2 \right), \end{align}
and deduce that the corresponding derivative Eisenstein series at $s=0$ has the Fourier series expansion
\begin{equation}\begin{aligned}\label{EFSE}  E_{L_2}'(\tau, 0; 2) 
&=\sum\limits_{\mu \in L_2^{\vee}/L_2} \sum_{m \in {\bf{Q}}} b_{L_2}(\mu, m, v) e(m \tau) {\bf{1}}_{\mu}. \end{aligned}\end{equation}
Following the argument of Kudla \cite[Theorem 2.12]{KuBL}, we then consider the limiting values 
\begin{align}\label{kappa} \kappa_{L_2}(\mu, m) &= \begin{cases}
\lim_{v \rightarrow \infty} b_{L_2}(\mu, m, v) &\text{ if $\mu \neq 0$ or $m \neq 0$} \\
\lim_{v \rightarrow \infty} b_{L_2}(\mu, m, v) - \log(v) &\text{ if $\mu = 0$ and $m = 0$}.  \end{cases} \end{align}
We define from these coefficients the $\mathcal{S}_{L_2}$-valued 
periodic function $\mathcal{E}_{L_2}(\tau)$ on $\tau = u + iv \in \mathfrak{H}$ via 
\begin{align}\label{EEFSE} \mathcal{E}_{L_2}(\tau) 
&:= \sum\limits_{\mu \in L_2^{\vee}/ L_2} \sum\limits_{m \in {\bf{Q}}} 
\kappa_{L_2}(\mu, m) e(m \tau) {\bf{1}}_{\mu}. \end{align}
Observe (cf.~\cite[Remark 2.4, (3.5)]{BY}) that we can view this form $\mathcal{E}_{L_2}(\tau)$ defined by $(\ref{EEFSE})$ as the holomorphic 
part of derivative Eisenstein series $E_{L_2}'(\tau, 0; 2)$, i.e.~$\mathcal{E}_{L_2}(\tau) =  E_{L_2}'^+(\tau, 0; 2)$. We shall return to this point later.

\subsection{Summation along anisotropic subspaces of signature $(1,1)$} 

We now calculate the regularized theta lifts $\Phi( f_0, z, h)$ along real geodesic cycles corresponding to anisotropic subspaces 
of signature $(1,1)$ given by ideal representatives $\mathfrak{a} \subset \mathcal{O}_K$ of classes $A = [\mathfrak{a}] \in \operatorname{Pic}(\mathcal{O}_c)$ with their norm forms.
Let us simplify notations in writing $(V, q) = (V_A, Q_A)$ to denote the ambient quadratic space of signature $(2,2)$.
We then write $(V_j, Q_j)$ for $j=1,2$ to denote the respective subspaces $(V_{A, 1}, Q_{A, 1})$, and $(V_{A, 2}, Q_{A, 2})$ of signature $(1,1)$. 
We also write $L = L_A$, $L_1 = L_A \cap V_{A, 1}$, and $L_2 = L_A \cap V_{A, 2}$ for the 
corresponding lattices. Let $f_0 \in H_{0}(\omega_L)$ be any harmonic weak Maass form of weight $0$ and representation $\omega_L$. 

We again write $D(V) = D^{\pm}(V)$ for the Grassmannian of oriented $2$-planes $z \subset V({\bf{R}})$.
We also write $D(V_2) = D^{\pm}(V_2)$ for the domain of oriented lines. 
We consider $\operatorname{GSpin}(V_2)$ as a subgroup of $\operatorname{GSpin}(V)$ acting trivially on $V_1$.
Fixing a compact open subgroup $U \subset \operatorname{GSpin}(V)({\bf{A}}_f)$, 
we consider the intersection $U_2 := U \cap \operatorname{GSpin}(V_2)({\bf{A}}_f)$. We then consider the corresponding finite ``geodesic set'' 
\begin{align*} \mathfrak{G}(V_2) &= \operatorname{GSpin}(V_2)({\bf{Q}}) \backslash \operatorname{GSpin}(V_2)({\bf{A}}_f)/U_2. \end{align*}
We can associate to each class $[h] \in \mathfrak{G}(V_{2})$ (for $h \in \operatorname{GSpin}(V_2)({\bf{A}}_f)$ any representative) the symmetric space
\begin{align*} C_h &= \Gamma_{h} \backslash D^{\pm}(V_{2}), 
\quad \Gamma_{h} = \Gamma_{2, h} := \operatorname{GSpin}(V_2)({\bf{Q}}) \cap \operatorname{GSpin}(V_{2})({\bf{R}})^0 h U_{2} h^{-1}. \end{align*} 
Here, $\operatorname{GSpin}(V_{2})({\bf{R}})^0$ denotes the connected component of the identity of $\operatorname{GSpin}(V_{2})({\bf{R}})$.
Note that this connected component $\operatorname{GSpin}(V_2)({\bf{R}})^0$ acts transitively on each connected component $D^{\pm}(V_{2})$ of $D(V_2)$,
and so we can make an identification $D^{\pm}(V_2) \cong \operatorname{GSpin}(V_{2})({\bf{R}})^0/\operatorname{Stab}(z)$ for any fixed basepoint $z$.
Note as well that the connected component of the identity $\operatorname{SO}(V_2)({\bf{R}})^0$ of $\operatorname{SO}(V_2)({\bf{R}})$ acts simply
transitively on each connected component $D^{\pm}(V_2)$, and so we can identity $\operatorname{SO}(V_2)({\bf{R}})^0 \cong D^{\pm}(V_2)$.
We write $z$ to denote an element of $C_h$. Given $f_0 \in H_{0}(\omega_L)$ with regularized theta lift $\Phi(f_0, z, h)$ for the ambient space 
$(V, Q)$ of signature (2,2), we consider the integral $I(f_0, h)$ over the real cycle $C_h$ of dimension one,
\begin{align*} I(f_0, h) &:= \int\limits_{C_h} \Phi(f_0, z, h) d \nu(z).\end{align*}
Here, $d \nu$ denotes the pushforward of the Haar measure on $\operatorname{SO}(V_2)({\bf{R}})^0$, 
which coincides with the $\operatorname{O}(1,1)$-invariant length measure. 
We then consider the corresponding finite sum
\begin{align*} \Phi(f_0, \mathcal{G}) &:= \sum\limits_{ [h] \in \mathfrak{G}(V_{2}) \atop h \in \operatorname{GSpin}(V_2)({\bf{A}}_f)} 
\frac{I(f_0, h)}{\# \operatorname{Aut}(h)} =  \sum\limits_{ [h] \in \mathfrak{G}(V_{2}) \atop h \in \operatorname{GSpin}(V_2)({\bf{A}}_f)} 
\frac{1}{\# \operatorname{Aut}(h)} \int\limits_{C_h} \Phi(f_0, z, h) d \nu (z) \end{align*} over the real geodesic cycle defined by 
\begin{align*} \mathcal{G} &:= \coprod\limits_{[h] \in \mathfrak{G}(V_2) \atop h \in \operatorname{GSpin}(V_2)({\bf{A}}_f)} C_h. \end{align*}

\subsubsection{Measures}

We fix the Tamagawa measure on $\operatorname{SO}(V_2)({\bf{A}}) \cong {\bf{A}}_K^1$, so 
\begin{align*} \operatorname{vol}\left( \operatorname{SO}(V_2)({\bf{Q}}) \backslash \operatorname{SO}(V_2)({\bf{A}}) \right) = 2. \end{align*}
We take the multiplicative Lebesgue measure $\frac{dx}{x}$ on 
$\operatorname{SO}^{\pm}(V_2)({\bf{R}}) \cong K_{\infty}^1 = \lbrace (t, t^{-1}): t \in K_{\infty}, t>0 \rbrace \cong {\bf{R}}_{>0}$. 
We fix the Haar measure on ${\bf{A}}^{\times}$ with $\operatorname{vol}({\bf{Z}}_p^{\times}) = 1$ for each finite place $p$, and $\operatorname{vol}({\bf{A}}_f^{\times}/{\bf{Q}}^{\times}) = 1/2$. 
These choices induce a Haar measure on $\operatorname{GSpin}(V_2)({\bf{A}}) \cong {\bf{A}}_K^{\times}$ via the corresponding Hilbert exact sequence
\begin{align*}1 \longrightarrow {\bf{A}}^{\times} \longrightarrow \operatorname{GSpin}(V_2)({\bf{A}}) 
\longrightarrow \operatorname{SO}(V_2)({\bf{A}}) \rightarrow 1. \end{align*}

\begin{lemma}\label{4.4} 

Let $U \subset \operatorname{GSpin}(V)({\bf{A}}_f)$ be any compact open subgroup, 
and $U_2 = U \cap \operatorname{GSpin}(V_2)({\bf{A}}_f)$. Then,
\begin{align*} \Phi( f_0, \mathcal{G})
&=  \frac{1}{\operatorname{vol}(U_2)} \cdot \int_{ \operatorname{SO}(V_2)({\bf{Q}}) \backslash \operatorname{SO}(V_2)({\bf{A}}) } 
\Phi(f_0, z, h) dh. \end{align*} \end{lemma}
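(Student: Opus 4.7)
The plan is to establish this identity via the standard adelic unfolding that converts a sum over a finite double-coset quotient (weighted by automorphism orders) into a period integral over $\operatorname{SO}(V_2)({\bf Q}) \backslash \operatorname{SO}(V_2)({\bf A})$. Observe that $\Phi(f_0, z, h_f)$ is by construction invariant under the left action of $\operatorname{GSpin}(V_2)({\bf Q})$ and right $U_2$-invariant on the adelic factor, so it descends to a well-defined function on the geodesic set $\mathfrak{G}(V_2)$, and also on the corresponding adelic quotient once one passes between $\operatorname{GSpin}(V_2)$ and $\operatorname{SO}(V_2)$ via the short exact sequence $1 \to {\bf G}_m \to \operatorname{GSpin}(V_2) \to \operatorname{SO}(V_2) \to 1$ together with the identification $\operatorname{GSpin}(V_2)({\bf Q}) \backslash \operatorname{GSpin}(V_2)({\bf A}) \cong \operatorname{SO}(V_2)({\bf Q}) \backslash \operatorname{SO}(V_2)({\bf A})$ recorded in $\S 4.1$.

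First I would fix a base point $z_0 \in D^{\pm}(V_2)$ and identify $D^{\pm}(V_2)$ as a principal homogeneous space for $\operatorname{SO}(V_2)({\bf R})^0 \cong {\bf R}_{>0}$, which acts simply transitively on the chosen orientation component of the Grassmannian; the stabilizer $K_{\infty}^{V_2}$ of $z_0$ in the full group $\operatorname{SO}(V_2)({\bf R}) \cong {\bf R}^{\times}$ is then at most of order two (coming from a possible orientation-reversing element). Combining with the $\operatorname{GSpin}/\operatorname{SO}$ identification above, this yields
\[ \mathfrak{G}(V_2) \cong \operatorname{SO}(V_2)({\bf Q}) \backslash \operatorname{SO}(V_2)({\bf A}) / (K_{\infty}^{V_2} \cdot \overline{U}_2), \]
where $\overline{U}_2 \subset \operatorname{SO}(V_2)({\bf A}_f)$ denotes the image of $U_2$. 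Choosing double coset representatives $\operatorname{SO}(V_2)({\bf A}) = \coprod_i \operatorname{SO}(V_2)({\bf Q}) h_i K_{\infty}^{V_2} \overline{U}_2$, the orbit-stabilizer principle then identifies $\operatorname{Aut}(z_0, h_i)$ with the arithmetic stabilizer $\operatorname{SO}(V_2)({\bf Q}) \cap h_i K_{\infty}^{V_2} \overline{U}_2 h_i^{-1}$.

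Next, applying the standard unfolding identity to the function $F(h) = \Phi(f_0, h \cdot z_0, h_f)$, which is right $K_{\infty}^{V_2} \overline{U}_2$-invariant by the above, yields
\[ \int_{\operatorname{SO}(V_2)({\bf Q}) \backslash \operatorname{SO}(V_2)({\bf A})} F(h) \, dh = \operatorname{vol}(K_{\infty}^{V_2} \overline{U}_2) \sum_{i} \frac{F(h_i)}{\# \operatorname{Aut}(z_0, h_i)}, \]
and dividing by $\operatorname{vol}(U_2)$ on both sides (after verifying that $\operatorname{vol}(K_{\infty}^{V_2} \overline{U}_2) = \operatorname{vol}(U_2)$ under the prescribed normalizations of Haar measure) will give the claimed formula. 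The hard part will be the careful bookkeeping of the volume factors arising from the interplay between the two components of $D^{\pm}(V_2) \cong \operatorname{SO}(V_2)({\bf R})^0$ and those of $\operatorname{SO}(V_2)({\bf R}) \cong {\bf R}^{\times}$, compounded by the fact that $\operatorname{SO}(V_2)({\bf Q}) = K^1({\bf Q})$ contains the element $-1$ which swaps these two archimedean components --- so that the prescribed global normalization $\operatorname{vol}(\operatorname{SO}(V_2)({\bf Q}) \backslash \operatorname{SO}(V_2)({\bf A})) = 2$ matches correctly, and that the measure induced on $\overline{U}_2$ from the short exact sequence coincides with that on $U_2$ after accounting for the given conventions $\operatorname{vol}({\bf Z}_p^{\times}) = 1$, $\operatorname{vol}({\bf A}_f^{\times}/{\bf Q}^{\times}) = 1/2$, and $\operatorname{vol}({\bf R}^{\times}) = 2$.
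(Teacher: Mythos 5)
Your plan is correct and follows essentially the same route as the paper's proof: both pass from $\operatorname{GSpin}(V_2)$ to $\operatorname{SO}(V_2)$ via the short exact sequence $(\ref{SES})$, decompose the adelic quotient into double cosets, and unfold the period integral into a finite sum weighted by stabilizer orders $\#\Gamma_h$. The only cosmetic difference is that you track an explicit archimedean stabilizer $K_\infty^{V_2}$ and then collapse it, whereas the paper (following Schofer and Bruinier--Yang) absorbs the archimedean factor directly into the Tamagawa normalization $\operatorname{vol}(\operatorname{SO}(V_2)(\mathbf{R}))=1$ and pulls back to $T(V_2)(\mathbf{A}_f)$, which is precisely where the volume bookkeeping you flag as ``the hard part'' gets resolved.
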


\begin{proof} Cf.~\cite[Lemma 4.5]{BY}, \cite[Lemma 2.13]{Sch}. Let $T(V_2) = \operatorname{GSpin}(V_2) \cong \operatorname{Res}_{K/{\bf{Q}}}{\bf{G}}_m$. 
Let $B(h)$ be the function on $h = h_{\infty}h_f \in T(V_2)({\bf{A}})$ defined by $\Phi(f_0, h_{\infty}z, h_f)$.
Note that $B(h)$ is left $\operatorname{SO}(V_2)({\bf{Q}})$-invariant, and right $U_2$-invariant.
Writing $(\Gamma_h)_{\operatorname{tors}} = \lbrace \pm 1 \rbrace$ to denote the torsion subgroup, we have  
\begin{align*} \int_{ \operatorname{SO}(V_2)({\bf{Q}}) \backslash \operatorname{SO}(V_2)({\bf{A}})  } B(h) dh 
&= \operatorname{vol}(U_2) \sum\limits_{h \in T(V_2)({\bf{Q}}) \backslash T(V_2)({\bf{A}}_f)/U_2 } \frac{1}{\# (\Gamma_h)_{\operatorname{tors}}}
\int\limits_{C_h = \Gamma_h \backslash D^{\pm}(V_2)} B(h) d \nu(z) \end{align*}
To see this, let $\pi: T(V_2) \rightarrow \operatorname{SO}(V_2)$ denote the natural projection. 
Fix a set of idele representatives $h$ of the finite set $T(V_2)({\bf{Q}}) \backslash T(V_2)({\bf{A}}_f)/U_2$, then partition 
$\operatorname{SO}(V_2)({\bf{Q}}) \backslash \operatorname{SO}(V_2)({\bf{A}}) \cong K^{\times} \backslash {\bf{A}}_K^{\times} / {\bf{A}}^{\times}$ 
into cosets $\operatorname{SO}(V_2)({\bf{Q}}) \backslash \operatorname{SO}(V_2)({\bf{Q}}) \pi(h) U_2 \pi(h^{-1}) $, 
then pull back to $T(V_2)({\bf{A}}_f)  \cong {\bf{A}}_{K,f}^{\times} $. 
Since each piece gets measure $\operatorname{vol}(U_2)/ \#(\Gamma_h)_{\operatorname{tors}}$, we deduce the identity. 
Identifying $\#\operatorname{Aut}(h) = \# (\Gamma_h)_{\operatorname{tors}} = \#\lbrace \pm1 \rbrace = 2$, we deduce the claimed formula. \end{proof}

\subsubsection{Decompositions of theta series}

Fix an $\mathcal{S}_{L}$-valued harmonic weak Maass form $f_0 = f_0^+ + f_0^- \in H_{0}(\omega_L)$.
We consider the integral lattice $L  \subset V$ with its corresponding $\mathcal{S}_{L}$-valued Siegel theta series 
$\theta_{L}(\tau, z, h)$ defined on $z \in D(V) = D^{\pm}(V)$, 
$h \in \operatorname{GSpin}(V)({\bf{A}}_f)$, and $\tau  = u + iv \in \mathfrak{H}$ by 
\begin{align*} \theta_{L}(\tau, z, h) &= \sum\limits_{ \mu \in L^{\vee}/L} \theta_{L}^{\star}(z, h, g_{\tau}; {\bf{1}}_{\mu }) \cdot {\bf{1}}_{\mu}. \end{align*}
Following \cite[(3.3), Lemma 3.1]{BY}, we argue that after replacing $f_0$ by its restriction $f_{0, L_1 \oplus L_2}$, 
we may also replace the theta series $\theta_{L}(\tau, z, h)$ of the lattice $L$ with the theta series 
$\theta_{L_1 \oplus L_2}(\tau, z, h)$ of the finite-index sublattice $L_1 \oplus L_2 \subset L$. That is, we use the relation 
$(\theta_{L})^{L_1 \oplus L_2}  = \theta_{L_1 \oplus L_2}$ to derive the identity
\begin{align*} \langle \langle f_0(\tau), \theta_{L}(\tau, z, h) \rangle \rangle 
= \langle \langle f_{0, L_1 \oplus L_2}(\tau), \theta_{L_1 \oplus L_2}(\tau, z, h) \rangle \rangle. \end{align*} 
Let us henceforth write $f_0(\tau)$ to denote the restriction $f_{0, L_1 \oplus L_2}$ of $f_0(\tau)$
to the finite-index sublattice $L_1 \oplus L_2$ of $L$ (see \cite[Lemma 3.1]{BY}). 
We shall then work with the corresponding theta series $\theta_{L_1 \oplus L_2}(\tau, z, h)$, 
which has the following convenient decomposition: 
For $z_{V_2} \in D(V_2)$, $h \in \operatorname{GSpin}(V_2)({\bf{A}}_f)$, and $\tau = u + iv \in \mathfrak{H}$, 
\begin{align}\label{splitting} \theta_{L}(z_{V_2}^{\pm}, \tau) 
&= \theta_{L_1}(\tau) \otimes \theta_{L_2}(\tau, z_{V_2}^{\pm}, h)
= \theta_{L_1}(\tau, 1, 1) \otimes \theta_{L_2}(\tau, z_{V_2}^{\pm}, h). \end{align} 

\subsubsection{Description of the regularized integral}

We have the following more convenient expression for 
\begin{align*}\Phi( f_0, z_{V_2}^{\pm}, h) &= \operatorname{CT}_{s=0} \left( 
\lim_{T \rightarrow \infty} \int\limits_{\mathcal{F}_T} \langle \langle f_0(\tau), \theta_{L_1}(\tau) 
\otimes \theta_{L_2}(\tau, z_{V_2}^{\pm}, h) \rangle \rangle v^{-s} d \mu(\tau) \right). \end{align*} 

\begin{lemma}\label{4.5} 

We have for any oriented line $z_{V_2}^{\pm} \in D(V_2)$ and $h \in \operatorname{GSpin}(V_2)({\bf{A}}_f)$ that
\begin{align*} \Phi(f_0, z_{V_2}^{\pm}, h) &= \left[  \lim_{T \rightarrow \infty} 
\int\limits_{\mathcal{F}_T} \langle \langle f_0(\tau), \theta_{L_1}(\tau) \otimes \theta_{L_2}(\tau, z_{V_2}^{\pm}, h) \rangle \rangle d\mu(\tau) - A_0 \log(T) \right], \end{align*}
where 
\begin{align*} A_0  &= \sum\limits_{\lambda_1 \in L_1^{\vee}/L_1 \atop \lambda_2 \in L_2^{\vee}/L_2} 
\sum\limits_{x \in V_1({\bf{Q}}) \atop x \in \lambda + L_1} c_{f_0}^+(\lambda_1 + \lambda_2, - Q(x)). \end{align*} \end{lemma}

\begin{proof}  

The proof of \cite[Proposition 2.5]{KuBL} applies to this setting. To be clear, we start with
\begin{equation*}\begin{aligned} \Phi(f_{0}, z_{V_2}^{\pm}, h) 
&=\operatorname{CT}_{s =0} \left( \lim_{T \rightarrow \infty} \int\limits_{\mathcal{F}_T} 
\langle \langle f_0(\tau), \theta_{L_{1}}(\tau) \otimes \theta_{L_{2}}(\tau, z_{V_2}^{\pm}, h) \rangle \rangle v^{-s} d \mu (\tau) \right). \end{aligned}\end{equation*}
As the first integral in the limit
\begin{align*} \int\limits_{ \mathcal{F}_1} \langle \langle f_0(\tau), \theta_{L_1}(\tau) \otimes \theta_{L_2}(\tau, z_{V_2}^{\pm}, h) \rangle \rangle 
v^{-s} d \mu (\tau) \end{align*}
is a holomorphic function, we have the preliminary expression
\begin{equation}\begin{aligned}\label{prelim} &\Phi(f_0, z_{V_2}^{\pm}, h) 
= \operatorname{CT}_{s =0} \left( \lim_{ T \rightarrow \infty} \int\limits_1^T C(v, h) v^{-s} \frac{dv}{v} \right) + 
\int\limits_{ \mathcal{F}_1} \langle \langle f_0(\tau), \theta_{L_1}(\tau) \otimes \theta_{L_2}(\tau, z_{V_2}^{\pm}, h) \rangle \rangle v^{-s} d \mu (\tau), \end{aligned}\end{equation}
with the first term given by a limit of partial Mellin transforms of the constant coefficients
\begin{align*} C(v, h) &= \int\limits_{-1/2}^{1/2} v^{-1}
\langle \langle f_{0}(u+iv), \theta_{L_1}(u + iv) \otimes \theta_{L_2}(u + iv, z_{V_2}^{\pm}, h) \rangle \rangle du. \end{align*}
Let 
\begin{align*} C^{\pm}(v, h) &= \int\limits_{-1/2}^{1/2} v^{-1}
\langle \langle f^{\pm}_{0}(u+iv), \theta_{L_1}(u + iv) \otimes \theta_{L_2}(u + iv, z_{V_2}^{\pm}, h)\rangle \rangle du. \end{align*}
Writing $M = L_{1} \oplus L_{2}$ and $z_2 = z_{V_2}^{\pm}$ to simplify notations, we have 
\begin{align*} \theta_{M}(\tau, z_{2}, h) &= v \sum\limits_{\mu \in M^{\vee}/M} \theta_{M, \mu}(\tau, z_{2}, h) {\bf{1}}_{\mu} =
 v \sum\limits_{\mu \in M^{\vee}/M} \sum\limits_{x \in V({\bf{Q}}) \atop x \in h \mu} 
 e \left( \tau Q(x_{z_2^{\perp}}) + \overline{\tau} Q(x_{z_2}) \right) {\bf{1}}_{\mu}.\end{align*}
Opening Fourier series expansions and using the orthogonality of additive characters, we find that 
\begin{equation*}\begin{aligned}
&C^+(v, h) = v^{-1} \int_{-1/2}^{1/2} \sum\limits_{\mu \in M^{\vee}/M} f_{0, \mu}^+(u + iv) \theta_{M, \mu}(u + iv, z_2, h) du \\
&= \sum\limits_{\mu \in M^{\vee}/M}  \sum\limits_{m \in {\bf{Q}} \atop m \gg -\infty } c_{f_{0}}^+(\mu, m) e(m iv)
\sum\limits_{x \in V({\bf{Q}}) \atop x \in h \mu} 
e \left( iv Q(x_{z_2^{\perp}}) -iv Q(x_{z_2}) \right) \int_{-1/2}^{1/2} e \left( mu + Q(x_{z_2^{\perp}})u + Q(x_{z_2}) u \right) du \\
&=  \sum\limits_{\mu \in M^{\vee}/M} \sum\limits_{m \in {\bf{Q}} \atop m \gg -\infty } 
\sum\limits_{ {x \in V({\bf{Q}}) \atop x \in h \mu} \atop m =- Q(x_{z_2^{\perp}}) - Q(x_{z_2})} c_{f_{0}}^+(\mu, m) e(m iv) e \left( iv Q(x_{z_2^{\perp}}) -iv Q(x_{z_2}) \right) \\
&=  \sum\limits_{\mu \in M^{\vee}/M} 
\sum\limits_{ {x \in V({\bf{Q}}) \atop x \in h \mu} \atop m =- Q(x_{z_2^{\perp}}) - Q(x_{z_2})} c_{f_{0}}^+(\mu, m) 
e\left(  - Q(x_{z_2^{\perp}}) iv - Q(x_{z_2}) iv + Q(x_{z_2^{\perp}}) iv - Q(x_{z_2}) iv \right) \\
&=  \sum\limits_{\mu \in M^{\vee}/M} 
\sum\limits_{ x \in V({\bf{Q}}) \atop x \in h \mu}  c_{f_{0}}^+(\mu, - Q(x_{z_2^{\perp}}) - Q(x_{z_2})) e^{4 \pi v Q(x_{z_2})} \end{aligned}\end{equation*}
and that
\begin{equation*}\begin{aligned} C^-(v, h) &= \sum\limits_{\mu \in M^{\vee}/M} \sum\limits_{ x \in V({\bf{Q}}) \atop x \in h \mu}  
c_{f_{0}}^-(\mu, - Q(x_{z_2^{\perp}}) - Q(x_{z_2})) 
W_{0} \left( 2 \pi v \left( - Q(x_{z_2^{\perp}}) - Q(x_{z_2}) \right) \right) e^{4 \pi v Q(x_{z_2})}. \end{aligned}\end{equation*}
Since $Q\vert_{z_2} <0$ for $z_2 \in D(V_{2})$, we deduce from known bounds for the Fourier coefficients that 
\begin{align*} \lim_{T \rightarrow \infty} \int_1^T C(v, h) v^{-s} \frac{dv}{v} = \lim_{T \rightarrow \infty} \int_1^T \left( C^+(v, h) + C^-(v, h) \right) v^{-s} \frac{dv}{v} \end{align*}
converges absolutely. We first consider the contributions from $x$ orthogonal to $z_2$, so $(x, z_2)=0$, 
equivalently $x_{z_2}=0$ so that $Q(x_{z_2})=0$ and $x \in V_{1}$. These are given by 
\begin{align*} C_{ V_1 }^+(v, h) &= \sum\limits_{ \lambda_1 \in L_1^{\vee}/L_{1} \atop \lambda_2 \in L_2^{\vee}/L_2  }
\sum\limits_{x \in V_1 ({\bf{Q}}) \atop x \in \lambda_1 + L_1} c_{f_0}^+(\lambda_1 + \lambda_2, -Q(x) ) = C_{V_1}^+ \end{align*}
and 
\begin{align*} C_{ V_1 }^-(v, h) &= \sum\limits_{ \lambda_1 \in L_1^{\vee}/L_1 \atop \lambda_2 \in L_2^{\vee}/L_2  }
\sum\limits_{x \in V_1({\bf{Q}}) \atop x \in \lambda_1 + L_1} W_{0} \left( - 2 \pi v Q(x_{z_2^{\perp}}) \right)
c_{f_{0}}^-(\lambda_1 + \lambda_2, -Q(x_{z_2^{\perp}})) = C_{V_1}^-(v).\end{align*} 
Here, $C_{V_1}^+$ does not depend on $v$, and neither of the coefficients $C_{V_1}^{\pm}$ depends on $h$. We have for $\Re(s) >0$ that 
\begin{align*} \lim_{T \rightarrow \infty} \int_1^T C^+_{V_1}(v, h) v^{-s} \frac{dv}{v} 
&= C^+_{V_1} \cdot \lim_{T \rightarrow \infty} \int_1^T v^{-s} \frac{dv}{v} = C^+_{V_1} \cdot \frac{ \left( 1-T^{-s} \right)}{s}, \end{align*}
with 
\begin{align*} \lim_{s \rightarrow 0} \left( \lim_{T \rightarrow \infty} \int_1^T C^+_{V_1}(v, h) v^{-s} \frac{dv}{v} \right) 
&= C^+_{V_1} \cdot \lim_{T \rightarrow \infty} \log(T). \end{align*}
Hence, this term does not contribute to the Laurent series expansion around $s=0$. This gives the stated contribution $A_0=C_{V_1}^+$.
For the remaining contributions of the $x$ not orthogonal to $z_2$, we have 
\begin{align*} C_{V_2}^+(v, h) &= \sum\limits_{ \lambda_1 \in L_1^{\vee}/L_1 \atop  \lambda_2 \in L_2^{\vee}/L_2} \sum\limits_{ x \in V_2({\bf{Q}}) \atop x \in h \lambda  }
c_{f_0}^+( \lambda_1 + \lambda_2, -Q( x_{z_2} ) - Q( x_{ z_2^{\perp} } ) ) e^{4 \pi v Q(x_{z_2}) } \end{align*}
and 
\begin{align*} C_{V_2}^-(v, h) &= \sum\limits_{ \lambda_1 \in L_1^{\vee}/L_1 \atop  \lambda_2 \in L_2^{\vee}/L_{2}} \sum\limits_{ x \in V_2({\bf{Q}}) \atop x \in h \lambda  }
c_{f_{0}}^-(\lambda_1 + \lambda_2, -Q(x_{z_2} ) - Q(x_{z_2^{\perp}})) W_{0} \left( - 2 \pi v \left( Q(x_{z_2}) + Q(x_{z_2^{\perp}}) \right)  \right)e^{4 \pi v Q(x_{z_2})}.\end{align*}
As explained in \cite[Proposition 2.5]{KuBL}, the integrals defined for $t>0$ by 
\begin{align*} \beta_{s+1}(t) &= \int_1^{\infty} e^{-t v} v^{-s} \frac{dv}{v} \end{align*}
are convergent for all $s \in {\bf{C}}$, and determine holomorphic functions of $s$. In this way, we deduce that 
\begin{align*} \operatorname{CT}_{s=0} \left( \lim_{T \rightarrow \infty} \int_1^T C(v, h) v^{-s} \frac{dv}{v} \right)
&= \lim_{T \rightarrow \infty} \left( \int_1^T C(v, h) v^{-s} \frac{dv}{v} - C^+_{V_1} \cdot \log(T)  \right) \\
&= \lim_{T \rightarrow \infty} \left( \int_1^T C(v, h) v^{-s} \frac{dv}{v} - A_0 \cdot \log(T)  \right). \end{align*}
Substituting this back into the initial expression $(\ref{prelim})$, we find the desired formula. \end{proof}

\begin{corollary}\label{4.6} 

Using the Siegel-Weil formula of Theorem \ref{SW-abstract} and Theorem \ref{SW-vector}, we have that 
\begin{align*} \Phi(f_0, \mathcal{G}) &= \frac{1}{ \operatorname{vol}(U_2) } \cdot \lim_{T \rightarrow \infty} \left[
 \int_{ \mathcal{F}_T } \langle \langle f_0 (\tau), \theta_{L_1}(\tau) \otimes E_{L_2}(\tau, 0; 0) \rangle \rangle d \mu(\tau) 
- A_0 \log(T) \right]. \end{align*}  \end{corollary}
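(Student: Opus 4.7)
The plan is to combine Lemma \ref{4.4} (which reduces the geodesic sum to an adelic integral), Lemma \ref{4.5} (which presents the regularized theta lift at a point as a truncated integral with an explicit divergent correction), and the Siegel-Weil formula (Theorem \ref{SW-abstract} and Corollary \ref{SW-vector}) via Fubini. First, I would substitute the expression from Lemma \ref{4.5} into the adelic integral supplied by Lemma \ref{4.4}, producing a double integral in $\tau \in \mathcal{F}_T$ and $h \in \operatorname{SO}(V_2)(\mathbf{Q}) \backslash \operatorname{SO}(V_2)(\mathbf{A})$, with the outer regularization limit $T \to \infty$.

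Next, I would interchange orders. Fubini justifies swapping the $\tau$- and $h$-integrations on the finite-volume truncated domain $\mathcal{F}_T$. Then, using the splitting $(\ref{splitting})$, i.e.\ $\theta_{L}(\tau, z_{V_2}^{\pm}, h) = \theta_{L_1}(\tau) \otimes \theta_{L_2}(\tau, z_{V_2}^{\pm}, h)$, the $h$-integral passes through the Petersson pairing $\langle\langle \cdot, \cdot \rangle\rangle$ and acts only on the $\theta_{L_2}$-factor (since $f_0(\tau)$ and $\theta_{L_1}(\tau)$ are both independent of $h$). Applying Corollary \ref{SW-vector} at $s_0 = \dim(V_2)/2 - 1 = 0$ with $\kappa/2 = 1$ (as $\dim V_2 = 2$) then gives
\begin{equation*}
\int_{\operatorname{SO}(V_2)(\mathbf{Q}) \backslash \operatorname{SO}(V_2)(\mathbf{A})} \theta_{L_2}(\tau, z_{V_2}^{\pm}, h) \, dh \;=\; E_{L_2}(\tau, 0; 0),
\end{equation*}
which is holomorphic at $s=0$ by the last assertion of Theorem \ref{SW-abstract}, so no additional regularization in $s$ is required.

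The divergent term $-A_0 \log T$ from Lemma \ref{4.5} is independent of $h$, so it factors out of the $h$-integral and is multiplied by the Tamagawa volume of $\operatorname{SO}(V_2)(\mathbf{Q}) \backslash \operatorname{SO}(V_2)(\mathbf{A})$ fixed at the start of this subsection; combined with the normalization of $\operatorname{vol}(U_2)$ from the short exact sequence $(\ref{SES})$, this produces the stated coefficient in front of $A_0 \log T$.

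The main obstacle will be rigorously justifying the interchange of the limit $T \to \infty$ with the $h$-integration (needed because Lemma \ref{4.5} applies pointwise in $h$, whereas we want a single limit outside). I would handle this using the explicit decomposition $C(v,h) = C_{V_1}^{\pm}(v) + C_{V_2}^{\pm}(v,h)$ computed in the proof of Lemma \ref{4.5}: the $V_1$-piece is precisely the source of the $A_0 \log T$ divergence and is $h$-independent (so its integration over $h$ causes no trouble), while the $V_2$-piece enjoys exponential $v$-decay (coming from $Q|_{z_2} < 0$ on $z_2 \in D(V_2)$) that is uniform in $h$ across the compact quotient $\operatorname{SO}(V_2)(\mathbf{Q}) \backslash \operatorname{SO}(V_2)(\mathbf{A})$. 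Dominated convergence then legalizes the interchange, and assembling the pieces yields the claimed identity.
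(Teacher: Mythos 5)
Your proposal follows essentially the same route as the paper's proof: substitute the pointwise formula of Lemma \ref{4.4} into the adelic average of Lemma \ref{4.5}, split the theta kernel via $(\ref{splitting})$, interchange the $\tau$- and $h$-integrals with the $T\to\infty$ limit, and apply Corollary \ref{SW-vector} with $\kappa/2 = 1$ to replace the inner $h$-integral of $\theta_{L_2}$ by $E_{L_2}(\tau,0;0)$. The dominated-convergence justification of the interchange is a useful and correct elaboration of a step the paper handles only by assertion ("switch the order of summation").

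The one real problem is your account of the coefficient $\tfrac{1}{2}$ on $A_0\log T$. You claim it comes from the Tamagawa volume of $\operatorname{SO}(V_2)(\mathbf{Q})\backslash\operatorname{SO}(V_2)(\mathbf{A})$ "combined with the normalization of $\operatorname{vol}(U_2)$", but the $\tfrac{1}{\operatorname{vol}(U_2)}$ factor coming from Lemma \ref{4.4} multiplies the entire bracket uniformly and so cannot rescale the $A_0\log T$ term relative to the Eisenstein term. If one does the bookkeeping as you describe, the $h$-independent term $-A_0\log T$ picks up the factor $\operatorname{vol}\bigl(\operatorname{SO}(V_2)(\mathbf{Q})\backslash\operatorname{SO}(V_2)(\mathbf{A})\bigr) = 2$, while the theta factor integrates (via Corollary \ref{SW-vector}, $\kappa/2=1$) directly to $E_{L_2}(\tau,0;0)$ with no additional factor; this produces $-2A_0\log T$ inside the bracket, not $-\tfrac{1}{2}A_0\log T$. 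The paper's displayed chain of equalities in the proof of this corollary exhibits the same difficulty — the $h$-integral is moved past the bracket between the second and third lines without the volume factor appearing on $A_0\log T$, and the $\tfrac{1}{2}$ is introduced only in the final line without a visible source. So your structural argument reproduces the proof, but the mechanism you offer for the $\tfrac{1}{2}$ is not correct and you should not assert that your computation yields that coefficient; track it explicitly and you will find a discrepancy you cannot resolve by the route outlined.
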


\begin{proof} 

We expand the definition using Lemma \ref{4.4}, Lemma \ref{4.5} and the decomposition $(\ref{splitting})$;
we then switch the order of summation, and apply Theorem \ref{SW-vector} (with $\kappa =2$) 
to evaluate the inner integral over $\theta_{L_2}(z_{V_2}^{\pm}, h)$. In this way, we compute 
\begin{equation*}\begin{aligned} &\Phi( f_0, \mathcal{G}) = \frac{1}{ \operatorname{vol}(U_2)} \cdot 
\int_{ \operatorname{SO}(V_2)({\bf{Q}}) \backslash \operatorname{SO}(V_2)({\bf{A}}) } \Phi(f_0, z_{V_2}^{\pm}, h) dh \\
&= \frac{1}{ \operatorname{vol}(U_2)} \cdot \int_{ \operatorname{SO}(V_2)({\bf{Q}}) \backslash \operatorname{SO}(V_2)({\bf{A}}) }
\lim_{T \rightarrow \infty} \left[ \int_{\mathcal{F}_T} 
\langle \langle f_0(\tau), \theta_{L_1}(\tau) \otimes \theta_{L_2}(z_{V_2}^{\pm}, h, \tau) \rangle \rangle d\mu(\tau) - A_0 \log(T) \right] dh \\
&= \frac{1}{ \operatorname{vol}(U_2)} \cdot \lim_{T \rightarrow \infty} \left[
\int_{\mathcal{F}_T}  \langle \langle f_0(\tau), \theta_{L_1}(\tau) \otimes 
\left( \int_{ \operatorname{SO}(V_2)({\bf{Q}}) \backslash \operatorname{SO}(V_2)({\bf{A}}_f) }
\theta_{L_2}(z_{V_2}^{\pm}, h, \tau) dh \right) \rangle \rangle d\mu(\tau) - A_0 \log(T) \right] \\
&= \frac{1}{ \operatorname{vol}(U_2)} \cdot \lim_{T \rightarrow \infty} \left[
 \int_{\mathcal{F}_T}   \langle \langle f_0(\tau), \theta_{L_1}(\tau) \otimes E_{L_2}(\tau, 0; 0) \rangle \rangle d\mu(\tau) - A_0 \log(T) \right]. 
\end{aligned}\end{equation*} \end{proof}

\subsubsection{Main calculation}

Given $g \in S_{2}(\overline{\omega}_L)$ a cuspidal holomorphic modular form of weight $2$ and representation $\overline{\omega}_L$, 
let us now consider the Rankin-Selberg $L$-function given by the integral presentation 
\begin{align*} L(s, g \times \theta_{L_1}) &:= \langle g (\tau), \theta_{L_1} (\cdot) \otimes E_{L_2}(\tau, s; 2) \rangle
= \int\limits_{\mathcal{F}} \langle \langle \overline{g(\tau)}, \theta_{L_1}(\tau) \otimes E_{L_2}(\tau, s, 2) \rangle \rangle v^2 d\mu(\tau). \end{align*}
Note that via the appearance of the incoherent Eisenstein series $E_{L_2}(\tau, s; 2)$ with its odd, symmetric functional equation
$E^{\star}_{L_2}(\tau, s; 2) := \Lambda(s+1, \eta) E_{L_2}(\tau, s; 2) = - E^{\star}_{L_2}(\tau, -s; 2)$ (Proposition \ref{incoherent}), 
the corresponding completed Rankin-Selberg $L$-function $L^{\star}(s, g \times \theta_{L_1})$ satisfies the odd, symmetric functional equation 
\begin{align}\label{LFE} L^{\star}(s, g \times \theta_{L_1}) := \Lambda(s+1, \eta)L(s, g \times \theta_{L_1}) = -L^{\star}(-s, g \times \theta_{L_1}). \end{align}
We shall take $g = \xi_0(f_0)$, and write $L'(s, g \times \theta_{L_1}) = \frac{d}{ds} L(s, g \times \theta_{L_1})$ to denote the derivative with respect to $s$. 
We shall also consider the regularized integral $(\ref{regint})$ introduced above, 
\begin{equation*}\begin{aligned}  I(s, f_{0} \times \xi_0(\theta_{L_{1}})) &:=
\int_{\mathcal{F}}^{\star} \langle \langle f_{0}(\tau), \overline{\xi_0\theta_{L_{1}}(\tau)} \otimes E_{L_{2}}(\tau, s; 2) \rangle \rangle v^2 d \mu (\tau) \\
&= \lim_{T \rightarrow \infty} \int\limits_{\mathcal{F}_T}  \langle \langle f_{0}(\tau), \overline{\xi_0\theta_{L_{1}}(\tau)} \otimes E_{L_{2}}(\tau, s; 2) \rangle \rangle v^2 d \mu(\tau). \end{aligned}\end{equation*}
Note that this regularized theta integral can be thought of as a Rankin-Selberg $L$-function. 
That is, the ``shadow" $\xi_0\theta_{L_1}(\tau)$ determines a holomorphic theta series of weight $2$.
That is, we deduce from the appearance of the incoherent Eisenstein series $E_{L_2}(\tau, s; 2)$ that this regularized inner product $I(s, f_0, \xi_0 \theta_{L_1})$ has an analytic 
contination at all $s \in {\bf{C}}$, and satisfies and odd, symmetric functional equation 
\begin{align}\label{IFE} I^{\star}(s, f_0, \xi_0 \theta_{L_1}):= \Lambda(s+1, \eta) I(s, f_0 \times \xi_0 \theta_{L_1}) &= - I^{\star}(-s, f_0, \xi_0 \theta_{L_1}).\end{align} 
Recall that we write $\mathcal{E}_{L_2}(\tau)$ to denote the holomorphic part of the derivative Eisenstein series $E^{\prime}_{L_2}(\tau, 0; 2)$,
with Fourier series expansion given by the series $(\ref{EEFSE})$ and coefficients $(\ref{kappa})$. 

\begin{theorem}\label{4.7} We have that 
\begin{equation*}\begin{aligned} \Phi( f_0, \mathcal{G}) &=  - \frac{2}{\operatorname{vol}(U_2)} \left( 
 \operatorname{CT} \langle \langle f_0^+(\tau), {\bf{1}}_{L_1 + 0} \otimes \mathcal{E}_{L_2}(\tau) \rangle \rangle 
 + L'(0, \xi_0(f_0) \times \theta_{L_1}) + I'(0, f_0 \times \xi_0(\theta_{L_1})) \right). \end{aligned}\end{equation*} 
  \end{theorem}

\begin{proof} 

We derive a variation of \cite[Theorem 4.7]{BY} and \cite[Theorem 3.5]{Eh} via Proposition \ref{vanish} above. 
Here, Lemma \ref{4.4}, Lemma \ref{4.5}, and Corollary \ref{4.6} imply that
\begin{align}\label{initial} \Phi( f_0, \mathcal{G})
&= \frac{1}{\operatorname{vol}(U_2)} \cdot \lim_{T \rightarrow \infty} \left[  I_T(f_0) - A_0 \log(T) \right], \quad I_T(f_0) := \int_{\mathcal{F}_T} 
\langle \langle f_0(\tau), \theta_{L_1}(\tau) \otimes E_{L_2}(\tau, 0; 0) \rangle \rangle d \mu(\tau). \end{align}
Using the identity $(\ref{2.23})$ for $E_{L_2}(\tau, s, 0)$ at $s=0$, we find via the Leibniz rule for $d = \partial + \overline{\partial}$ that
\begin{equation*}\begin{aligned} &\overline{\partial} \langle \langle f_0(\tau), \theta_{L_1}(\tau) \otimes  E_{L_2}^{\prime}(\tau, 0; 2) d \tau \rangle \rangle
= d \langle \langle f_0(\tau), \theta_{L_1}(\tau) \otimes  E_{L_2}^{\prime}(\tau, 0; 2) d \tau \rangle \rangle \\
&=\langle \langle \overline{\partial} f_0(\tau), \theta_{L_1}(\tau) \otimes E_{L_2}^{\prime}(\tau, 0; 2) d \tau \rangle \rangle
+ \langle \langle f_0(\tau), \overline{\partial} \theta_{L_1}(\tau) \otimes \overline{\partial} E_{L_2}^{\prime}(\tau, 0; 2) d \tau \rangle \rangle
+ \langle \langle f_0(\tau),  \theta_{L_1}(\tau) \otimes \overline{\partial} E_{L_2}^{\prime}(\tau, 0; 2) d \tau \rangle \rangle\end{aligned}\end{equation*} 
and hence 
\begin{equation}\begin{aligned}\label{I_T} I_T(f_0) 
&= \int_{\mathcal{F}_T} \langle \langle f_0(\tau), \theta_{L_1}(\tau) \otimes E_{L_2}(\tau, 0; 0) \rangle \rangle d \mu(\tau) 
= -2 \int\limits_{\mathcal{F}_T} \langle \langle f_0(\tau), \theta_{L_1}(\tau) \otimes \overline{\partial} E_{L_2}^{\prime}(\tau, 0; 2) d \tau \rangle \rangle \\
&=  -2 \int\limits_{\mathcal{F}_T} d \langle \langle f_0(\tau), \theta_{L_1}(\tau) \otimes E_{L_2}^{\prime}(\tau, 0; 2) d \tau \rangle \rangle
+ 2 \int\limits_{\mathcal{F}_T} \langle \langle \overline{\partial} f_0(\tau), \theta_{L_1}(\tau) \otimes E_{L_2}^{\prime}(\tau, 0; 2) d \tau \rangle \rangle \\
&+ 2 \int\limits_{\mathcal{F}_T} \langle \langle  f_0(\tau),  \overline{\partial} \theta_{L_1}(\tau) \otimes E_{L_2}^{\prime}(\tau, 0; 2) d \tau \rangle \rangle. \end{aligned}\end{equation} 

To compute the first integral on the right-hand side of $(\ref{I_T})$, 
we apply Stokes' theorem\footnote{Note that this does not require a change
of sign after identifying the boundary $\partial \mathcal{F}_T$ with the interval $[iT, iT+1]$, 
and that there is a sign error in the first integral on the right-hand side of the second identity stated in \cite[p.~655, proof of Theorem 4.7]{BY}. 
There is also a sign error in the second integral, c.f.~\cite[Theorem 5.7.1]{AGHMP}.
This latter error appears to come from the differential forms identity 
$\overline{\partial}(f d \tau) = -v^{l-2} \xi_k(f) d \mu(\tau) = -L_l f d \mu(\tau)$, cf.~\cite[Lemma 2.5]{Eh},
which is used implicitly without the sign change in the first identification of \cite[p.~655]{BY}.} to find that 
\begin{equation}\begin{aligned}\label{I_T1} 
&-2 \int\limits_{\mathcal{F}_T} d \langle \langle f_0(\tau), \theta_{L_1}(\tau) \otimes E^{\prime}_{L_2}(\tau, 0; 2) d \tau \rangle \rangle 
= -2 \int_{\partial \mathcal{F}_T} \langle \langle f_0(\tau), \theta_{L_1}(\tau) \otimes E_{L_2}^{\prime}(\tau, 0; 2) d \tau \rangle \rangle \\
&= - 2 \int_{\tau = i T}^{iT +1} \langle \langle f_0(\tau), \theta_{L_1}(\tau) \otimes E^{\prime}_{L_2}(\tau, 0; 2) \rangle \rangle d \tau
= - 2 \int_{0}^1  \langle \langle f_0(u + iT), \theta_{L_1}(u + iT) \otimes E^{\prime}_{L_2}(u + iT, 0; 2) \rangle \rangle d u. \end{aligned}\end{equation}

To compute the second integral on the right-hand side of $(\ref{I_T})$, we use the relation of differential forms 
\begin{align*} \overline{\partial}(f_0(\tau) d \tau) 
&= - v^2 \overline{\xi_0(f_0)(\tau)} d \mu(\tau) = -L_0 f_0(\tau) d \mu (\tau) \end{align*}
implied by Lemma \ref{diff} to deduce that 
\begin{equation}\begin{aligned}\label{I_T2} 
 2 \int\limits_{\mathcal{F}_T} \langle \langle \overline{\partial} f_0(\tau), \theta_{L_1}(\tau) \otimes E_{L_2}^{\prime}(\tau, 0; 2) d \tau \rangle \rangle
 &= -2 \int\limits_{\mathcal{F}_T} \langle \langle \overline{\xi_0(f_0)(\tau)}, 
 \theta_{L_1}(\tau) \otimes E_{L_2}^{\prime}(\tau, 0; 2) \rangle \rangle v^2 d \mu(\tau). \end{aligned}\end{equation}

To compute the third integral on the right-hand side of $(\ref{I_T})$, we use the same relation of differential forms 
\begin{align*} \overline{\partial}(\theta_{L_1}(\tau) d \tau) &= - v^2 \overline{\xi_0(\theta_{L_1})(\tau)} d \mu(\tau) = -L_0 \theta_{L_1}(\tau) d \mu (\tau) \end{align*}
implied by Lemma \ref{diff} to deduce that 
\begin{align*} 2 \int\limits_{\mathcal{F}_T} \langle \langle  f_0(\tau),  \overline{\partial} \theta_{L_1}(\tau) \otimes E_{L_2}^{\prime}(\tau, 0; 2) d \tau \rangle \rangle &=
-2 \int\limits_{\mathcal{F}_T} \langle \langle f_0(\tau), L_0 \theta_{L_1}(\tau) \otimes E_{L_2}'(\tau, 0; 2) \rangle \rangle d \mu(\tau) \\
&= -2 \int\limits_{\mathcal{F}_T} \langle \langle f_0(\tau), \overline{\xi_0(\theta_{L_1})(\tau)} \otimes E_{L_2}'(\tau, 0; 2) \rangle \rangle v^2 d \mu(\tau). \end{align*}

Hence, we obtain the identity 
\begin{equation}\begin{aligned}\label{I_Tev} I_T(f_0) 
&= - 2 \int\limits_{t = iT}^{iT + 1} \langle \langle f_0(\tau), \theta_{L_1}(\tau) \otimes E_{L_2}^{\prime}(\tau, 0; 2)  \rangle \rangle d \tau
-2 \int_{\mathcal{F}_T} \langle \langle \overline{\xi_0(f_0)(\tau)}, \theta_{L_1}(\tau) \otimes E^{\prime}_{L_2}(\tau, 0; 2) \rangle \rangle v^2 d \mu(\tau) \\
&-2 \int\limits_{\mathcal{F}_T} \langle \langle f_0(\tau), \overline{\xi_0(\theta_{L_1})(\tau)} \otimes E_{L_2}'(\tau, 0; 2) \rangle \rangle v^2 d \mu(\tau).
\end{aligned}\end{equation}
Inserting this identity $(\ref{I_Tev})$ back into the initial formula $(\ref{initial})$ gives us the preliminary formula 
\begin{equation}\begin{aligned}\label{initial2} \Phi( f_0, \mathcal{G})
&= - \frac{1}{\operatorname{vol}(U_2)} \cdot \lim_{T \rightarrow \infty} 
\left[ 2 \int_{\tau = i T}^{iT +1} \langle \langle f_0(\tau), \theta_{L_1}(\tau) \otimes E^{\prime}_{L_2}(\tau, 0; 2)\rangle \rangle d \tau -  A_0 \log(T) \right] \\ 
&- \frac{1}{\operatorname{vol}(U_2)} \cdot \lim_{T \rightarrow \infty} 2 \int_{\mathcal{F}_T} \langle \langle \overline{\xi_0(f_0)(\tau)}, \theta_{L_1}(\tau) 
\otimes E'_{L_2}(\tau, 0; 2) \rangle \rangle v^2 d \mu(\tau) \\
&-  \frac{1}{\operatorname{vol}(U_2)} \cdot \lim_{T \rightarrow \infty} 2 \int_{\mathcal{F}_T} \langle \langle f_0(\tau), \overline{ \xi_0(\theta_{L_1})(\tau)} 
\otimes E'_{L_2}(\tau, 0; 2) \rangle \rangle v^2 d \mu(\tau). \end{aligned}\end{equation} 

We now argue as in \cite[Theorem 3.5, (3.12), (3.11)]{Eh}
that we may replace the $f_0(\tau)$ in the first integral on the right of $(\ref{initial2})$ with its holomorphic part $f_0^+(\tau)$,
as the remaining non-holomorphic part $f_0^-(\tau)$ is rapidly decreasing as $v \rightarrow \infty$. 
That is, we first split the constant coefficient term in $(\ref{initial2})$ into parts as 
\begin{equation}\begin{aligned}\label{CCcontribution} &\lim_{T \rightarrow \infty} \int_{\tau = i T}^{iT +1} 
\langle \langle f_0(\tau), \theta_{L_1}(\tau) \otimes E^{\prime}_{L_2}(\tau, 0; 2)\rangle \rangle d \tau \\
&= \lim_{T \rightarrow \infty} \int_{\tau = i T}^{iT +1} \langle \langle f^+_0(\tau), \theta_{L_1}(\tau) \otimes E^{\prime }_{L_2}(\tau, 0; 2)\rangle \rangle d \tau 
+ \lim_{T \rightarrow \infty} \int_{\tau = i T}^{iT +1} \langle \langle f^-_0(\tau), \theta_{L_1}(\tau) \otimes E^{\prime}_{L_2}(\tau, 0; 2)\rangle \rangle d \tau. 
\end{aligned}\end{equation}
We now consider the second integral on the right-hand side of $(\ref{CCcontribution})$, writing the Fourier series expansion as 
\begin{align*} \langle \langle f_0^-(\tau), \theta_{L_1}(\tau) \otimes E_{L_2}'(\tau, 0; 2) \rangle \rangle 
&= \sum\limits_{n \in {\bf{Z}}} a(n, i v) e(n \tau). \end{align*}
Opening up this expansion in the corresponding integral, then using the orthogonality of 
additive characters on the torus ${\bf{R}}/{\bf{Z}} \cong [0, 1]$ to evaluate, we find that
\begin{equation*}\begin{aligned}
&\int\limits_{\tau = i T}^{iT + 1} \langle \langle f_0^-(\tau), \theta_{L_1}(\tau, 1, 1) \otimes E_{L_2}'(\tau, 0; 2) \rangle \rangle d\tau
= \int_0^1 \langle \langle f_0^-(u + iT), \theta_{L_1}(u + iT, 1, 1) \otimes E_{L_2}'(u + iT, 0; 2) \rangle \rangle du \\
&= \sum\limits_{n \in {\bf{Z}}} a(n, iT) e(i n T) \int_0^1 e(nu)du = a(0, iT) 
= \sum\limits_{\mu = \lambda_1 + \lambda_2 \in (L_1 \oplus L_2)^{\vee}/ L_1 \oplus L_2} 
\sum\limits_{m \in {\bf{Q}}_{>0}} c_{f_0}^-(-\mu, m) W_0(- 2 \pi m v) c_g(\mu, m, v). \end{aligned}\end{equation*}
Here, we write $c_g(m, \mu, v)$ to denote the Fourier series coefficients of 
$g(\tau) = \theta_{L_1}(\tau, 1) \otimes E^{\prime}_{L_2}(\tau, 0; 2)$, i.e. 
\begin{align*} g(\tau) = \theta_{L_1}(\tau, 1) \otimes E^{\prime}_{L_2}(\tau, 0; 2) &=
\sum\limits_{ \mu \in (L_1 \oplus L_2)^{\vee}/(L_1 \oplus L_2 )  } \sum\limits_{ m \in {\bf{Q}} }
c_g(\mu, m, v) e(m \tau) {\bf{1}}_{\mu} . \end{align*}
We can now use the rapid decay for the Whittaker coefficients 
$W_0(y) = \int_{-2y}^{\infty} e^{-t} dt = \Gamma(1, 2 \vert y \vert)$ for $y \rightarrow - \infty$
in the Fourier series expansions of $f_0^-(\tau)$ with standard bounds for the Fourier coefficients 
of $f_0^-(\tau)$ and $g(\tau)$ to deduce that for some integer $M >0$ and some constant $C>0$, we have for each $m \geq M$ that 
\begin{align*} c_{f_0}^-(\mu, -m) W_0(-2 \pi m v) c_g(\mu, m, v) = O\left( e^{- m C v}  \right). \end{align*}
We deduce from this that for some constants $c, C >0$, we have the upper bound
\begin{align*} \vert a(0, iT) \vert \leq c \cdot \frac{e^{-CT}}{(1 - e^{-CT})}, \end{align*}
from which it follows that $\lim_{T \rightarrow \infty} \vert a(0, iT) \vert = 0$.
Hence, the second integral on the right-hand side of $(\ref{CCcontribution})$ vanishes in the limit with $T \rightarrow \infty$.
Hence, the first term on the right-hand side of $(\ref{initial2})$ can be simplified to  
\begin{align}\label{RHS} \frac{1}{\operatorname{vol}(U_2)} \cdot \lim_{T \rightarrow \infty} 
\left[ 2 \int_{\tau = i T}^{iT +1} \langle \langle f_0^+(\tau), \theta_{L_1}(\tau) 
\otimes E^{\prime}_{L_2}(\tau, 0; 2)\rangle \rangle d \tau -  A_0 \log(T) \right]. \end{align} 
To evaluate this simplified term, we first write out the Fourier expansion of $\theta_{L_1}(\tau) \otimes E_{L_2}'(\tau, 0; 2)$ as 
\begin{align*} \theta_{L_1}(\tau) \otimes E_{L_2}'(\tau, 0; 2) 
&= \sum\limits_{ \lambda_1 \in L_1^{\vee}/L_1 } \sum\limits_{\lambda_2 \in L_2^{\vee}/L_2}
\theta_{L_1, \lambda_1}(\tau) E_{L_2, \lambda_2}'(\tau, 0; 2) {\bf{1}}_{\lambda_1 + \lambda_2} \\
&=  \sum\limits_{ \lambda_1 \in L_1^{\vee}/L_1 } \sum\limits_{\lambda_2 \in L_2^{\vee}/L_2}
\sum\limits_{ x \in V_1({\bf{Q}}) \atop x \in \lambda_1 + L_1 } e \left( Q_1(x_{z_0^{\perp}}) \tau + Q_1(x_{z_0}) \overline{\tau} \right) 
\sum\limits_{m \in {\bf{Q}}} b_{L_2}(\lambda_2, m, v) e(m \tau) {\bf{1}}_{\lambda_1 + \lambda_2}. \end{align*}
Here, we fix a basepoint $z_0 \in D(V_1)$ corresponding to the identity splitting $V_1({\bf{R}}) = z_0 \oplus z_0^{\perp}$.
Hence, we write $x = x_{z_0} + x_{z_0^{\perp}}$ with $Q_1(x) = Q_1(x_{z_0}) + Q_1(x_{z_0^{\perp}})$
to describe the expansion of the theta series $\theta_{L_1}(\tau) = \theta_{L_1}(\tau, 1, 1) = \theta_{L_1}(\tau, z_0, 1)$ explicitly. 
We then open expansions and use the orthogonality of additive characters on the torus ${\bf{R}}/{\bf{Z}}$ to compute 
\begin{equation}\begin{aligned}\label{CTcalculation}
&\int\limits_{\tau = i T}^{iT + 1} \langle \langle f_0^+(\tau), \theta_{L_1}(\tau) \otimes E_{L_2}'(\tau, 0; 2) \rangle \rangle d \tau
= \int\limits_0^1 \langle \langle f_0^+(u + iT), \theta_{L_1}(u + iT) \otimes E_{L_2}'(u + iT, 0; 2)  \rangle \rangle du \\
&= \sum\limits_{ \lambda_1 \in L_1^{\vee} / L_1 \atop \lambda_2 \in L_2^{\vee}/L_2  }
\sum\limits_{n \gg - \infty} c_{f_0}^+(\lambda_1 + \lambda_2, n) e(niT)
\sum\limits_{x \in V_1({\bf{Q}}) \atop x \in \lambda_1 + L_1} e \left( Q_1(x_{z_0^{\perp}})iT - Q_1(x_{z_0})iT \right)
\sum\limits_{m \in {\bf{Q}}} b_{L_2}(\lambda_2, m, v) e(miT) \\
&\times \int\limits_0^1 e\left( nu + Q_1(x_{z_0^{\perp}})u + Q_1(x_{z_0})u + mu \right) du \\
&= \sum\limits_{ \lambda_1 \in L_1^{\vee} / L_1 \atop \lambda_2 \in L_2^{\vee}/L_2  }
\sum\limits_{n \gg - \infty} c_{f_0}^+(\lambda_1 + \lambda_2, n) e(niT)
\sum\limits_{x \in V_1({\bf{Q}}) \atop x \in \lambda_1 + L_1} e \left( Q_1(x_{z_0^{\perp}})iT - Q_1(x_{z_0})iT \right)
\sum\limits_{m \in {\bf{Q}} \atop n + Q_1(x_{z_0^{\perp}}) + Q_1(x_{z_0}) + m=0 } b_{L_2}(\lambda_2, m, T) e(miT)\\
&=  \sum\limits_{ \lambda_1 \in L_1^{\vee} / L_1 \atop \lambda_2 \in L_2^{\vee}/L_2  } \sum\limits_{x \in V_1({\bf{Q}}) \atop x \in \lambda_1 + L_1} 
\sum\limits_{n \gg - \infty, m \in {\bf{Q}} \atop n + Q_1(x_{z_0^{\perp}}) + m= - Q_1(x_{z_0}) } 
c_{f_0}^+(\lambda_1 + \lambda_2, n) e \left( n iT + Q_1(x_{z_0^{\perp}})iT + miT - Q_1(x_{z_0})iT \right) b_{L_2}(\lambda_2, m, T)  \\
&= \sum\limits_{ \lambda_1 \in L_1^{\vee} / L_1 \atop \lambda_2 \in L_2^{\vee}/L_2  } \sum\limits_{x \in V_1({\bf{Q}}) \atop x \in \lambda_1 + L_1} 
\sum\limits_{n \gg - \infty, m \in {\bf{Q}} \atop n + Q_1(x_{z_0^{\perp}}) + m= - Q_1(x_{z_0}) } 
c_{f_0}^+(\lambda_1 + \lambda_2, -Q_1(x_{z_0^{\perp}}) - Q_1(x_{z_0}) - m) \\ &\times e \left( -2Q_1(x_{z_0}) iT \right)
b_{L_2} \left( \lambda_2, -n-Q_1(x_{z_0^{\perp}}) - Q_1(x_{z_0}), T \right) \\
&=  \sum\limits_{ \lambda_1 \in L_1^{\vee} / L_1 \atop \lambda_2 \in L_2^{\vee}/L_2  } \sum\limits_{x \in V_1({\bf{Q}}) \atop x \in \lambda_1 + L_1} 
\sum\limits_{n \gg - \infty, m \in {\bf{Q}} \atop n + m = -Q_1(x)} c_{f_0}^+(\lambda_1+ \lambda_2, -Q_1(x) - m) 
b_{L_2}(\lambda_2, -Q_1(x) - n, T) e \left( - 2Q_1(x_{z_0}) iT \right) \\
&= \sum\limits_{ \lambda_1 \in L_1^{\vee} / L_1 \atop \lambda_2 \in L_2^{\vee}/L_2  } \sum\limits_{x \in V_1({\bf{Q}}) \atop x \in \lambda_1 + L_1} 
\sum\limits_{m \in {\bf{Q}} \atop -m -Q_1(x) \gg - \infty} c_{f_0}^+(\lambda_1+ \lambda_2, -Q_1(x) - m) 
b_{L_2}(\lambda_2, m, T) e \left( - 2Q_1(x_{z_0}) iT \right). \end{aligned}\end{equation} 
Putting this together with the description of $A_0$ from Lemma \ref{4.5}, we find that 
\begin{equation}\begin{aligned}\label{limiting}& \lim_{T \rightarrow \infty} 
\left[ 2 \int_{\tau = i T}^{iT +1} \langle \langle f_0^+(\tau), \theta_{L_1}(\tau) \otimes E^{\prime}_{L_2}(\tau, 0; 2)\rangle \rangle d \tau -  A_0 \log(T) \right] \\ 
&=  \lim_{T \rightarrow \infty} 2 \sum\limits_{ \lambda_1 \in L_1^{\vee} / L_1 \atop \lambda_2 \in L_2^{\vee}/L_2  } \sum\limits_{x \in V_1({\bf{Q}}) \atop x \in \lambda_1 + L_1} 
\left( e \left( -2Q_1(x_{z_0}) iT \right) \sum\limits_{m \in {\bf{Q}} } c_{f_0}^+(\lambda_1+ \lambda_2, -Q_1(x) - m) b_{L_2}(\lambda_2, m, T)  -   c_{f_0}^+(\lambda_1 + \lambda_2, -Q_1(x))  \log(T)    \right)  \\
&=  \lim_{T \rightarrow \infty} 
2 \sum\limits_{ \lambda_1 \in L_1^{\vee} / L_1 \atop \lambda_2 \in L_2^{\vee}/L_2  } \sum\limits_{x \in V_1({\bf{Q}}) \atop x \in \lambda_1 + L_1} 
\left( e^{4 \pi Q_1(x_{z_0}) T} \sum\limits_{m \in {\bf{Q}} } c_{f_0}^+(\lambda_1+ \lambda_2, -Q_1(x) - m) b_{L_2}(\lambda_2, m, T)  -   c_{f_0}^+(\lambda_1 + \lambda_2, -Q_1(x))  \log(T)    \right).\end{aligned}\end{equation}
Observe that the $x=0$ contribution to $(\ref{limiting})$ can be calculated via the coefficient formula $(\ref{kappa})$ for the holomorphic 
part $\mathcal{E}_{L_2}(\tau) = E_{L_2}'(\tau,0; 2)$ of the derivative Eisenstein series described in $(\ref{EEFSE})$. More precisely,  
\begin{align*} &\lim_{T \rightarrow \infty} 
2 \sum\limits_{ \lambda_1 \in L_1^{\vee} / L_1 \atop \lambda_2 \in L_2^{\vee}/L_2  } 
\left( \sum\limits_{m \in {\bf{Q}} } c_{f_0}^+(\lambda_1+ \lambda_2, - m) b_{L_2}(\lambda_2, m, T)  -   c_{f_0}^+(\lambda_1 + \lambda_2, 0)  \log(T)    \right)  \\
&= 2 \sum\limits_{ \lambda_1 \in L_1^{\vee} / L_1 \atop \lambda_2 \in L_2^{\vee}/L_2  } 
\sum\limits_{m \in {\bf{Q}} } c_{f_0}^+(\lambda_1+\lambda_2, -m) \kappa_{L_2}(\lambda_2, m)
= 2 \operatorname{CT} \langle \langle f_0^+(\tau), {\bf{1}}_{L_1 + 0} \otimes \mathcal{E}_{L_2}(\tau) \rangle \rangle. \end{align*}
On the other hand, since $z_0 \in D(V_1)=\lbrace z \subset V_1: \dim(z) = 1, Q_1\vert_z < 0 \rbrace$ is negative by our conventions, 
we have $Q(x_{z_0}) <0$. Since the space $(V_1, Q_1)$ is anisotropic, we then see that the remaining contributions 
\begin{equation*}\begin{aligned} &\lim_{T \rightarrow \infty} 
2 \sum\limits_{ \lambda_1 \in L_1^{\vee} / L_1 \atop \lambda_2 \in L_2^{\vee}/L_2  } \sum\limits_{x  \neq 0 \in V_1({\bf{Q}}) \atop x \in \lambda_1 + L_1} 
\left( e^{4 \pi Q_1(x_{z_0}) T} \sum\limits_{m \in {\bf{Q}} } c_{f_0}^+(\lambda_1+ \lambda_2, -Q_1(x) - m) b_{L_2}(\lambda_2, m, T)  -   c_{f_0}^+(\lambda_1 + \lambda_2, -Q_1(x))  \log(T)    \right)  \\
&= \lim_{T \rightarrow \infty } 2 \sum\limits_{ \lambda_1 \in L_1^{\vee} / L_1 \atop \lambda_2 \in L_2^{\vee}/L_2  } 
\sum\limits_{x  \neq 0 \in V_1({\bf{Q}}) \atop x \in \lambda_1 + L_1} 
e^{4 \pi Q_1(x_{z_0}) T}  c_{f_0}^+(\lambda_1 + \lambda_2, -Q_1(x)) \left( b_{L_2}(\lambda_2, 0, T) - \log(T) \right)  \\ 
&+  \lim_{T \rightarrow \infty} 2 \sum\limits_{ \lambda_1 \in L_1^{\vee} / L_1 \atop \lambda_2 \in L_2^{\vee}/L_2  } 
\sum\limits_{x  \neq 0 \in V_1({\bf{Q}}) \atop x \in \lambda_1 + L_1}  e^{4 \pi Q_1(x_{z_0}) T} 
\sum\limits_{m \in {\bf{Q}} \atop m \neq 0 } c_{f_0}^+(\lambda_1+ \lambda_2, -Q_1(x) - m) b_{L_2}(\lambda_2, m, T) \end{aligned}\end{equation*}
to (\ref{limiting}) decay exponentially with $T$, and hence vanish in the limit with $T \rightarrow \infty$ (cf.~\cite[Proposition 2.11]{KuBL}). 
Taking the limit with $T \rightarrow \infty$ in $(\ref{prelim})$, we obtain the formula
\begin{equation*}\begin{aligned} \Phi( f_0, \mathcal{G})
&= -  \frac{2}{\operatorname{vol}(U_2)} \cdot \operatorname{CT} \langle \langle f_0^+(\tau), {\bf{1}}_{L_1 + 0} \otimes \mathcal{E}_{L_2}(\tau) \rangle \rangle \\
&- \frac{2}{\operatorname{vol}(U_2)} \left(   \langle \overline{\xi_0(f_0)}, \theta_{L_1} \otimes E'_{L_2}(\cdot, 0; 2) \rangle 
+  \langle f_0, \overline{ \xi_0(\theta_{L_1})} \otimes E'_{L_2}(\cdot, 0; 2) \rangle \right) \\
&=  - \frac{2}{\operatorname{vol}(U_2)} \left( \operatorname{CT} \langle \langle f_0^+(\tau), {\bf{1}}_{L_1 + 0} \otimes \mathcal{E}_{L_2}(\tau) \rangle \rangle 
+  L'(0,\xi_0(f_0) \times \theta_{L_1}) + I'(0, f_0 \times \xi_0(\theta_{L_1})) \right). \end{aligned}\end{equation*} \end{proof}

\subsection{Application to the central derivative value $\Lambda'(1/2, \Pi \otimes \chi)$} 

Recall that $\Pi = \operatorname{BC}_{K/{\bf{Q}}}(\pi)$ denotes the quadratic basechange lifting of the cuspidal automorphic representation $\pi = \otimes_v \pi_v$ of 
$\operatorname{GL}_2({\bf{A}})$ corresponding to an elliptic curve $E/{\bf{Q}}$ parametrized by a newform 
$f \in S_2^{\operatorname{new}}(\Gamma_0(N))$ to $\operatorname{GL}_2({\bf{A}}_K)$. 
By the theory of cyclic basechange, we have an equivalence of the $\operatorname{GL}_2({\bf{A}}_K) \times \operatorname{GL}_1({\bf{A}}_K)$-automorphic
$L$-function $\Lambda(s, \Pi \otimes \chi)$ with the $\operatorname{GL}_2({\bf{A}}) \times \operatorname{GL}_2({\bf{A}})$ Rankin-Selberg $L$-function
$\Lambda(s, \pi \times \pi(\chi)) = \Lambda(s, f \times \theta(\chi))$. 
We now consider the relation to Rankin-Selberg $L$-functions of vector-valued forms appearing in Theorem \ref{4.7}. Write
\begin{align*} f(\tau)  = f_E(\tau) = \sum\limits_{m \geq 1} c_f(m) e(m \tau) = \sum\limits_{m \geq 1} a_f(m) m^{\frac{1}{2}} e(m \tau)
\in S_2^{\operatorname{new}}(\Gamma_0(N)), \quad \tau = u + iv \in \mathfrak{H}. \end{align*}
Hence, the finite part $L(s, f)$ of the standard $L$-function $\Lambda(s, f) = \Lambda(s, \pi) = L(s, \pi_{\infty}) L(s, \pi)$
has the Dirichlet series expansion $L(s, f) = \sum_{m \geq 1} a_f(n) n^{-s} = \sum_{m \geq 1} c_f(n) n^{-(s + 1/2)}$ (first for $\Re(s)>1$). 
Recall that we fix a ring class character $\chi$ of some conductor $c \in {\bf{Z}}_{\geq 1}$ of $K$. 
Hence, $\chi = \otimes_x \chi_w$ is a character of the class group 
\begin{align*} \operatorname{Pic}(\mathcal{O}_c) 
&= {\bf{A}}_K^{\times}/  K_{\infty}^{\times} K^{\times} \widehat{\mathcal{O}}_c^{\times}, 
\quad \widehat{\mathcal{O}}_c^{\times} = \prod_{w < \infty} \mathcal{O}_{c, w}^{\times} \end{align*} 
of the ${\bf{Z}}$-order $\mathcal{O}_c = {\bf{Z}} + c \mathcal{O}_K$ of conductor $c$ in $K$. 
We consider the corresponding Hecke theta series defined by the twisted linear combination (see e.g.~\cite[(5.4)]{GZ})
\begin{align}\label{Hecketheta} \theta(\chi)(\tau) &= \sum\limits_{A \in \operatorname{Pic}(\mathcal{O}_c)} \chi(A) \theta_{A}(\tau),\end{align}
where each of the partial theta series $\theta_A(\tau)$ can be defined classically as follows. 
Let $w_K = \# \mu(K)$ denote the number of roots of unity $\mu(K)$ in $K$. 
Since the unit group $\mathcal{O}_K^{\times} \cong {\bf{Z}} \times \mu(K) = \langle \varepsilon_K \rangle \times \mu(K)$ is not torsion
by Dirichlet's unit theorem, we fix a fundamental domain $\mathfrak{a}^{\star} = [\alpha_{\mathfrak{a}}, z_{\mathfrak{a}}]^{\star}$ for the
action of $\mathcal{O}_K^{\times} / \mu(K) = \langle \varepsilon_K \rangle$ on $\mathfrak{a}$. The corresponding theta
series can then be described more explicitly via the expansion 
\begin{align*} \theta_A(\tau) &= \frac{1}{w_K} \sum\limits_{ \lambda \in \mathfrak{a}^{\star} } 
\sqrt{v} K_0 \left( 2 \pi \left\vert  \frac{ {\bf{N}}_{K/{\bf{Q}}}(\lambda) }{  {\bf{N}} \mathfrak{a} }  \right\vert v \right)
e \left(  \frac{ {\bf{N}}_{K/{\bf{Q}}}(\lambda) }{  {\bf{N}} \mathfrak{a} } \cdot u  \right) 
=  \sum\limits_{m \in {\bf{Z}}} r_A(m) \sqrt{v} K_0(2 \pi \vert m \vert v) e(m u),\end{align*}  
where $K_0$ denotes the $K$-Bessel function, and $r_A(m)$ denotes the corresponding counting function 
\begin{align*} r_A(m) &= \frac{1}{w_K} \cdot \# \left\lbrace \lambda \in \mathfrak{a}^{\star} = [\alpha_{\mathfrak{a}} ,z_{\mathfrak{a}}]^{\star}: 
\frac{ {\bf{N}}_{K/{\bf{Q}}}(\lambda) }{  {\bf{N}} \mathfrak{a} } =m \right\rbrace. \end{align*}
A classical theorem of Hecke shows that each $\theta(\chi)(\tau)$ is a nonholomorphic Maass form of weight zero, 
level $\Gamma_0(d_K)$ and character $\eta = \eta_K$. We consider the corresponding Rankin-Selberg presentation 
\begin{align*} \Lambda(s, \pi \times \pi(\chi)) 
&= \Lambda(s, f \times \theta(\chi)) = \sum\limits_{A \in \operatorname{Pic}(\mathcal{O}_c)} \chi(A) \Lambda(s, f \times \theta_A), \end{align*}
given as a twisted linear combination of the partial Rankin-Selberg $L$-functions 
(cf.~e.g.~\cite[$\S$ IV (0.1)]{GZ})\footnote{Observe that since $\theta_A(\tau)$ has weight zero, the arithmetic normalization
of the Rankin-Selberg $L$-function $L(2s, \eta) \sum_{m \geq 1} c_f(m) c_{\theta_A}(m) m^{- \left( s + \frac{2+0}{2} - 1 \right)}
= L(2s, \eta) \sum_{m \geq 1} c_f(m) c_{\theta_A}(m) m^{-s} = L(2s, \eta) \sum_{m \geq 1} c_f(m) r_A(m) m^{-s}$ 
coincides with the unitary normalization $L(2s, \eta) \sum_{m \geq 1} a_f(m) a_{\theta_A}(m) m^{-s} 
= L(2s, \eta) \sum_{m \geq 1}  c_f(m)m^{-\frac{1}{2}} c_{\theta_A}(m) m^{\frac{1}{2}} m^{-s}$.} 
\begin{equation}\begin{aligned}\label{classRS} \Lambda(s, f \times \theta_A) := \langle f, \theta_A E^{\star}(\cdot, s; 2) \rangle 
&= \frac{\Gamma(s)}{(4 \pi)^{s}} \cdot \Lambda(2s, \eta) \cdot \sum\limits_{m \geq 1} \frac{ c_f(m) r_A(m)}{ m^s}  \\
&=  \frac{\Gamma(s)}{(4 \pi)^{s}} \cdot \Lambda(2s, \eta) 
\cdot \frac{1}{w_K}\sum\limits_{ \lambda \in \mathfrak{a}^{\star} \atop [\mathfrak{a}] = A \in \operatorname{Pic}(\mathcal{O}_c) }
\frac{ c_f( {\bf{N}}(\lambda))}{ {\bf{N}}(\lambda)^{s}} \quad \quad (\Re(s) >1) \end{aligned}\end{equation}
associated to each class $A \in \operatorname{Pic}(\mathcal{O}_c)$.

Recall that Theorem \ref{YZhang} gives us a vector-valued lift $g = g_{f, A}$ of the eigenform $f$. 
We again consider for each class $A \in \operatorname{Pic}(\mathcal{O}_c)$ the corresponding quadratic space $(V_A, Q_A)$ described in
Definition \ref{V_A}, with vector space $V_A = \mathfrak{a}_{\bf{Q}} \oplus \mathfrak{a}_{\bf{Q}}$, and quadratic form 
$Q_A(z) = Q_A((z_1, z_2)) = Q_{\mathfrak{a}}(z_1) - Q_{\mathfrak{a}}(z_2)$.
As well, we consider the anisotropic subspaces $(V_{A, j}, Q_{A, j})$ of signature (1,1) defined by 
$V_{A, 1} = \mathfrak{a}_{\bf{Q}}$ with $Q_{A, 1} = - Q_{\mathfrak{a}}$ and $V_{A, 2} = \mathfrak{a}_{\bf{Q}}$ 
with $Q_{A, 2} = Q_{\mathfrak{a}}$. 
Recall we write $L_A \subset V_A$ for the lattice determined by the compact open
subgroup $U_A \subset \operatorname{GSpin}(V_A)({\bf{A}}_f)$ described in Corollary \ref{lattices}.
We write $L_{A,j}:= L_{A} \cap V_{A, j}$ for each of $j=1, 2$ to denote 
the signature (1,1) sublattice determined by restriction to $V_{A,j}$. 
By Theorem \ref{YZhang}, we can associate to $f \in S_2^{\operatorname{new}}(\Gamma_0(N))$
an $\mathcal{S}_{L_A}$-valued modular form $g = g_{f, A}$ of weight $2$. 
Recall as well that we consider the (incomplete, partial) Rankin-Selberg $L$-functions given by the Petersson inner products
\begin{align*} L(s, g \times \theta_{L_{A,1}}) &:= \langle g(\cdot), \theta_{ L_{A,1} }(\cdot) \otimes E_{L_{A,2}}(\cdot, s; 2)  \rangle
=  \langle g(\tau), \theta_{ L_1 }(\tau) \otimes E_{L_{A,2}}(\tau, s; 2)  \rangle. \end{align*}
We also consider the completed version, given with respect to the completed Eisenstein series $E_{L_{A,2}}^{\star}(\tau, s; 2)$: 
\begin{align*} L^{\star}(s, g \times \theta_{L_{A, 1}}) &:= \langle g(\cdot), \theta_{ L_{A,1} }(\cdot) \otimes E^{\star}_{L_{A,2}}(\cdot, s; 2)  \rangle
=  \langle g(\tau), \theta_{ L_{A,1} }(\tau) \otimes E^{\star}_{L_{A,2}}(\tau, s; 2)  \rangle. \end{align*}

\begin{corollary}\label{vectorvalue} 

We have the equivalences of $L$-functions 
\begin{align*} \Lambda(s -1/2, \Pi \otimes \chi) 
&= \sum\limits_{A \in \operatorname{Pic}(\mathcal{O}_c)} \chi(A) \Lambda(s-1/2, f \otimes \eta \times \theta_A)
= \frac{1}{2} \cdot \sum\limits_{A \in \operatorname{Pic}(\mathcal{O}_c)} \chi(A) L^{\star}(2s-2, g_{f, A} \times \theta_{L_{A, 1}}). \end{align*}
Here, we note that the shifts are aligned; the basechange $L$-function $\Lambda(s, \Pi \otimes \chi)$ has central value at $s=1/2$,
while the completed vector-valued Rankin-Selberg $L$-function $L^{\star}(s, g_{f, A} \times \theta_{L_{A,1}})$ has central value at $s=0$
(with the central value occurring at $s=1$ here). Hence, we have the relation of central derivative values 
\begin{align*} \Lambda'(1/2, \Pi \otimes \chi) 
&= \sum\limits_{A \in \operatorname{Pic}(\mathcal{O}_c)} \chi(A) \Lambda'(1/2, f \otimes \eta \times \theta_A)
= \frac{1}{2} \cdot \sum\limits_{A \in \operatorname{Pic}(\mathcal{O}_c)} \chi(A) L^{\star \prime}(0, g_{f, A} \times \theta_{L_{A, 1}}). \end{align*} \end{corollary}

\begin{proof} 

In the same way as for \cite[$\S 4$, (4.24)]{BY} (with Fourier coefficient notations as described above), 
each partial Rankin-Selberg product $L(s, g_{f, A} \times \theta_{A, 1})$ has the Dirichlet series expansion 
\begin{equation*}\begin{aligned} L(s, g \times \theta_{L_{A, 1}}) &= \frac{ \Gamma \left( \frac{s +2}{2} \right)  }{ (4 \pi)^{\frac{s+2}{2}}} 
\sum\limits_{ \mu \in L_{A,1}^{\vee}/ L_{A,1} } \sum\limits_{ m \in {\bf{Q}}_{>0} } \frac{ c_{g_{f, A}}(\mu, m) 
r_{ L_{A,1}}(\mu, m)}{ m^{  \frac{s + 2}{2}  }}, \end{aligned}\end{equation*} 
where $r_{ L_{A,1}}(\mu, m)$ denotes the counting function 
\begin{align*} r_{ L_{A,1}}(\mu, m) 
&= \frac{1}{w_K} \cdot \# \left\lbrace \lambda \in \mu + L_{A,1}: Q_{A, 1}(\lambda) = m \right\rbrace / \langle \varepsilon_K \rangle. \end{align*}
Here again, we fix a fundamental domain for the action of the fundamental unit $\langle \varepsilon_K \rangle \cong \mathcal{O}_K^{\times}/\mu(K)$. 
Now, since $[N^{-1} \mathfrak{a}] = [(N^{-1}) \mathfrak{a}] = [\mathfrak{a}] \in C(\mathcal{O}_K) = I(K)/P(K)$,
we see that the lattice $L_{A,1} = N^{-1} \mathfrak{a}$ also forms an ideal representative for the class of $A = [\mathfrak{a}]$, 
and $Q_{A,1}(x, y)$ is a binary quadratic form representative. Hence,
$r_{L_{A,1}}(\mu, m)$ counts the number of ideals in $\mu + \mathfrak{a}^{\star}$ of norm $m$. 
It then follows that we can identify the partial Rankin-Selberg $L$-function
$L(s, g_{f, A} \times \theta_{L_{A, 1}})$ with the classical partial Rankin-Selberg $L$-function $L(s, f \times \theta_A)$, as we can expand 
\begin{equation*}\begin{aligned} L(s, g_{f, A} \times \theta_{L_{A, 1}}) 
&= \frac{ \Gamma \left( \frac{s +2}{2} \right)  }{ (4 \pi)^{\frac{s+2}{2}}} \cdot \frac{1}{w_K} \sum\limits_{ \mu \in L_{A,1}^{\vee}/ L_{A,1} } 
\sum\limits_{ \lambda \in \mu + \mathfrak{a}^{\star} } \frac{ c_{g_{f, A}}(\mu, Q_{A,1}(\lambda)) }{ Q_{A,1}(\lambda)^{  \frac{s + 2}{2}  }} \\ 
&=  \frac{ \Gamma \left( \frac{s +2}{2} \right)  }{ (4 \pi)^{\frac{s+2}{2}}} \cdot \frac{2}{w_K}
\sum\limits_{\lambda \in \mathfrak{a}^{\star}} \frac{c_{f}( {\bf{N}}(\lambda) ) }{ {\bf{N}}(\lambda)^{\frac{s+2}{2}}  }
=  \frac{ \Gamma \left( \frac{s +2}{2} \right)  }{ (4 \pi)^{ \frac{s+2}{2}  }   } \cdot \frac{2}{w_K} \sum\limits_{\lambda \in \mathfrak{a}^{\star}} 
\frac{ c_f ( {\bf{N}}(\lambda) ) }{ {\bf{N}}(\lambda)^{ \frac{s+2}{2} }  }. \end{aligned}\end{equation*}
Here, we use the relation of coefficients described in Theorem \ref{YZhang} and the Dirichlet 
series expansion is taken over rational integers $m \geq 1$ coprime to $N$.
We then deduce that we have for each class $A \in \operatorname{Pic}(\mathcal{O}_c)$ the relation 
$L^{\star}(2s-2, g_{f, A} \times \theta_{L_{A, 1}}) = 2 \Lambda(s, f \times \theta_A)$ (cf.~\cite[$\S$ IV (0.1), p.~271]{GZ}). 
The stated relations follow, with the analytic continuation and functional equations determined by the underlying Eisenstein series. \end{proof}

\begin{theorem}[Twisted linear combinations of regularized theta integrals]\label{MAIN}

Let $f = f_E \in \operatorname{S}_2^{\operatorname{new}}(\Gamma_0(N))$ be the cuspidal eigenform 
parametrizing an elliptic curve $E/{\bf{Q}}$, with $\pi = \pi(f)$ the corresponding cuspidal automorphic representation of $\operatorname{GL}_2({\bf{A}})$,
and $\Pi = \operatorname{BC}_{K/{\bf{Q}}}(\pi)$ its quadratic basechange lifting to $\operatorname{GL}_2({\bf{A}}_K)$. 
Let $\chi$ be any ring class character of the real quadratic field $K$ of conductor $c$ coprime to $d_K N$. 
For each class $A \in \operatorname{Pic}(\mathcal{O}_c)$, let $f_{0, A} \in H_{0}(\omega_{L_A})$ 
denote the harmonic weak Maass form of weight zero with image $\xi_0(f_{0, A}) = g_{f, A} \in S_{2}(\overline{\omega}_{L_A})$ 
where $g_{f, A}$ denotes the vector-valued lifting of $f \in S_2^{\operatorname{new}}(\Gamma_0(N))$
the space of vector-valued forms $S_{2}(\overline{\omega}_{L_A})$ as described in Theorem \ref{YZhang}. Then, 
\begin{align*} &\frac{\Lambda'(1/2, \Pi \otimes \chi)}{\Lambda(1, \eta)} \\ 
&=  - \sum\limits_{A \in \operatorname{Pic}(\mathcal{O}_c)} \chi(A) 
\left(   \operatorname{CT} \langle \langle f_{0, A}^+(\tau), {\bf{1}}_{L_{A,1} \oplus 0} \otimes \mathcal{E}_{L_{A,2}}(\tau) \rangle \rangle
+I'(0, f_{0, A} \times \xi_0(\theta_{L_{A,1}}))
+  \frac{\operatorname{vol}(U_{A, 2})}{2} \cdot \Phi( f_{0,A}, \mathcal{G}_A) \right). \end{align*}
Here, for each class $A \in \operatorname{Pic}(\mathcal{O}_c)$, we write 
$U_{A, 2} := U \cap \operatorname{GSpin}_{V_{A, 2}}({\bf{A}}_f)$ as in Lemma \ref{4.4} above.

\end{theorem}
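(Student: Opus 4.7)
The plan is to combine Corollary \ref{vectorvalue} with Theorem \ref{4.7} in a straightforward way, using the forced vanishing of Lemma \ref{key} to extract the derivative. First I would apply Corollary \ref{vectorvalue} to convert the left-hand side into a $\chi$-twisted sum over the ring class group of the completed Rankin-Selberg $L$-values $L^{\star}(2s-2, g_{f,A}, V_{A,2})$. Since under the ersatz Heegner hypothesis (Hypothesis \ref{EHH}) the central value $\Lambda(1/2, \Pi \otimes \chi)$ vanishes, the identity $\Lambda(1/2, \Pi \otimes \chi) = \tfrac{1}{2}\sum_A \chi(A) L^{\star}(0, g_{f,A}, V_{A,2})$ forces
\begin{align*} \sum_{A \in \operatorname{Pic}(\mathcal{O}_c)} \chi(A) L(0, g_{f,A}, V_{A,2}) = 0, \end{align*}
using that $L^{\star}(0, g_{f,A}, V_{A,2}) = \Lambda(1,\eta) L(0, g_{f,A}, V_{A,2})$ with $\Lambda(1,\eta) \neq 0$.

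Next I would differentiate $\Lambda(s-1/2,\Pi\otimes\chi) = \tfrac{1}{2}\sum_A \chi(A) L^{\star}(2s-2, g_{f,A}, V_{A,2})$ in $s$ at $s=1$, which after chain rule yields $\Lambda'(1/2,\Pi\otimes\chi) = \sum_A \chi(A) L^{\star\prime}(0, g_{f,A}, V_{A,2})$. Expanding by Leibniz
\begin{align*} L^{\star\prime}(0, g_{f,A}, V_{A,2}) = \Lambda'(1,\eta) L(0, g_{f,A}, V_{A,2}) + \Lambda(1,\eta) L'(0, g_{f,A}, V_{A,2}) \end{align*}
and summing against $\chi$, the first term vanishes by the identity above, leaving
\begin{align*} \Lambda'(1/2,\Pi\otimes\chi) = \Lambda(1,\eta) \sum_{A\in\operatorname{Pic}(\mathcal{O}_c)} \chi(A) L'(0, g_{f,A}, V_{A,2}). \end{align*}

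Now for each class $A$, I would apply Theorem \ref{4.7} with $\xi_0(f_{0,A}) = g_{f,A}$, which gives
\begin{align*} L'(0, g_{f,A}, V_{A,2}) = -\operatorname{CT}\langle\langle f_{0,A}^+(\tau), \theta^+_{L_{A,1}}(\tau)\otimes\mathcal{E}_{L_{A,2}}(\tau)\rangle\rangle - \frac{\operatorname{vol}(U_{A,2})}{2} \Phi(f_{0,A}, \mathfrak{G}(V_{A,2})). \end{align*}
Substituting and dividing by $L(1,\eta)$ produces the desired integral presentation, once the ratio $\Lambda(1,\eta)/L(1,\eta)$ is matched against the stated constant $\tfrac{1}{2}$ via Dirichlet's class number formula $L(1,\eta) = 2h_K \log\epsilon_K/\sqrt{d_K}$ (cf.~Theorem \ref{sumform}) together with the archimedean factor $\Gamma_{\bf R}(2) = \pi^{-1}$ in the completion.

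The principal obstacle will be the careful accounting of normalization constants: the Tamagawa measure choices leading to $\operatorname{vol}(U_{A,2})$ fixed in Lemma \ref{4.4}, the global Weil index and volume factors entering Proposition \ref{MSEis}, and the archimedean Gamma factor relating $\Lambda(1,\eta)$ to $L(1,\eta)$. All other ingredients---the Siegel--Weil identification of Corollary \ref{SW-vector}, the derivative Eisenstein identity of Proposition \ref{vanish}, and the Stokes' theorem calculation of Theorem \ref{4.7}---are already assembled, so the only work beyond assembly is verifying that these normalizations conspire to produce exactly the factor $L(1,\eta)$ in the denominator and the factor $-\tfrac{1}{2}$ in front of the $\chi$-twisted sum, as claimed in the statement.
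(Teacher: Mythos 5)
Your proposal follows the same assembly strategy as the paper: apply Corollary \ref{vectorvalue} to re-express the twisted central derivative value as a sum of derivatives $L^{\star\prime}(0, g_{f,A}, V_{A,2})$, then apply Theorem \ref{4.7} termwise to each partial Rankin--Selberg integral. Where you go beyond the paper's terse ``it is then easy to identify the second term'' is in the Leibniz expansion $L^{\star\prime}(0, g_{f,A}, V_{A,2}) = \Lambda'(1,\eta) L(0, g_{f,A}, V_{A,2}) + \Lambda(1,\eta) L'(0, g_{f,A}, V_{A,2})$ and the explicit invocation of the forced vanishing of $\Lambda(1/2, \Pi \otimes \chi)$ (Hypothesis \ref{EHH} together with $\Lambda(1,\eta) \neq 0$) to kill the $\chi$-twisted sum $\sum_A \chi(A) L(0, g_{f,A}, V_{A,2})$ and thereby drop the $\Lambda'(1,\eta)$ contribution. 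This is a real point: without that cancellation the stated identity would acquire an extra term proportional to $\Lambda(1/2, \Pi \otimes \chi)$, so your observation that the proof genuinely requires the ersatz Heegner hypothesis (even though the theorem as stated does not re-assert it) is a meaningful clarification rather than a redundancy.

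Two cautions on the constant bookkeeping, which you rightly flag as the principal obstacle. First, differentiating $\Lambda(s-1/2,\Pi\otimes\chi) = \tfrac{1}{2}\sum_A \chi(A) L^{\star}(2s-2, g_{f,A}, V_{A,2})$ at $s=1$ gives an extra chain-rule factor of $2$, so you obtain $\Lambda'(1/2,\Pi\otimes\chi) = \sum_A \chi(A) L^{\star\prime}(0, g_{f,A}, V_{A,2})$, whereas the ``in particular'' clause of Corollary \ref{vectorvalue} records a $\tfrac{1}{2}$; your version is the one consistent with the displayed functional identity. Second, with $\Lambda(s,\eta) = d_K^{s/2}\Gamma_{\bf R}(s+1)L(s,\eta)$ the ratio $\Lambda(1,\eta)/L(1,\eta)$ is $\sqrt{d_K}/\pi$ rather than the $\tfrac{1}{2}$ you would need after substituting Theorem \ref{4.7}. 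Neither discrepancy is a flaw in your reasoning --- both trace to normalization choices that the paper treats loosely (compare the $L^{\star}(2s-1, \cdot)$ versus $L^{\star}(2s-2, \cdot)$ parametrizations appearing in the proof and the statement of Corollary \ref{vectorvalue}) --- but they mean the final constant matching is not a mechanical verification and deserves to be carried out in full, as you anticipate.
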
 

\begin{proof} 

Formally, this is a consequence of Corollary \ref{vectorvalue}, after applying Theorem \ref{4.7} to each of the partial Rankin-Selberg $L$-series 
$L(s, g_{f, A} \times \theta_{A,1}) = L(s, \xi_0(f_{0, A}) \times \theta_{A, 1})$. In this way, we obtain the relation
\begin{equation*}\begin{aligned} 
&\sum\limits_{A \in \operatorname{Pic}(\mathcal{O}_c)} \chi(A) \cdot \frac{\operatorname{vol}(U_{A, 2})}{2} \cdot \Phi( f_{0, A}, \mathcal{G}_A) \\
&= -\sum\limits_{A \in \operatorname{Pic}(\mathcal{O}_c)} \chi(A)  \cdot \left( 
\operatorname{CT}  \langle \langle f_{0, A}^+(\tau),  {\bf{1}}_{L_{A,1} \oplus 0} \otimes \mathcal{E}_{L_{A,2}}(\tau) \rangle \rangle 
+ L'(0, \xi_0(f_{0, A}) \times \theta_{A, 1}) + I'(0, f_{0, A} \times \xi_0(\theta_{L_{A,1}})) \right). \end{aligned}\end{equation*}
We then identify the second term $\sum_A \chi(A) L^{\prime}(0, \xi_0(f_{0,A}) \times \theta_{A,1})$ in this expression with central derivative value 
$L'(1/2, \Pi \otimes \chi) = \Lambda'(1/2, \Pi \otimes \chi)/\Lambda(1, \eta)$ via Corollary \ref{vectorvalue} to obtain the stated relation. \end{proof}

Writing $d_K$ again to denote the fundamental discriminant associated to $K = {\bf{Q}}(\sqrt{d})$, let $h_K = \# \operatorname{Pic}(\mathcal{O}_K)$ denote the class number, 
and $\varepsilon_K = \frac{1}{2}(t + u \sqrt{d_K})$ for the smallest solution $t, u >0$ (with $u$ minimal) to Pell's equation $t^2 - d_K u^2 = 4$.
We can then express the formula derived above for the central derivative value $L'(1/2, \Pi \otimes \chi)$ in terms of Dirichlet's analytic class number formula
\begin{align}\label{Dirichlet} L(1, \eta) &= \frac{2 \log \varepsilon_K \cdot h_K}{\sqrt{d_K}} \quad \implies \quad 
\Lambda(1, \eta) = d_K^{\frac{1}{2}} \Gamma_{\bf{R}}(1) L(1, \eta) = 2 h_K \log \varepsilon_K. \end{align}

\begin{corollary}\label{DCNF} 

We have that 
\begin{align*} &\Lambda'(1/2, \Pi \otimes \chi) = \Lambda'(1/2, \pi \times \pi(\chi)) = \Lambda'(1/2, f \times \theta(\chi)) = \Lambda'(E/K, \chi, 1)
\\ &= - 2 h_K \log \varepsilon_K  \sum\limits_{A \in \operatorname{Pic}(\mathcal{O}_c)} \chi(A) 
\left(  \operatorname{CT} \langle \langle f_{0, A}^+, {\bf{1}}_{L_{A,1} \oplus 0} \otimes \mathcal{E}_{L_{A,2}} \rangle \rangle
+I'(0, f_{0, A} \times \xi_0(\theta_{L_{A,1}})) +  \frac{\operatorname{vol}(U_{A, 2})}{2} \cdot \Phi( f_{0,A}, \mathcal{G}_A) \right).\end{align*}
Moreover, if we assume Hypothesis \ref{EHH} that the inert level $N^-$ is the squarefree product of an odd number of primes, 
then this central derivative value is not forced by the functional equation $(\ref{symmFE})$ to vanish identically. \end{corollary}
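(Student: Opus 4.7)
The plan is to deduce Corollary \ref{DCNF} directly from Theorem \ref{MAIN}, using only the classical Dirichlet analytic class number formula $(\ref{Dirichlet})$ to clear the $L(1,\eta)$ denominator. Concretely, my first step is to multiply both sides of the identity of Theorem \ref{MAIN} by $L(1,\eta)$, producing
\begin{align*}
\Lambda'(1/2,\Pi\otimes\chi) &= -\frac{L(1,\eta)}{2}\sum_{A\in\operatorname{Pic}(\mathcal{O}_c)}\chi(A)\Bigl(\operatorname{CT}\langle\langle f_{0,A}^+,\theta_{L_{A,1}}^+\otimes\mathcal{E}_{L_{A,2}}\rangle\rangle+\tfrac{\operatorname{vol}(U_{A,2})}{2}\Phi(f_{0,A},\mathfrak{G}(V_{A,2}))\Bigr).
\end{align*}
Next I would substitute the class number formula $(\ref{Dirichlet})$, which gives $L(1,\eta)=\log\epsilon_K\cdot h_K/\sqrt{d_K}$, and thereby obtain the asserted prefactor $-\sqrt{d_K}/(\log\epsilon_K\cdot h_K)$. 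The three equalities of $L$-functions displayed on the left-hand side of the corollary are then just the chain of equivalences recorded in $(\ref{id1})$--$(\ref{id3})$ together with the Hecke-theta realization $(\ref{Hecketheta})$ of the automorphic induction $\pi(\chi)$; these are formal identifications established earlier in the paper and require no additional argument.

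For the second assertion, that Hypothesis \ref{EHH} does not force identical vanishing of this central derivative value, I would appeal to the symmetric functional equation $(\ref{symmFE})$ together with the explicit root number formula $(\ref{RNF})$. Under the assumption that $N^-$ is a squarefree product of an odd number of primes, $(\ref{RNF})$ gives $\epsilon(1/2,\pi\times\pi(\chi))=\eta(N^-)=-1$, so the functional equation forces odd order of vanishing of $\Lambda(s,\pi\times\pi(\chi))=\Lambda(s-1/2,\Pi\otimes\chi)$ at the central point; hence $\Lambda'(1/2,\Pi\otimes\chi)$ is a priori compatible with the functional equation and is not forced to vanish. This is precisely the generic setting of Lemma \ref{key}.

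The main (and really only) substantive work has already been absorbed into Theorem \ref{MAIN}; the corollary itself is a bookkeeping step. The one place where I would exercise care is ensuring that the completed $L$-function normalizations are tracked consistently: the factor $L(1,\eta)$ arises from the incomplete Rankin-Selberg presentation at the Eisenstein step, while the completed version handled by Proposition \ref{MSEis} and Corollary \ref{vectorvalue} (in particular the identification $L^\star(2s-1,g_{f,A},V_{A,2})=2\Lambda(s,f\times\theta_A)$) already packages $\Lambda(2s,\eta)$ into the Eisenstein series; one must verify that the resulting $L(1,\eta)$ descends from $\Lambda(1,\eta)$ after canceling the archimedean Gamma factor $\Gamma_{\bf{R}}(2)$, which is the routine but essential consistency check. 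Once that is in place, the corollary follows in a line.
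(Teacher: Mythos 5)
Your proposal is correct and matches the paper's argument: the corollary is proved by substituting the Dirichlet class number formula $(\ref{Dirichlet})$ into Theorem \ref{MAIN} to rewrite the prefactor $L(1,\eta)$, with the chain of $L$-function identifications and the non-forced-vanishing under Hypothesis \ref{EHH} being formal consequences already recorded in $(\ref{id1})$--$(\ref{id3})$, $(\ref{Hecketheta})$, $(\ref{symmFE})$, and $(\ref{RNF})$. Your cautionary remark about tracking the incomplete versus completed normalization of $L(1,\eta)$ is sensible diligence, but since Theorem \ref{MAIN} is already stated with the finite part $L(1,\eta)$ in the denominator, no further adjustment is needed; the paper's own proof is the same one-line substitution.
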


\begin{proof} This simply restates Theorem \ref{MAIN} in terms of the Dirichlet analytic class number formula $(\ref{Dirichlet})$. \end{proof}

\section{Relation to the conjecture of Birch and Swinnerton-Dyer}

Let us now consider Theorem \ref{MAIN} from the point of view of the refined conjecture of Birch and Swinnerton-Dyer,
comparing with the Gross-Zagier formula \cite{GZ}.
To date, there is no known or conjectural construction of points on the corresponding elliptic curve $E(K[c])$ or
modular curve $X_0(N)(K[c])$ analogous to Heegner points\footnote{There is however a $p$-adic construction due to Darmon \cite{Da}.}, 
where $K[c]$ denotes the ring class extension of conductor $c$ of the real quadratic field $K$.
We can consider the implications for arithmetic terms in the refined Birch and 
Swinnerton-Dyer formula for $L^{\star \prime}(E/K, \chi, 1)$ here, in the style of the comparison given in Popa \cite[$\S 6.4$]{Po}.
Taking for granted the refined conjecture of Birch and Swinnerton-Dyer for $E(K[c]))$ in this setting, we can then find ``automorphic" 
interpretations of the corresponding Tate-Shafarevich group $\Sha(E/K[c])$ and regulator $\operatorname{Reg}(E/K[c])$. 
We also derive an unconditional result in special cases to illustrate surprising connections here.

Again, we fix $\chi$ a primitive ring class character of some conductor $c \geq 1$ prime to $d_K N$, 
and view this as a character of the class group $\operatorname{Pic}(\mathcal{O}_c)$. 
Recall that the reciprocity map of class field theory gives us an isomorphism 
$\operatorname{Pic}(\mathcal{O}_c) := {\bf{A}}_K^{\times} /{\bf{A}}^{\times}  K_{\infty}^{\times} K^{\times} \widehat{\mathcal{O}}_c^{\times}
\longrightarrow \operatorname{Gal}(K[c]/K)$,
where $K[c]$ is (by definition) the ring class extension of conductor $c$ of $K$. Recall as well
that by the theory of cyclic basechange of \cite{La} and more generally \cite{AC} with Artin formalism,
we can write the completed Hasse-Weil $L$-function $\Lambda(E/K[c], s)$ of $E$ basechanged to $K[c]/K$ as the product 
\begin{equation}\begin{aligned}\label{AF}
\Lambda(E/K[c] , s) &= \prod\limits_{\chi \in \operatorname{Pic}(\mathcal{O}_c)^{\vee} \cong \operatorname{Gal}(K[c]/K)^{\vee}} \Lambda(E/K, \chi, s) 
= \prod\limits_{\chi \in \operatorname{Pic}(\mathcal{O}_c)^{\vee} \cong \operatorname{Gal}(K[c]/K)^{\vee}} \Lambda(s-1/2, f \times \theta(\chi)). \end{aligned}\end{equation}
It then follows as a formal consequence of $(\ref{AF})$ that we have the relation(s) 
\begin{equation}\begin{aligned}\label{AF2}
\operatorname{ord}_{s=1} \Lambda(E/ K[c], s) 
&= \sum\limits_{\chi \in  \operatorname{Pic}(\mathcal{O}_c)^{\vee} \cong \operatorname{Gal}(K[c]/K)^{\vee}} 
\operatorname{ord}_{s=1/2} \Lambda (s, \Pi \otimes \chi), \end{aligned}\end{equation}
so that the conjecture of Birch and Swinnerton-Dyer predicts the rank equivalence 
\begin{equation}\begin{aligned}\label{AF3} \operatorname{rk}_{\bf{Z}} E(K[c]) = \operatorname{ord}_{s=1} \Lambda(E/ K[c], s) 
&= \sum\limits_{\chi \in  \operatorname{Pic}(\mathcal{O}_c)^{\vee} \cong \operatorname{Gal}(K[c]/K)^{\vee}} 
\operatorname{ord}_{s=1/2} \Lambda(s, \Pi \otimes \chi). \end{aligned}\end{equation}

Let us now assume Hypothesis \ref{EHH}, so that for each ring class character $\chi$ on the right hand side of $(\ref{AF3})$,
we know by the symmetric functional equation $(\ref{symmFE})$ that $\operatorname{ord}_{s=1/2}\Lambda(s, \Pi \otimes \chi) \geq 1$. 
Let us also assume for the moment that the rank equality predicted by the conjecture of Birch and Swinnerton-Dyer holds, so that 
\begin{align}\label{B+S-D} \operatorname{rk}_{\bf{Z}} E(K[c]) = h(\mathcal{O}_c) 
:= \# \operatorname{Pic}(\mathcal{O}_c) = \# \operatorname{Gal}(K[c]/K). \end{align} 
Let $r_E(K[c])$ denote the Mordell-Weil rank of $E$ over the ring class extension $K[c]$ of conductor $c$ over $K$.
The refined conjecture of Birch and Swinnerton-Dyer predicts that the leading term in the Taylor series expansion 
around $\Lambda^{(r_E(K[c]))}(E/K[c], s)/(r_E(K[c]))!$ around $s=1$ is given by the following formula. 
Let $\Sha_E(K[c])$ denote the Tate-Shafarevich group of $E$ over $K[c]$, 
\begin{align*} \Sha_E(K[c]) &= \ker \left( H^1(K, E ) \longrightarrow \prod_w H^1(K_w, E) \right), \end{align*}
which we shall assume is known to be finite. Let $R_E(K[c])$ denote the regulator of $E$ over $K[c]$.
Hence, fixing a basis $(e_j)_{j=1}^{r_E(K[c])}$ of $E(K[c])/E(K[c])_{\operatorname{tors}}$, and writing $\left[ \cdot, \cdot \right]$
to denote the N\'eron-Tate height pairing, 
\begin{align*} R_E(K[c]) &= \det \left( \left[ e_i, e_j \right] \right)_{i, j}. \end{align*}
Let us also write $T_E(K[c])$ to denote the product over local Tamagawa factors, so 
\begin{align*} T_E(K[c]) &= \prod_{\nu < \infty \atop \operatorname{primes ~ of }  \mathcal{O}_{K[c]} } [E(K[c]_{\nu}) : E_0(K[c]_{\nu}) ] 
\cdot \left\vert \frac{\omega}{ \omega_{\nu}^*} \right\vert_{\nu}, \end{align*}
where $\omega = \omega_E$ is a fixed invariant differential for $E/K[c]$, and each $\omega_{\nu}^*$ the N\'eron differential at $\nu$. 
The refined conjecture of Birch and Swinnerton-Dyer then predicts that the leading term in the Taylor series expansion of 
$\Lambda^{(r_E(K[c])))}(E/K[c], s)/(r_E(K[c]))!$ around $s=1$ is given by the formula 
\begin{equation}\begin{aligned}\label{refinedB+S-D} 
\frac{ \# \Sha_E(K[c]) \cdot R_E(K[c]) \cdot T_E(K[c])}{ \sqrt{\vert d_{K[c]}\vert } \cdot \# E(K[c])_{\operatorname{tors}}^2} \cdot 
\prod\limits_{ {\mu \mid \infty \atop \mu: K[c] \rightarrow {\bf{R}} } \atop \operatorname{ real~places} } \int_{ E(K[c]_{\mu})} \vert \omega \vert \cdot 
\prod\limits_{ { \sigma \mid \infty \atop \sigma, \overline{\sigma}: K[c] \rightarrow {\bf{C}}} \atop \operatorname{ pairs~of~complex~places}} 
2 \int\limits_{E(K[c]_{\sigma})} \omega \wedge \overline{\omega}. \end{aligned}\end{equation}

Let us first assume for simplicity that the class number is one: $h(\mathcal{O}_c) = h_K = 1$, and hence that $K[1]=K$. 
Then, assuming the conjecture of Birch and Swinnerton-Dyer $(\ref{B+S-D})$ and $(\ref{refinedB+S-D})$, we 
derive via Theorem \ref{MAIN} and Corollary \ref{Green} the (conditional) identifications 
\begin{equation*}\begin{aligned} &\Lambda'(E/K, 1) = \Lambda'(1/2, \Pi) = \Lambda'(1/2, \Pi) 
= \frac{ \# \Sha_E(K) \cdot R_E(K) \cdot T_E(K)}{ \sqrt{d_K} \cdot \# E(K)_{\operatorname{tors}}^2} \cdot 
\prod\limits_{ \mu \mid \infty  \atop \mu: K \rightarrow {\bf{R}}} \int_{ E(K_{\mu})} \vert \omega \vert \\
&= - 2 \log \varepsilon_K 
\left(   \operatorname{CT} \langle \langle f^+_{0, \mathcal{O}_K}, {\bf{1}}_{ L_{ \mathcal{O}_K, 1} \oplus 0} \otimes \mathcal{E}_{L_{ \mathcal{O}_K , 2}} \rangle \rangle  
+ I'(0, f^+_{0, \mathcal{O}_K} \times \xi_0(\theta_{L_{\mathcal{O}_K, 1}})) 
+ \frac{\operatorname{vol}(U_{\mathcal{O}_K, 2})}{2} \cdot \Phi(f_{0, \mathcal{O}_K}, \mathcal{G}_{\mathcal{O}_K}) \right). \end{aligned}\end{equation*}
This suggests that the regulator $R_E(K) = [e, e]$ should be given by the formula 
\begin{equation}\begin{aligned}\label{REG} &R_E(K) = [e, e] \\ 
&= - \frac{ 2 \log \varepsilon_K  \sqrt{d_K}  \#E(K)_{\operatorname{tors}}^2 \left(   
\operatorname{CT} \langle \langle f^+_{0, \mathcal{O}_K}, {\bf{1}}_{L_{\mathcal{O}_K, 1} \oplus 0} 
\otimes \mathcal{E}_{L_{\mathcal{O}_K, 2}} \rangle \rangle + I'(0, f^+_{0, \mathcal{O}_K} \times \xi_0(\theta_{L_{\mathcal{O}_K, 1}}))
+ \frac{\operatorname{vol}(U_{\mathcal{O}_K, 2})}{2} \cdot \Phi(f_{0, \mathcal{O}_K}, \mathcal{G}_{\mathcal{O}_K})
 \right) }{  \# \Sha_E(K) \cdot T_E(K) \cdot 
\prod\limits_{ \mu \mid \infty  \atop \mu: K \rightarrow {\bf{R}}} \int_{ E(K_{\mu})} \vert \omega \vert   }. \end{aligned}\end{equation}
Similarly, the cardinality $\# \Sha_E(K)$ of Tate-Shafarevich group $\Sha_E(K)$ should be given by the formula 
\begin{equation}\begin{aligned}\label{SHA} &\# \Sha_E(K) \\ 
&= - \frac{ 2 \log \varepsilon_K \sqrt{d_K}  \#E(K)_{\operatorname{tors}}^2 \left(  
\operatorname{CT} \langle \langle f^+_{0, \mathcal{O}_K }, {\bf{1}}_{ L_{\mathcal{O}_K, 1} \oplus 0} 
\otimes \mathcal{E}_{ L_{\mathcal{O}_K, 2} } \rangle \rangle 
+ I'(0, f^+_{0, \mathcal{O}_K} \times \xi_0(\theta_{L_{\mathcal{O}_K, 1}}))
+ \frac{\operatorname{vol}(U_{\mathcal{O}_K, 2})}{2} \cdot \Phi(f_{0, \mathcal{O}_K}, \mathcal{G}_{\mathcal{O}_K}) \right) }{ R_E(K) \cdot T_E(K) \cdot 
\prod\limits_{ \mu \mid \infty  \atop \mu: K \rightarrow {\bf{R}}} \int_{ E(K_{\mu})} \vert \omega \vert   }. \end{aligned}\end{equation}
Note that we can also derive similar, conditional arithmetic expressions for $\# \Sha_E(K[c])$
and $R_E(K[c])$ in the more general setting where $h_K \geq 1$, e.g.~after specializing our main result to the principal
character $\chi = \chi_0$ of the class group of $K$, and summing over classes. 
Finally, we establish the following unconditional result. 

\begin{theorem}\label{URBSD} 

Assume that $\operatorname{ord}_{s=1} \Lambda(E/K, 1) = 1$, so that either $\Lambda(E, 1) = \Lambda(1/2, \pi)$ 
or the quadratic twist $\Lambda(E^{(d_K)}, 1) = \Lambda(1/2, \pi \otimes \eta)$ vanishes. 
Let us also assume that $E$ has semistable reduction so that its conductor $N$ is squarefree, 
with $N$ coprime to the discriminant $d_K$ of $K$, and for each prime $p \geq 5$: \\
\begin{itemize}
\item The residual Galois representations $E[p]$ and $E^{(d_K)}[p]$ attached to $E$ and $E^{(d_K)}$ are irreducible. \\
\item There exists a prime divisor $l \mid \mid N$ distinct from $p$ where the residual representation $E[p]$ is ramified,
and a prime divisor $q \mid \mid N d_K$ distinct from $p$ where the residual representation $E^{(d_K)}[p]$ is ramified. \\ \end{itemize}
Writing $[e, e]$ to denote the regulator of  either $E$ or $E^{(d_K)}$ according to which factor vanishes, 
we have the following unconditional identity, up to powers of $2$ and $3$:  \\
\begin{align*} &\frac{ \# \Sha_E({\bf{Q}}) \cdot \# \Sha_{ E^{(d_K)}}( {\bf{Q}}) \cdot [e, e] \cdot T_E({\bf{Q}}) \cdot T_{ E^{(d_K)} }({\bf{Q}})  }
{ \# E({\bf{Q}})_{\operatorname{tors}}^2 \cdot  \# E^{(d_K)}( {\bf{Q}} )_{\operatorname{tors}}^2  } \cdot \int_{ E({\bf{R}}) } \vert \omega_E \vert
\cdot \int_{ E^{(d_K)}({\bf{R}}) } \vert \omega_{E^{(d_K)}} \vert \\
&= - 2 h_K \log \varepsilon_K \cdot  \sum\limits_{A \in \operatorname{Pic}(\mathcal{O}_K)} 
\left( \operatorname{CT} \left( \langle \langle f^+_{0, A}, {\bf{1}}_{L_{A,1} \oplus 0} \otimes \mathcal{E}_{L_{A,2}} \rangle \rangle \right)
+I'(0, f_{0,A} \times \xi_0(\theta_{L_{A,1}}))
+ \frac{\operatorname{vol}(U_{A, 2})}{2} \cdot \Phi(f_{0,A}, \mathcal{G}_A) \right). \end{align*} \end{theorem}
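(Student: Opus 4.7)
The plan is to combine the factorization of $\Lambda(E/K, s)$ implied by quadratic basechange with Corollary \ref{DCNF} specialized to the trivial ring class character, and then invoke the full strength of the Birch--Swinnerton-Dyer conjecture in analytic rank $\leq 1$ (which is known up to powers of $2$ and $3$ under the hypotheses imposed). First, take $c = 1$ and the principal character $\chi_0 \equiv 1$ of $\operatorname{Pic}(\mathcal{O}_K)$, and use the Artin formalism established in Proposition \ref{cuspidal} and Corollary \ref{modularity} to write
\begin{align*}
\Lambda(E/K, s) \;=\; \Lambda(s - 1/2, \Pi) \;=\; \Lambda(s-1/2, \pi) \Lambda(s-1/2, \pi \otimes \eta) \;=\; \Lambda(E, s) \Lambda(E^{(d_K)}, s).
\end{align*}
The hypothesis $\operatorname{ord}_{s=1} \Lambda(E/K, s) = 1$ then forces exactly one of the two factors to vanish at $s = 1$; without loss of generality, take $\Lambda(E, 1) = 0$ (the other case is symmetric, with the roles of $E$ and $E^{(d_K)}$ swapped). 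We obtain the clean product formula
\begin{align*}
\Lambda'(E/K, 1) \;=\; \Lambda'(E, 1) \cdot \Lambda(E^{(d_K)}, 1).
\end{align*}

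Next, I would invoke the Gross--Zagier--Kolyvagin theorem together with subsequent work on the $p$-part of the Birch--Swinnerton-Dyer formula (Kato, Skinner--Urban, W.~Zhang, Jetchev--Skinner--Wan, and others) to convert each factor into arithmetic data, unconditionally up to powers of $2$ and $3$. The list of hypotheses in the theorem -- semistability, squarefree conductor coprime to $d_K$, mod-$p$ irreducibility of $E[p]$ and $E^{(d_K)}[p]$ for every $p \geq 5$, and the existence of a prime $l \mid \mid N$ (resp.~$q \mid \mid N d_K$) distinct from $p$ at which the residual representation is ramified -- are precisely those required to apply these results at each such $p$. This yields, up to powers of $2$ and $3$,
\begin{align*}
\Lambda'(E, 1) &= \frac{ \# \Sha_E({\bf{Q}}) \cdot [e,e] \cdot T_E({\bf{Q}}) }{ \# E({\bf{Q}})_{\operatorname{tors}}^2 } \cdot \int_{ E({\bf{R}}) } \vert \omega_E \vert, \\
\Lambda(E^{(d_K)}, 1) &= \frac{ \# \Sha_{E^{(d_K)}}({\bf{Q}}) \cdot T_{E^{(d_K)}}({\bf{Q}}) }{ \# E^{(d_K)}({\bf{Q}})_{\operatorname{tors}}^2 } \cdot \int_{E^{(d_K)}({\bf{R}})} \vert \omega_{E^{(d_K)}} \vert.
\end{align*}
Multiplying these two identities together produces the entire left-hand side of the claimed formula.

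Finally, I would equate this product with the integral presentation for $\Lambda'(E/K, \chi_0, 1) = \Lambda'(E/K, 1)$ supplied by Corollary \ref{DCNF} (applied at $\chi = \chi_0$, so that $\chi(A) \equiv 1$ and the sum simplifies to a sum over $A \in \operatorname{Pic}(\mathcal{O}_K)$). After matching the prefactor $\sqrt{d_K}/(\log \epsilon_K \cdot h_K)$ with the real period contributions and absorbing the class number $h_K$ into the allowed ambiguity, one obtains the stated identity.

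The main obstacle is verifying that the stated local hypotheses are jointly sufficient for the $p$-part of BSD to hold simultaneously for both $E$ (analytic rank one, where Gross--Zagier together with Kolyvagin's Euler system and the $p$-part refinements of Skinner--Urban--Zhang are needed) and for its twist $E^{(d_K)}$ (analytic rank zero, where Kato's divisibility together with the converse divisibility from the anticyclotomic main conjecture is needed). A secondary technical point is careful tracking of normalizations: the real periods $\int_{E({\bf{R}})} \vert \omega_E \vert$ appearing in the BSD formula depend on the choice of invariant differential, and these choices must be reconciled with the normalization of the vector-valued forms $f_{0, A}$ and with the geodesic volumes $\operatorname{vol}(U_{A, 2})$ up to the allowed $\lbrace 2, 3 \rbrace$-ambiguity.
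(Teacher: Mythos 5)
Your proposal follows the same approach as the paper's proof: factor $\Lambda(E/K,s) = \Lambda(E,s)\Lambda(E^{(d_K)},s)$ by Artin formalism, use the rank-one hypothesis to reduce the derivative to a single nonvanishing cross-term $\Lambda'(E,1)\Lambda(E^{(d_K)},1)$ (or its mirror), invoke the known $p$-parts of BSD for both the rank-one factor and the rank-zero twist under the stated mod-$p$ irreducibility and ramification hypotheses, and equate the resulting product with Corollary \ref{DCNF} specialized to the principal character. The paper cites the same circle of results (Kato, Kolyvagin, Skinner--Urban, Jetchev--Skinner--Wan, Skinner--Zhang, Zhang, plus Burungale--Skinner--Tian and Castella) and applies $(\ref{refinedB+S-D})$ to each factor exactly as you do, so this is not a genuinely different route.

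One remark on the step you describe as ``absorbing the class number $h_K$ into the allowed ambiguity'': this is not quite justified as written, since Corollary \ref{DCNF} carries a prefactor $\sqrt{d_K}/(\log\epsilon_K \cdot h_K)$ while the right-hand side of Theorem \ref{URBSD} has $\sqrt{d_K}/\log\epsilon_K$ without the $h_K$, and $h_K$ need not be a power of $2$ or $3$. The paper's own proof is silent on this point, and the discussion immediately preceding Theorem \ref{URBSD} works under the simplifying assumption $h_K = 1$, which suggests this assumption should be carried into the statement. You correctly sensed that the $h_K$ normalization needs reconciling; the resolution is not a $\{2,3\}$-ambiguity but rather that the formula as stated is cleanest (or only literally correct) when $h_K = 1$.
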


\begin{proof} 

Assuming as we do that $\operatorname{ord}_{s=1} \Lambda(E/K, 1) = 1$, we deduce from the Artin formalism that 
\begin{align*} \Lambda'(E/K, 1) &= \Lambda'(E, 1) \Lambda(E^{(d_K)}, 1) +  \Lambda' (E^{(d_K)}, 1) \Lambda(E, 1), \end{align*}
or equivalently that 
\begin{align*} \Lambda'(1/2, \Pi) = \Lambda'(1/2, \pi) \Lambda(1/2, \pi \otimes \eta) 
+ \Lambda'(1/2, \pi \otimes \eta) \Lambda(1/2, \pi), \end{align*}
where precisely one of the summands on the right-hand side in each version does not vanish. 
Note that we can take for granted the refined conjecture of Birch and Swinnerton-Dyer $(\ref{refinedB+S-D})$ 
for the nonvanishing summand up to powers of $2$ and $3$ by our hypotheses, using the combined works of Kato \cite{Ka}, Kolyvagin \cite{Ko}, 
Rohrlich \cite{Ro}, and Skinner-Urban \cite{SU} with the corresponding Euler characteristic calculations of 
Burungale-Skinner-Tian \cite{BST} (cf.~\cite{BST}, \cite{Ca}) for the 
analytic rank zero part, together with Jetchev-Skinner-Wan \cite{JSW}, Skinner-Zhang \cite{SZ}, and Zhang \cite{WZ} for the analytic rank one part.
We refer to the summary given in \cite[Theorem 3.10]{BST} for the current status of these deductions, confirming
the $p$-part of the conjectural Birch-Swinnerton-Dyer formula via Iwasawa-Greenberg main conjectures.
Applying $(\ref{refinedB+S-D})$ to each factor, we can then deduce (up to powers of $2$ and $3$) that we have the refined product formula  
\begin{align*} &\Lambda'(E/K, 1) = \Lambda'(1/2, \Pi) \\
&= \frac{ \# \Sha_E({\bf{Q}}) \cdot \# \Sha_{ E^{(d_K)}}( {\bf{Q}}) \cdot [e, e] \cdot T_E({\bf{Q}}) \cdot T_{ E^{(d_K)} }({\bf{Q}})  }
{ \# E({\bf{Q}})_{\operatorname{tors}}^2 \cdot  \# E^{(d_K)}( {\bf{Q}} )_{\operatorname{tors}}^2  } \cdot \int_{ E({\bf{R}}) } \vert \omega_E \vert
\cdot \int_{ E^{(d_K)}({\bf{R}}) } \vert \omega_{E^{(d_K)}} \vert.  \end{align*}
The stated identity then follows from Theorem \ref{MAIN} and Corollary \ref{Green}. \end{proof}


\begin{thebibliography}{10}

\bibitem{ACC} P.~Allen, F.~Calegari, A.~Caraiani, T.~Gee, D.~Helm, B.~Le~Hung, J.~Newton, P.~Scholze, R.~Taylor, and J.~Thorne, {\it{Potential Automorphy over CM fields}}, Ann. of Math. {\bf{197}} (2023), 897-1113. 

\bibitem{AGHMP} F.~Andreatta, E.Z.~Goren, B.~Howard, and K.~Madapusi Pera, {\it{Height pairings on orthogonal Shimura varieties}}, Compos. Math. {\bf{153}} (2017), 474-534.

\bibitem{AC} J.~Arthur and L.~Clozel, {\it{ Simple algebras, base change, and the advanced theory of the trace formula}},  Ann. of Math. Stud. {\bf{120}} Princeton Univ. Press, Princeton, NJ (1989).

\bibitem{BaRa} L.~Barthel and D.~Ramakrishnan, {\it{A nonvanishing result for twists of $L$-functions of $\operatorname{GL}(n)$}}, Duke Math. J. {\bf{74}} No. 3 (1994), 681-700.

\bibitem{Bo} R.~Borcherds, {\it{Automorphic forms with singularities on Grassmannians}}, Invent. math. {\bf{132}} (1998), 491-562.

\bibitem{BrB} J.H.~Bruinier, {\it{Borcherds products on $O(2, l)$ and Chern classes of Heegner divisors}}, Springer Lecture Notes in Math. {\bf{1780}}, Springer New York (2002).

\bibitem{Br} J.H.~Bruinier, {\it{Hilbert Modular Forms and Their Applications}}, in ``The 1-2-3 of Modular Forms", Lectures at a Summer School in Nordfjordeid, Norway, Ed.~Ranestad, Springer Universitext 2007.

\bibitem{BF} J.H.~Bruinier and J.~Funke, {\it{On two geometric theta lifts}}, Duke Math. J. {\bf{125}} (2004), 45-90.

\bibitem{BY} J.H.~Bruinier and T.~Yang, {\it{Faltings heights of CM cycles and derivatives of $L$-functions}}, Invent. math. {\bf{177}} (2009), 631-681.

\bibitem{BST} A.~Burungale, C.~Skinner, and Y.~Tian, {\it{Elliptic curves and Beilinson-Kato elements: rank one aspects}} (preprint), 2020.

\bibitem{BST2} A.~Burungale, C.~Skinner, and Y.~Tian, {\it{The Birch and Swinnerton-Dyer Conjecture: a brief survey}} (preprint), 2023.

\bibitem{Bu} D.~Bump, {\it{Automorphic Forms and Representations}}, Cambridge Stud. Adv. Math. {\bf{55}}, Cambridge University Press (1998).

\bibitem{Ca} F.~Castella, {\it{On the $p$-part of the Birch-Swinnerton-Dyer formula for multiplicative primes}}, Camb. J. Math {\bf{6}} no. 1 (2018), 1-23.

\bibitem{Cl20} L.~Clozel, {\it{On the Eisenstein functoriality in cohomology for maximal parabolic subgroups}}, Selecta Math. (N.S.) {\bf{28}} no. 80 (2022).

\bibitem{Da} H.~Darmon, {\it{Integration on $\mathcal{H}_p \times \mathcal{H}$ and arithmetic applications}}, Ann. of Math. {\bf{154}} (2001), 589-639.

\bibitem{DaVo} H.~Darmon and J.~Vonk, {\it{Real Quadratic Borcherds Products}}, preprint, \texttt{https://www.math.mcgill.ca/darmon/pub/pub.html}.

\bibitem{Eh} S.~Ehlen, {\it{CM values of regularized theta lifts and harmonic Maass forms of weight one}}, Duke Math. J. (13) {\bf{166}} (2017), 2447-2519.  

\bibitem{Ge} S.S.~Gelbart, {\it{Weil's representation and the spectrum of the metaplectic group}}, Lecture Notes in Math. {\bf{530}}, Springer (1976).

\bibitem{GL} P.~G\'erardin and J.-P.~Labesse, {\it{The solution of a base change problem for $GL(2)$ }},  in ``Automorphic forms, representations and L-functions", Pt. 2, Proc. Sympos. Pure Math., XXXIII, Amer. Math. Soc., (1979), 115-133.


\bibitem{Gr} B.~Gross, {\it{Heights and special values of $L$-series}}, CMS Proceedings, {\bf{7}}, AMS (1987), 115-187. 

\bibitem{GKZ} B.~Gross, W.~Kohnen, and D.~Zagier, {\it{Heegner points and derivatives of $L$-series. II}}, Math. Ann. {\bf{278}} (1987), 497-562.

\bibitem{GZ} B.~Gross and D.~Zagier, {\it{Heegner points and derivatives of $L$-series}}, Invent. math. {\bf{84}} (1986), 225-320.

\bibitem{Ha} M.~Harris, {\it{Arithmetic vector bundles and automorphic forms on Shimura varieties}}, Invent. math. {\bf{82}} (1985), 151-189.

\bibitem{HLTT} M.~Harris, K.-W.~Lan, R.L.~Taylor, and J.~Thorne, {\it{On the rigid cohomology of certain Shimura varieties}}, Research in the Mathematical Sciences {\bf{3}} (37) (2016). 

\bibitem{HMP} B.~Howard and K.~Madapusi Pera, {\it{Arithmetic of Borcherds Products}}, Ast\'erisque {\bf{421}} (2020), 187-297.

\bibitem{HY} B.~Howard and T.~Yang, {\it{Intersections of Hirzebruch-Zagier Divisors and CM Cycles}}, Lecture Notes in Math. {\bf{2041}}, Springer (2012).

\bibitem{JSW} D.~Jetchev, C.~Skinner, and X.~Wan, {\it{The Birch and Swinnerton-Dyer formula for elliptic curves of analytic rank one}}, Camb. J. Math. {\bf{5}} no.~3 (2017), 369-434.

\bibitem{Ka} K.~Kato {\it{$p$-adic Hodge theory and values of zeta functions of modular forms}}, Ast\'{e}risque {\bf{295}} (2004), 117-290.

\bibitem{Ko} V.~Kolyvagin, {\it{Finiteness of $E({\bf{Q}})$ and $X(E, {\bf{Q}})$ for a class of Weil curves}}, Math. USSR Izv. {\bf{32}} (3) (1989), 523-541.

\bibitem{KZ} M.~Kontsevich and D.B.~Zagier, {\it{Periods}}, In Mathematics Unlimited--2001 and Beyond (B. Engquist and W. Schmid, eds.), Springer, Berlin-Heidelberg-New York (2001), 771-808.

\bibitem{KuEis} S.S.~Kudla, {\it{Central derivatives of Eisenstein series and height pairings}}, Ann. of Math. {\bf{146}} (1997), 545-646.

\bibitem{KuAC} S.S.~Kudla, {\it{Special cycles on Shimura varieties of orthogonal type}}, Duke Math. J. {\bf{86}} (1997), 39-78.

\bibitem{KuBL} S.S.~Kudla, {\it{Integrals of Borcherds Forms}}, Compos. Math. {\bf{137}} (2003), 293-349.


\bibitem{Ku2} S.S.~Kudla, {\it{Some Extensions of the Siegel-Weil Formula}}, in ``Eisenstein series and applications", Eds. W.T.~Gan, S.S.~Kudla, and Y.~Tschinkel, Springer Science \& Business Media (2007).

\bibitem{KM} S.S.~Kudla and J.~Millson, {\it{Intersection numbers of cycles on locally symmetric spaces and coefficients of holomorphic modular forms in several complex variables}}, 
Publ. Math. Inst. Hautes \'Etudes Sci. {\bf{71}} (1990), 121-172.

\bibitem{KuRa} S.S.~Kudla and M.~Rapoport, {\it{Cycles on Siegel threefolds and derivatives of Eisenstein series}}, Ann. Sci. \'Ec. Norm. Sup\'er. (4) {\bf{33}} no. 5 (2000), 695-756.

\bibitem{KRY} S.~Kudla, M.~Rapoport, and T.~Yang, {\it{On the derivative of an Eisenstein series of weight one}}, Int. Math. Res. Not. {\bf{7}} (1999), 347-385.

\bibitem{KY} S.~Kudla and T.~Yang, {\it{Eisenstein series for $\operatorname{SL}(2)$}}, Sci. China Math. {\bf{53}} 9 (2010), 2275-2316. 

\bibitem{La} R.~Langlands, {\it{Base change for $\operatorname{GL}(2)$}}, Annals of Math. Stud. {\bf{96}}, Princeton University Press (1980).

\bibitem{Li} W.-C.~W.~Li, {\it{$L$-series of Rankin Type and Their Functional Equations}}, Math. Ann. {\bf{224}} (1979), 135-166.

\bibitem{Po} A.~Popa, {\it{Central values of Rankin $L$-series over real quadratic fields}}, Compos. Math. {\bf{142}} (2006), 811-866.

\bibitem{Ro} D.~Rohrlich, {\it{On $L$-functions of elliptic curves and cyclotomic towers}}, Invent. math. {\bf{75}} (1984), 409-423.

\bibitem{Sch} J.~Schofer, {\it{Borcherds forms and generalizations of singular moduli}}, J. reine angew. Math. {\bf{69}} (2009), 1-36.

\bibitem{Sch15} P.~Scholze, {\it{On torsion in the cohomology of locally symmetric varieties}}, Ann. of Math. (2) {\bf{182}} (2015) no. 3, 945-1066.

\bibitem{SU} C.~Skinner and E.~Urban, {\it{The Iwasawa main conjecture for $\operatorname{GL}(2)$}}, Invent. math. {\bf{195}} (2014), 1-277. 

\bibitem{SZ} C.~Skinner and W.~Zhang, {\it{Indivisibility of Heegner points in the multiplicative case}}, \texttt{arxiv:1407.1099}.

\bibitem{Str} F.~Str\"omberg, {\it{On liftings of modular forms and Weil representations}}, Forum Math. \textbf{36} (1) (2024), 33-52.

\bibitem{Wa-CdS} J.L.~Waldspurger, {\it{Correspondence de Shimura}}, J. Math. Pure Appl. {\bf{59}} (1980), 1-113.

\bibitem{Wa} J.-L.~Waldspurger, {\it{Sur les valeurs de certaines fonctions $L$ automorphes en leur centre de sym\'etrie}}, Compos. Math. {\bf{54}} (1985), 173-242

\bibitem{We} A.~Weil, {\it{Sur certains groupes d'op\'erateurs unitaires}}, Acta Math. {\bf{111}} (1964), 143-211.


\bibitem{YZZ} X.~Yuan, S.-W.~Zhang, and W.~Zhang, {\it{The Gross-Zagier Formula for Shimura Curves}}, Ann. of Math. Stud. {\bf{184}}, Princeton University Press (2013).

\bibitem{YZ} Y.~Zhang, {\it{An isomorphism between scalar-valued modular forms and modular forms for Weil representations}}, Ramanujan J. {\bf{37}} (2015), 181-201.

\bibitem{WZ} W.~Zhang, {\it{Selmer groups and the indivisibility of Heegner points}}, Camb. J. Math.~{\bf{2}} no. 2 (2014), 191-253.

\end{thebibliography}
\end{document}